\documentclass[a4paper,11pt,twoside,notitlepage]{amsart}

\usepackage{amsmath,amssymb,amsthm,mathtools,mathrsfs}
\usepackage[shortlabels]{enumitem}
\usepackage[ocgcolorlinks,linkcolor=blue,citecolor=blue,urlcolor=blue]{hyperref}
\usepackage{url}
\numberwithin{equation}{section}
\mathtoolsset{showonlyrefs}
\allowdisplaybreaks

\newtheorem{theorem}{Theorem}[section]
\newtheorem{proposition}[theorem]{Proposition}
\newtheorem{lemma}[theorem]{Lemma}
\newtheorem{corollary}[theorem]{Corollary}
\newtheorem{definition}[theorem]{Definition}
\newtheorem{remark}[theorem]{Remark}
\newtheorem{question}{Question}[section]

\DeclareMathOperator{\diam}{diam}
\DeclareMathOperator{\dist}{dist}
\DeclareMathOperator{\supp}{supp}

\DeclareMathOperator{\esssup}{ess\,sup}

\newcommand{\R}{\mathbb R}

\newcommand{\Sph}{\mathbb S^2}
\newcommand{\p}{\partial}
\newcommand{\eps}{\varepsilon}
\newcommand{\ol}{\overline}

\newcommand{\Q}{\mathcal Q}
\newcommand{\B}{\mathcal B}
\newcommand{\A}{\mathcal A}
\newcommand{\G}{\mathcal G}

\newcommand{\C}{\mathscr C}
\newcommand{\para}[1]{\vspace{3mm}\noindent\textbf{#1.}}

\title[Tomography of reactive Boltzmann kernels]{Tomography of reactive Boltzmann kernels from product fluxes}

\author[Y.-H. Lin]{Yi-Hsuan Lin}
\address{Department of Applied Mathematics, National Yang Ming Chiao Tung University, Hsinchu, Taiwan \& Fakult\"at f\"ur Mathematik, University of Duisburg-Essen, Essen, Germany}
\email{yihsuanlin3@gmail.com}

\author{Hongyu Liu}
\address{Department of Mathematics, City University of Hong Kong, Kowloon, Hong Kong, China}
\email{hongyu.liuip@gmail.com, hongyliu@cityu.edu.hk}

\keywords{Inverse kinetic problems, reactive Boltzmann equation, collision kernels, boundary measurements, high-velocity asymptotics, X-ray transform}
\subjclass[2020]{35R30, 35Q20, 82C40, 44A12}

\begin{document}
	
\begin{abstract}
	We study the reversible reaction $1+2\rightleftarrows3+4$ in a dilute gas described by a Boltzmann system. We prove that the outgoing flux of one product species determines the forward collision kernel throughout the domain on a prescribed admissible set of relative velocities and outgoing directions. The elastic collision laws are known, and the reverse kernel is fixed by a prescribed reciprocity relation. The measurements use two synchronized incoming packets with fixed relative velocity and a common velocity tending to infinity. Under bounded angularly integrated collision rates, the short transit time allows us to isolate the contribution of one forward reaction, with estimates uniform as the packets concentrate. Suitable tests of the product flux yield the spatial line integrals of the kernel and a Fourier reconstruction, without differentiating the nonlinear boundary map. We also estimate the effects of finite beam speed, packet width, detector resolution, and measurement noise on the recovered line integrals.
\end{abstract}
	
	\maketitle
	\tableofcontents
	
	\section{Introduction}
	
	We study an inverse problem for the reversible reaction $1+2\rightleftarrows3+4$ in a dilute gas. The question is whether the outgoing flux of one product species determines the forward collision kernel $K(x,g,\omega)$, where $x$ is the collision point, $g$ is the incoming relative velocity, and $\omega\in\Sph$ is the direction of the outgoing relative velocity. For a spatially homogeneous gas, let $\sigma_{12\to34}(g,\cdot)$ denote the cross section measure between prescribed internal states, and suppose that it has density $b_{12\to34}(g,\omega)$ with respect to $d\omega$. We use the convention $K(g,\omega)=|g|b_{12\to34}(g,\omega)$.
	
	Measurements of product velocities are available in experiments using crossed beams and velocity map imaging. Merged beams also allow small relative velocities even when both beams have nonzero laboratory speeds; see \cite{MichaelsenEtAl2017,VogelsEtAl2014,WeiLyuksyutovHerschbach2012}. In this paper, we use two incoming packets centered at $Re+w_1$ and $Re+w_2$, where $e\in\Sph$ and $w_1,w_2$ are fixed. The common velocity translation leaves the relative velocities and reaction energy unchanged. As $R\to\infty$, the time spent by the particles in the domain is of order $R^{-1}$.
	
	We prescribe zero initial data for all four species and inject only the first two. The third species has no contribution from free transport, and its leading outgoing signal comes from a single forward reaction between the two incoming populations. Terms involving further collisions are of smaller order as $R\to\infty$. We use measurements of the third species throughout the paper. The fourth species gives the same information with a corresponding change of the velocity filter; see Remark~\ref{rem:detected-product-channel}. The incoming data are prescribed at the boundary, where we also measure the product flux as a function of exit time, position, and velocity. The measurements used in the proof are integrals of this flux against prescribed detector responses.
	
	The dependence on $x$ allows the reaction law to vary within the domain. For example, one may consider $K(x,g,\omega)=K_{\rm bg}(g,\omega)+q(x,g,\omega)$, where $K_{\rm bg}$ is a known background kernel. The transport equation contains no force term, so particles move along straight lines between collisions. If the spatial variation of the reaction law is produced by an external control, its effect on these trajectories must be negligible during the observation interval. Inverse transport problems with an external force are studied in \cite{LaiZhou2021ExternalForce}.
	
	\para{The model and boundary measurements} Let $\Omega\subset\R^3$ be a bounded $C^3$ strictly convex domain. We consider four nonnegative distribution functions $F_i=F_i(t,x,v)$, $1\le i\le4$, and write $F=(F_1,F_2,F_3,F_4)$. The distributions satisfy the reactive Boltzmann system
	\begin{equation}\label{eq:intro-system}
		\begin{aligned}
			\p_tF_i+v\cdot\nabla_xF_i&=\Q_{K,i}(F),\\
			\Q_{K,i}(F)&:=Q_i^{\rm el}(F)+Q_i^{\rm r}(F), &&1\le i\le4.
		\end{aligned}
	\end{equation}
	Here $Q_i^{\rm el}$ is the known elastic collision operator and $Q_i^{\rm r}$ is the reactive operator. The forward kernel is unknown, while the reverse kernel is specified by the reciprocity condition in Definition~\ref{def:microreversible-kernel}. The reaction maps and the full gain--loss operators are defined in Section~\ref{sec:model}. All elastic and reactive kernels satisfy the bounds on the collision rates in \eqref{eq:kernel-finite-rate-class} and \eqref{eq:Lambda-el}. We write $\Q_K=(\Q_{K,1},\ldots,\Q_{K,4})$. An operator with one argument acts on the full distribution vector. Two scalar arguments, as in $Q_{3,+}^{K}(f_1,f_2)$, specify the bilinear gain from one collision channel.
	
	The coefficient to be recovered is
	\begin{equation}\label{eq:intro-unknown}
		K=K(x,g,\omega), \quad (x,g,\omega)\in\Omega\times\R^3\times\Sph.
	\end{equation}
	The variable $g$ is the incoming relative velocity of the first two components, and $\omega$ is the direction of the outgoing relative velocity. Formula \eqref{eq:third-gain-weak} specifies how $K$ enters the gain term for the third species, including its normalization. Whenever $K(y+\sigma e,g,\omega)$ is integrated over $\sigma\in\R$, we set $K(x,g,\omega)=0$ for $x\in\R^3\setminus\ol\Omega$. This convention concerns only the spatial variable, and the values on $\p\Omega$ do not affect the line integrals.
	
	Let $n(x)$ be the outward unit normal to $\p\Omega$, and set
	\begin{equation}\label{eq:intro-gamma}
		\Gamma_\pm=\{(x,v)\in\p\Omega\times\R^3:\ \pm n(x)\cdot v>0\}.
	\end{equation}
	Fix $T>\diam(\Omega)$. The temporal profiles used below are chosen so that all relevant scaled entry and exit times lie compactly in $(0,T)$. For $R>1$, let $F^R$ solve \eqref{eq:intro-system} on $0<t<T/R$, with zero initial data and incoming boundary values
	\begin{equation}\label{eq:intro-initial-boundary}
		\begin{aligned}
			F_1^R\big|_{(0,T/R)\times\Gamma_-}&=G_{1,R},
			&F_2^R\big|_{(0,T/R)\times\Gamma_-}&=G_{2,R},\\
			F_3^R\big|_{(0,T/R)\times\Gamma_-}&=0,
			&F_4^R\big|_{(0,T/R)\times\Gamma_-}&=0,
		\end{aligned}
	\end{equation}
	and $F_i^R(0,x,v)=0$ for $1\le i\le4$. Thus, only the first two components are injected.
	
	The measured quantity is the outgoing particle flux of the third component,
	\begin{equation}\label{eq:intro-product-flux}
		J_{3,R}(t,x,v):=(n(x)\cdot v)_+\,\gamma_+F_3^R(t,x,v), \quad (t,x,v)\in(0,T/R)\times\Gamma_+,
	\end{equation}
	where $(a)_+=\max\{a,0\}$ and $\gamma_+F_3^R$ is the outgoing characteristic representative defined in Section~\ref{sec:scaled-forward}. We define the boundary response by
	\begin{equation}\label{eq:intro-boundary-map}
		\A_{K,R}(G_{1,R},G_{2,R}):=J_{3,R}.
	\end{equation}
	The response $\A_{K,R}$ records the product flux as a function of exit time $t$, boundary position $x$, and velocity $v$. It uses no interior measurements of $F^R$.
	
	A physical detector does not need to evaluate $J_{3,R}$ at one point. If $\Psi$ is a prescribed compactly supported $C^1$ detector response on $(0,T/R)\times\Gamma_+$, its reading is
	\begin{equation}\label{eq:intro-general-detector-reading}
		\big\langle\A_{K,R}(G_{1,R},G_{2,R}),\Psi\big\rangle
		:=\int_0^{T/R}\int_{\Gamma_+}J_{3,R}(t,x,v)\Psi(t,x,v)\,dS_x\,dv\,dt.
	\end{equation}
	The response $\Psi$ specifies the time gate, boundary aperture, and detected velocity range. For $\Psi\ge0$, this is a weighted product count. Signed responses are obtained by subtracting two nonnegative readings. The inverse argument uses only these integrated flux measurements.
	
	\para{The incoming data and detector responses} We now describe the incident pulses and detector readings used in the theorem. Let $e\in\Sph$ be the common propagation direction, let $R=\eps^{-1}$, and fix an accessible incoming relative velocity $g_0$. The first two incoming distributions are synchronized in time, with velocities concentrated within $O(\delta)$ of $Re+w_1$ and $Re+w_2$, respectively, where $w_1-w_2=g_0$ and $m_1w_1+m_2w_2=0$. Definition~\ref{def:beam-data} specifies the velocity profiles and the incoming data in scaled boundary coordinates. The corresponding physical data $G_{i,R}^{\eps,\delta}$, $i=1,2$, are defined in \eqref{eq:physical-beam-data-definition}.
	
	Let $P_{e^\perp}$ be the orthogonal projection onto $e^\perp$ and put $\Omega_e=P_{e^\perp}(\Omega)$. A point $y\in\Omega_e$ parametrizes the chord $(y+\R e)\cap\Omega$, whose length is denoted by $L(y,e)$. A function $\eta\in C_c^\infty(\Omega_e)$ selects a compact family of these chords. We use the scaled time $s=Rt$ and denote the center of the incident pulses by $s_0$. The detector measures particles exiting near the expected time $s_0+L(y,e)$. For fixed $g_0$, the limiting velocities of the third species, after subtracting $Re$, lie on a known sphere. Given a function $\varphi$ of the outgoing direction, we choose a smooth velocity filter $\chi_{g_0,\varphi}$ on $\R^3$ whose restriction to this sphere agrees with $\varphi$ under the parametrization by outgoing directions. The time response, boundary coordinates, and velocity filter are defined in Sections~\ref{sec:scaled-forward} and \ref{sec:measurements}.
	
	Let $\Psi_{\eps}^{e,g_0;\eta,\varphi}$ denote the resulting physical detector response on $(0,T/R)\times\Gamma_+$. The restricted scalar measurement used below is
	\begin{equation}\label{eq:intro-measurement-symbol}
		\begin{split}
			\mathscr M_K^{\eps,\delta}(e,g_0;\eta,\varphi)
			:=R\big\langle\A_{K,R}(G_{1,R}^{\eps,\delta},G_{2,R}^{\eps,\delta}),\Psi_{\eps}^{e,g_0;\eta,\varphi}\big\rangle, \quad R=\eps^{-1}.
		\end{split}
	\end{equation}
	The product count from a single reaction is of order $R^{-1}$, so multiplication by $R$ gives its leading coefficient in the limit. The scalar measurement $\mathscr M_K^{\eps,\delta}$ is obtained from $\A_{K,R}$ by this normalization. Its expression in scaled coordinates is given in \eqref{eq:physical-scaled-measurement}.
	
	\begin{question}[Recovery of the collision kernel]\label{Q:inverse}
		Does the family of restricted detector readings \eqref{eq:intro-measurement-symbol}, obtained for all accessible propagation directions, relative velocities, spatial apertures, and product angular filters, determine the interior kernel $K(x,g,\omega)$?
	\end{question}
	
	\para{Main result} Let $\mathscr G\Subset\R^3$ be a bounded open set of relative velocities whose closure lies in the region where the forward reaction is energetically admissible, and let $\mathcal O\Subset\Sph$ be an open set of outgoing directions. The reaction threshold $g_{\rm th}$ is introduced in Subsection~\ref{subsec:kinematics}. The larger set $\mathscr G^\sharp$ of relative velocities and the admissible class $\mathfrak K=\mathfrak K(\mathscr G^\sharp,M_0,M_1)$ of forward kernels are defined in \eqref{eq:collision-windows} and \eqref{eq:admissible-kernel-class}. Throughout the paper, $K\in\mathfrak K$ denotes the forward kernel $K=K^+$; the reverse kernel $K^-$ is fixed by the prescribed reciprocity relation. For $U\Subset\Omega_e$, we take $\eta\in C_c^\infty(U)$ and $\varphi\in C_c^\infty(\mathcal O)$.
	
	The proof is based on the identity
	\begin{equation}\label{eq:intro-tested-xray}
		\begin{aligned}
			\lim_{\delta\to0}\lim_{\eps\to0}\mathscr M_K^{\eps,\delta}(e,g_0;\eta,\varphi)
			&=\int_U\eta(y)\int_{\Sph}\varphi(\omega)\bigg[\int_\R K(y+\sigma e,g_0,\omega)\,d\sigma\bigg]d\omega\,dy.
		\end{aligned}
	\end{equation}
	By varying $\eta$ and $\varphi$, we recover the Euclidean X-ray transform of $K(\cdot,g_0,\omega)$ for every accessible collision configuration.
	
	\begin{theorem}[Global recovery and reconstruction]\label{thm:main}
		Let $\Omega\subset\R^3$ be a bounded $C^3$ strictly convex domain. Let $K_1,K_2\in\mathfrak K$, with $\mathfrak K$ defined by \eqref{eq:admissible-kernel-class}, be two forward reactive kernels with the same masses and internal energies satisfying the assumptions of Subsection~\ref{subsec:kinematics}. Assume that the known elastic kernels are nonnegative and measurable, satisfy the interchange symmetry in Subsection~\ref{subsec:collision-operators}, and obey \eqref{eq:Lambda-el}. Use the same elastic kernels for both forward problems. For $j=1,2$, let $\mathscr M_{K_j}^{\eps,\delta}$ denote the normalized product measurements in \eqref{eq:intro-measurement-symbol}, constructed using the same incident packets and detector profiles.
		
		Assume that, for every $e\in\Sph$, every $g_0\in\mathscr G$, every open set $U\Subset\Omega_e$, every $\eta\in C_c^\infty(U)$, and every $\varphi\in C_c^\infty(\mathcal O)$, there exist $\eps_0,\delta_0>0$ such that
		\begin{equation}\label{eq:intro-measurement-equality}
			\mathscr M_{K_1}^{\eps,\delta}(e,g_0;\eta,\varphi)=\mathscr M_{K_2}^{\eps,\delta}(e,g_0;\eta,\varphi)
		\end{equation}
		whenever $0<\eps<\eps_0$ and $0<\delta<\delta_0$. The constants $\eps_0$ and $\delta_0$ may depend on the chosen measurement configuration. Then
		\begin{equation}\label{eq:intro-main-conclusion}
			K_1(x,g,\omega)=K_2(x,g,\omega)
		\end{equation}
		for every $(x,g,\omega)\in\Omega\times\mathscr G\times\mathcal O$.
		
		For a single kernel $K\in\mathfrak K$, let $\widetilde K_{g,\omega}$ be the zero extension of $K(\cdot,g,\omega)$ to $\R^3$. For every $e\in\Sph$, $g\in\mathscr G$, $\omega\in\mathcal O$, and $y\in e^\perp$, the same measurements determine the spatial X-ray transform
		\begin{equation}\label{eq:intro-line-data}
			X_xK(y,e;g,\omega)=\int_\R \widetilde K_{g,\omega}(y+\sigma e)\,d\sigma.
		\end{equation}
		
		For $\xi\ne0$, choose any $e_\xi\in\Sph$ satisfying $e_\xi\cdot\xi=0$. Then
		\begin{equation}\label{eq:intro-fourier-reconstruction}
			\widehat{\widetilde K}_{g,\omega}(\xi)=\int_{e_\xi^\perp}e^{-\mathsf i y\cdot\xi}X_xK(y,e_\xi;g,\omega)\,dy,
		\end{equation}
		where $\mathsf i=\sqrt{-1}$ and $\widehat{\widetilde K}_{g,\omega}(\xi)=\int_{\R^3}e^{-\mathsf i x\cdot\xi}\widetilde K_{g,\omega}(x)\,dx$. The right-hand side is independent of the choice of $e_\xi$. At $\xi=0$, one has $\widehat{\widetilde K}_{g,\omega}(0)=\int_{e^\perp}X_xK(y,e;g,\omega)\,dy$ for any $e\in\Sph$.
		
		Finally, for every $x\in\Omega$,
		\begin{equation}\label{eq:intro-inverse-fourier}
			K(x,g,\omega)=\lim_{\kappa\downarrow0}(2\pi)^{-3}\int_{\R^3}e^{\mathsf i x\cdot\xi}e^{-\kappa|\xi|^2}\widehat{\widetilde K}_{g,\omega}(\xi)\,d\xi.
		\end{equation}
		The convergence is pointwise in $\Omega$ and locally uniform with respect to $x$ on compact subsets of $\Omega$.
	\end{theorem}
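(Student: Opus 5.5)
The plan is to reduce the theorem to the tested X-ray identity \eqref{eq:intro-tested-xray}, which is the analytic core. Once it is available, the rest follows from classical Fourier analysis. I would first establish \eqref{eq:intro-tested-xray} for every $e\in\Sph$, $g_0\in\mathscr G$, $\eta\in C_c^\infty(U)$, $\varphi\in C_c^\infty(\mathcal O)$ and $K\in\mathfrak K$. The equality hypothesis \eqref{eq:intro-measurement-equality} holds for all small $\eps,\delta$, so passing to the iterated limit gives
\[
\int_U\eta(y)\int_{\Sph}\varphi(\omega)\big[X_xK_1-X_xK_2\big](y,e;g_0,\omega)\,d\omega\,dy=0 .
\]
The bracket is locally integrable in $(y,\omega)$, since $K_j$ is bounded on $\Omega\times\ol{\mathscr G}\times\Sph$ by the admissible class. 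The test functions $\eta\otimes\varphi$ span a dense set, so $X_xK_1=X_xK_2$ for a.e. $(y,\omega)\in\Omega_e\times\mathcal O$. Continuity of $K$ (assumed in $\mathfrak K$) upgrades this to every point. Outside $\Omega_e$ the chord is empty and both transforms vanish, so the identification holds on all of $e^\perp$. The same argument with a single kernel shows that the measurements determine $X_xK$, which gives \eqref{eq:intro-line-data}.

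For \eqref{eq:intro-tested-xray} itself I would use the mild (Duhamel) formulation along characteristics in the scaled time $s=Rt$. First, the incoming species $F_1^R,F_2^R$ are free transport of the packets up to an $O(R^{-1})$ error. This follows from the bounded angularly integrated rates \eqref{eq:kernel-finite-rate-class} and \eqref{eq:Lambda-el}, because the transit time is $O(R^{-1})$ and a Gronwall argument on $(0,T/R)$ controls the loss and gain terms. Second, I would write $F_3^R$ as the transported forward gain $Q_{3,+}^K(F_1^R,F_2^R)$ plus remainders coming from further elastic and reactive collisions. Each remainder carries an extra factor of collision rate times transit time, so after the normalization by $R$ it is $o(1)$ uniformly in $\delta$. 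Third, I would insert the free packets into the weak gain formula \eqref{eq:third-gain-weak}. The two packets are synchronized and travel with common velocity $\approx Re$, so they overlap along the chord $y+\R e$. The product velocity is $Re+$ (a point on the known sphere determined by $g_0$ and $\omega$ through the kinematics of Subsection~\ref{subsec:kinematics}), and it exits at scaled time $s_0+L(y,e)+o(1)$. Testing against the detector response, the time gate and filter $\chi_{g_0,\varphi}$ evaluate to $\varphi(\omega)$, and integrating the collision location along the chord produces $\int K(y+\sigma e,g_0,\omega)\,d\sigma$. The $\eps\to0$ limit yields the same expression with packet velocities averaged over an $O(\delta)$ neighborhood of $g_0$. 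The $\delta\to0$ limit then uses continuity of $K$ in $g$ on $\mathscr G^\sharp$.

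The main obstacle is the uniformity of the remainder estimates. The packets concentrate in velocity, so their $L^\infty$ norms blow up as $\delta\to0$. The bounds therefore have to be expressed in terms of mass (an $L^1$-type quantity along characteristics) and the integrated rates rather than pointwise size, so that the multi-collision terms are $O(R^{-1})$ times quantities independent of $\delta$. I would first try a weighted $L^1_{x,v}$ bound on the nonnegative solution, exploiting positivity so that loss terms can be discarded in upper bounds.

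For the reconstruction formulas, $\widetilde K_{g,\omega}\in L^1\cap L^\infty$ has compact support. I write $x=y+\sigma e_\xi$ with $y\in e_\xi^\perp$ and apply Fubini; since $x\cdot\xi=y\cdot\xi$ when $e_\xi\cdot\xi=0$, this gives \eqref{eq:intro-fourier-reconstruction}. The formula shows the right-hand side equals $\widehat{\widetilde K}(\xi)$, so it is independent of $e_\xi$, and the case $\xi=0$ is the same computation with any $e$. The Gaussian regularization in \eqref{eq:intro-inverse-fourier} equals the convolution of $\widetilde K$ with the heat kernel at time $\kappa$. It converges pointwise and locally uniformly on $\Omega$ because $K(\cdot,g,\omega)$ is continuous on $\Omega$, by the standard approximate identity argument with uniform continuity on compact neighborhoods. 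Finally, uniqueness $K_1=K_2$ follows because equal X-ray data give equal Fourier transforms.
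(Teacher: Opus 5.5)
\textbf{Where the proposal agrees with the paper.} Your inverse half is correct and follows the paper's route:
\begin{itemize}
\item tensor-product tests, together with continuity of $(y,\omega)\mapsto X_xK(y,e;g_0,\omega)$, give pointwise line data;
\item the Fourier slice identity gives $\widehat{\widetilde K}_{g,\omega}$;
\item the Gaussian approximate identity gives the pointwise, locally uniform inversion and hence uniqueness.
\end{itemize}
The outline of your forward half also matches the paper. A single forward reaction dominates, and synchronized pulses make the exit time $s_0+L(y,e)$ independent of the reaction point. The filter $\chi_{g_0,\varphi}$ reads $\varphi(\omega)$ on the reaction sphere, and the limits are taken first in $\eps$ and then in $\delta$.

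\textbf{The gap: your function space does not close.} The gap lies in the estimate that is supposed to justify the tested X-ray identity. The collision operator is quadratic and pointwise in $x$. Hence $\|\Q_K(F)\|_{L^1_{x,v}}$ is controlled only by $\int\|F(x,\cdot)\|_{L^1_v}^2\,dx$, not by $\|F\|_{L^1_{x,v}}^2$. Positivity lets you discard loss terms, but the gain terms in the remainder still need an $L^\infty$ bound on local densities. Moreover, $L^\infty_xL^1_v$ is not preserved by free transport, because focusing is possible. A Gronwall or fixed-point argument needs a norm that both bounds the bilinear operator and is invariant under the free flow. A weighted $L^1_{x,v}$ norm has neither property for the quadratic term.

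\textbf{The missing idea.} The paper works with the mixed norm $\mathfrak X_T=L^1_w(L^\infty_{s,x})$ after the recentering $v=Re+w$. Galilean covariance makes the scaled collision term $\eps\Q_K(H^\eps)$ with the same operator $\Q_K$. In this norm:
\begin{itemize}
\item for fixed $w$, free transport and the Duhamel operator are translations in $(s,x)$, so $\|\G_\eps f\|_{\mathfrak X_T}\le T\|f\|_{\mathfrak X_T}$;
\item the bounded angular rates give $\|\B_K(F,G)\|_{\mathfrak X_T}\le C_0\|F\|_{\mathfrak X_T}\|G\|_{\mathfrak X_T}$;
\item the packets have norm at most $2$, since $0\le\zeta,\alpha\le1$ and $\int\psi_{i,\delta}=1$.
\end{itemize}
This gives a contraction for $\eps TC_0M_{\rm in}\le1/16$, the bound $\|H^\eps-U^\eps\|_{\mathfrak X_T}=O(\eps)$, and an $O(\eps^2)$ one-reaction remainder (Theorems~\ref{thm:forward-wp} and~\ref{thm:one-reaction}).

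Uniformity in $\delta$, which you single out as the main obstacle, is convenient but not needed for the iterated limit in the theorem. At fixed $\delta$ you only need the remainder to vanish as $\eps\to0$. The real obstruction is controlling the quadratic term at all.

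\textbf{Two further steps not addressed.} First, in this space the solution has no pointwise boundary trace. The flux must therefore be defined through the outgoing characteristic representative. The detector reading must then be converted to chord variables by $n\cdot a_\eps(w)\,dS=(a_\eps(w)\cdot e)\,dy$ (Proposition~\ref{prop:exact-detector-identity}). Second, letting $\eps\to0$ in the leading term requires a stability argument for the tilted characteristics $e+\eps w$, the chord lengths $L_\eps$, and the exit points $x_+^\eps$ (Lemma~\ref{lem:coefficient-convergence}).
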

	
	The theorem recovers the coefficient on all of $\Omega\times\mathscr G\times\mathcal O$ without a smallness assumption on its difference from a reference kernel. The incoming profiles have fixed size after integration in velocity. For each fixed packet width, they are independent of $R$ after the velocity translation; their pointwise values need not remain bounded as the packets concentrate. Proposition~\ref{prop:finite-resolution} gives a joint error estimate in $\eps$ and $\delta$, so the two limits in \eqref{eq:intro-tested-xray} may also be taken simultaneously for fixed detector tests. Proposition~\ref{prop:Xray-Plancherel} gives stability identities for the recovered X-ray transforms. Corollary~\ref{cor:raw-flux-noise} estimates the effect of errors in the measured flux on individual line integrals along chords that meet the boundary transversally.
	
	\para{Outline of the proof} We first write the full collision operator in event variables. The reaction maps commute with a common velocity translation, and the reactive change of variables fixes the forward and reverse normalizations. In particular, the detector recovers $K$ with respect to the angular measure $d\omega$.
	
	The small parameter is the transit time. Set $\eps=R^{-1}$ and $s=Rt$, and define
	\begin{equation}\label{eq:intro-H-scaling}
		H_i^\eps(s,x,w)=F_i^R(\eps s,x,Re+w),\quad R=\eps^{-1}.
	\end{equation}
	In these variables, the physical system becomes
	\begin{equation}\label{eq:intro-scaled}
		\big[\p_s+(e+\eps w)\cdot\nabla_x\big]H_i^\eps=\eps\Q_{K,i}(H^\eps).
	\end{equation}
	Under the bounds on the angularly integrated collision rates, the full collision operator is locally Lipschitz in $L^1_w(L^\infty_{s,x})$. For sufficiently small $\eps$, we prove well-posedness on the fixed interval $0<s<T$ for incoming data of fixed size in the norm used below. Let $U^\eps$ be the free transport solution with the same initial and incoming data, let $Q_{3,+}^K(f_1,f_2)$ denote the gain in the third component produced by one forward reaction, and let $\G_{\eps,3}$ be the corresponding characteristic Duhamel operator. Since the third and fourth free components vanish, we obtain
	\begin{equation}\label{eq:intro-one-reaction}
		H_3^\eps=\eps\G_{\eps,3}Q_{3,+}^K(U_1^\eps,U_2^\eps)+O(\eps^2).
	\end{equation}
	The third component gives the leading interaction of the two incoming populations without differentiating the boundary map.
	
	The solution space provides integrable bounds in velocity that are uniform in $(s,x)$, but it does not define a trace at every boundary point. We define the outgoing characteristic representative almost everywhere and estimate its integral against the detector response. Parametrizing the boundary by lines parallel to $e+\eps w$ gives the exact flux Jacobian and relates the scaled functional to the measured flux.
	
	For a fixed incoming relative velocity, the limiting velocities of the third species form a known sphere in the translated coordinates. A smooth velocity filter can be chosen to give any prescribed angular test on that sphere. The synchronized incoming pulses make the leading total flight time independent of the reaction point along each chord. The detector then gives the spatial line integral in \eqref{eq:intro-tested-xray}, and Fourier inversion recovers the kernel.
	
	\para{Earlier literature} Inverse problems for linear transport use the decomposition of the albedo kernel into ballistic, single scattering, and multiple scattering terms; see \cite{ChoulliStefanov1996,BalJollivet2008,Chung2021}. The contribution from a single scattering event contains line integrals or integrals along broken rays. We also separate the contributions according to the number of collisions, but here the first product signal comes from a bilinear interaction of two incoming populations.
	
	For the stationary nonlinear Boltzmann equation, \cite{LaiUhlmannYang2021} uses mixed second order linearization to recover information about the collision kernel from concentrated free solutions. Its kernel is independent of position, and uniqueness is proved under a monotonicity condition. Recovery near a Maxwellian equilibrium is studied in \cite{LiOuyang2023} by linearizing once under additional assumptions on the kernel. Logarithmic stability for a structured kernel depending on time is obtained in \cite{LaiYan2024} through second order linearization and the light ray transform. Nonlinear particle interactions in the relativistic Boltzmann equation are used in \cite{BalehowskyKujanpaaLassasLiimatainen2022} to recover a Lorentzian metric.
	
	Related inverse problems for semilinear transport equations are studied in \cite{LaiUhlmannZhou2024,LaiZhou2024NonlinearTransport}, using linearization together with Carleman estimates or weighted light ray transforms. The coupled nonscattering model in \cite{KlingenbergLaiLi2021} gives reconstruction of an emission coefficient from one experimental configuration. These models have local nonlinearities in place of the binary collision operator considered here.
	
	Concentrated beams with large velocities are used in \cite{LaiLiSun2024} to recover the doping profile in the stationary Vlasov--Poisson system through the X-ray transform of the self-consistent field. The BGK equation near a Maxwellian is studied in \cite{LaiZhou2026BGK}, where concentrated incoming data are applied to the linearized equations to recover a collision frequency. An affine Radon transform is obtained in \cite{Lin2026kHessian} by studying a fully nonlinear elliptic equation as the size of the boundary data tends to infinity. For the infinity Laplacian, \cite{Lin2026InfinityLaplacian} uses large affine boundary data to recover a positive potential. The asymptotic boundary measurements determine weighted chord integrals, and combining measurements in opposite directions gives the X-ray transform.
	
	Here the two incident species have a large common velocity and a fixed relative velocity. The reaction configuration stays fixed, while the time available for collisions tends to zero. Since no product species is injected, its outgoing flux has no ballistic contribution. We derive the spatial X-ray transform of a forward kernel depending on position from these product measurements, without differentiating the boundary map.
	
	The reaction kinematics and the scalar reciprocity law belong to the kinetic theory of reacting gas mixtures; see \cite{CercignaniIllnerPulvirenti1994,RossaniScarfone2003,AnwasiaEtAl2019,SarnaOblapenkoTorrilhon2020}. Cutoff hard potential reactive kernels are studied in \cite{BondesanTang2024}. Our forward estimates use bounded angularly integrated rates in both reaction directions. This assumption is stronger than an angular cutoff and is stated separately from the local regularity required for reconstruction.
	
	\para{Organization of the paper} Section~\ref{sec:model} introduces the collision operators and proves the estimates needed for the forward problem. Section~\ref{sec:scaled-forward} establishes well-posedness for large common velocities, derives the leading contribution from a single reaction, and defines the outgoing flux. Section~\ref{sec:measurements} constructs the incoming pulses and detector responses and proves the limit giving the X-ray transform. Section~\ref{sec:recovery} gives the uniqueness and reconstruction proofs, the stability identities, and the estimates for finite detector resolution and errors in the measured flux. It also discusses the interpretation and scope of the measurements.

	\section{Reactive collision model and cutoff estimates}\label{sec:model}
	
	\subsection{Reaction kinematics and microscopic reversibility}\label{subsec:kinematics}
	
	Let $m_i>0$ and $E_i\in\R$, $1\le i\le4$, be the masses and internal energies. We assume
	\begin{equation}\label{eq:mass-balance}
		m_1+m_2=m_3+m_4=:M.
	\end{equation}
	We use the nonrelativistic approximation in which the mass defect is neglected, and the reaction energy is included in the internal energies. Set $\Delta E=E_3+E_4-E_1-E_2$ and assume $\Delta E\ge0$. The reduced masses are $\mu_{12}=m_1m_2/M$ and $\mu_{34}=m_3m_4/M$. If the forward reaction is exothermic, the two channel pairs may be interchanged so that the energy gap is nonnegative.
	
	Set $g_{\rm th}=\Big(\frac{2\Delta E}{\mu_{12}}\Big)^{1/2}$. For $r\ge g_{\rm th}$, define
	\begin{equation}\label{eq:rho-plus}
		\rho_+(r)=\Big[\frac{\mu_{12}}{\mu_{34}}\Big(r^2-\frac{2\Delta E}{\mu_{12}}\Big)\Big]^{1/2},
	\end{equation}
	and, for $s\ge0$, define
	\begin{equation}\label{eq:rho-minus}
		\rho_-(s)=\Big[\frac{\mu_{34}}{\mu_{12}}s^2+\frac{2\Delta E}{\mu_{12}}\Big]^{1/2}.
	\end{equation}
	Then $\rho_-(\rho_+(r))=r$ for $r\ge g_{\rm th}$ and $\rho_+(\rho_-(s))=s$ for $s\ge0$. We use the forward reaction maps only when $|g|>g_{\rm th}$ and set $\rho_+(g_{\rm th})=0$ to extend the inverse relation continuously to the threshold.
	
	Given incoming velocities $v_1,v_2$, set $g=v_1-v_2$ and $V_{12}=(m_1v_1+m_2v_2)/M$. If $|g|>g_{\rm th}$ and $\omega\in\Sph$, define the forward reaction maps by
	\begin{equation}\label{eq:forward-map}
		\mathcal V_3^+(v_1,v_2,\omega)=V_{12}+\frac{m_4}{M}\rho_+(|g|)\omega, \quad
		\mathcal V_4^+(v_1,v_2,\omega)=V_{12}-\frac{m_3}{M}\rho_+(|g|)\omega.
	\end{equation}
	If $v_3=\mathcal V_3^+(v_1,v_2,\omega)$ and $v_4=\mathcal V_4^+(v_1,v_2,\omega)$, then $v_3-v_4=\rho_+(|g|)\omega$ and $(m_3v_3+m_4v_4)/M=V_{12}$.
	
	For reverse incoming velocities $v_3,v_4$, set $h=v_3-v_4$ and $V_{34}=(m_3v_3+m_4v_4)/M$. If $\theta\in\Sph$, define the reverse reaction maps by
	\begin{equation}\label{eq:reverse-map}
		\mathcal V_1^-(v_3,v_4,\theta)=V_{34}+\frac{m_2}{M}\rho_-(|h|)\theta, \quad
		\mathcal V_2^-(v_3,v_4,\theta)=V_{34}-\frac{m_1}{M}\rho_-(|h|)\theta.
	\end{equation}
	If $v_1=\mathcal V_1^-(v_3,v_4,\theta)$ and $v_2=\mathcal V_2^-(v_3,v_4,\theta)$, then
	$v_1-v_2=\rho_-(|h|)\theta$ and $(m_1v_1+m_2v_2)/M=V_{34}$.
	
	The reaction maps give $V_{12}=V_{34}$, and we denote this common center of mass velocity by $V$. It depends on the incoming pair and is unchanged by the reaction. The definitions of $\rho_+$ and $\rho_-$ also give conservation of total kinetic plus internal energy.
	
	For $z\in\R^3\setminus\{0\}$, write $\widehat z=z/|z|$. Let $\C_+=\{(g,\omega)\in\R^3\times\Sph:|g|>g_{\rm th}\}$ and $\C_-=(\R^3\setminus\{0\})\times\Sph$. Define
	\begin{equation}\label{eq:reaction-involution}
		\mathfrak c_+(g,\omega)=\big(\rho_+(|g|)\omega,\widehat g\big), \quad
		\mathfrak c_-(h,\theta)=\big(\rho_-(|h|)\theta,\widehat h\big).
	\end{equation}
	Then $\mathfrak c_-$ is the inverse of $\mathfrak c_+$.
	
	\begin{lemma}[Reactive change of variables]\label{lem:reactive-jacobian}
		Let $(v_3,v_4,\theta)$ be obtained from $(v_1,v_2,\omega)$ by \eqref{eq:forward-map} and $\theta=\widehat{v_1-v_2}$. If $r=|v_1-v_2|$ and $s=|v_3-v_4|=\rho_+(r)$, then
		\begin{equation}\label{eq:reactive-jacobian}
			dv_3\,dv_4\,d\theta=\frac{\mu_{12}}{\mu_{34}}\frac{s}{r}\,dv_1\,dv_2\,d\omega,
		\end{equation}
		where $d\theta$ and $d\omega$ denote the standard surface measures on $\Sph$. The transformation is inverted by \eqref{eq:reverse-map} and $\omega=\widehat{v_3-v_4}$.
	\end{lemma}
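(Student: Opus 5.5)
We factor the map $(v_1,v_2,\omega)\mapsto(v_3,v_4,\theta)$ through center-of-mass and relative coordinates, where it splits into a volume-preserving linear change of variables and a map on $\C_+$ whose Jacobian is elementary.

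\emph{Step 1: center-of-mass coordinates.} The linear map $(v_1,v_2)\mapsto(V_{12},g)$ with $g=v_1-v_2$ has inverse $v_1=V+\frac{m_2}{M}g$, $v_2=V-\frac{m_1}{M}g$. Its Jacobian determinant, computed blockwise on $\R^3\times\R^3$, equals $\det\begin{pmatrix}\frac{m_1}{M}&\frac{m_2}{M}\\ 1&-1\end{pmatrix}^3=(-1)^3$, with absolute value one. Hence $dv_1\,dv_2=dV\,dg$. In the same way, $dv_3\,dv_4=dV\,dh$ with $h=v_3-v_4$.

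\emph{Step 2: reduction to the relative map.} By \eqref{eq:forward-map}, the transformation sends $(V,g,\omega)\mapsto(V,h,\theta)$ with $V$ unchanged and $(h,\theta)=\mathfrak c_+(g,\omega)$. The identity \eqref{eq:reactive-jacobian} is therefore equivalent to
\begin{equation}
dh\,d\theta=\frac{\mu_{12}}{\mu_{34}}\frac{s}{r}\,dg\,d\omega
\end{equation}
on $\C_+$.

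\emph{Step 3: polar coordinates.} Write $g=r\theta$ with $r>g_{\rm th}$ and $\theta=\widehat g$, so that $dg=r^2\,dr\,d\theta$. Likewise $h=s\omega$ with $s=\rho_+(r)$ and $\omega=\widehat h$, so that $dh=s^2\,ds\,d\omega$. The map $\mathfrak c_+$ in these coordinates is $(r,\theta,\omega)\mapsto(s,\omega,\theta)$ with $s=\rho_+(r)$. This is a swap of the two sphere factors, which preserves the product measure $d\theta\,d\omega$, combined with the one-dimensional change of variables $r\mapsto s$. Differentiating $s^2=\frac{\mu_{12}}{\mu_{34}}\big(r^2-\frac{2\Delta E}{\mu_{12}}\big)$ gives $s\,ds=\frac{\mu_{12}}{\mu_{34}}r\,dr$. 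Therefore
\begin{equation}
dh\,d\theta=s^2\,ds\,d\omega\,d\theta=s\cdot\frac{\mu_{12}}{\mu_{34}}r\,dr\,d\omega\,d\theta=\frac{\mu_{12}}{\mu_{34}}\frac{s}{r}\,r^2\,dr\,d\theta\,d\omega=\frac{\mu_{12}}{\mu_{34}}\frac{s}{r}\,dg\,d\omega,
\end{equation}
as required.

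\emph{Step 4: inversion.} Since $\mathfrak c_-\circ\mathfrak c_+=\mathrm{id}$ and $V_{34}=V_{12}$, formula \eqref{eq:reverse-map} with $\omega=\widehat{v_3-v_4}$ recovers $(v_1,v_2)$ from $(v_3,v_4,\theta)$. Thus the map is a bijection from $\{|v_1-v_2|>g_{\rm th}\}\times\Sph$ onto $\{v_3\ne v_4\}\times\Sph$, and the measure identity holds as an identity of pushforward measures.

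The main point requiring care is that Step 3 mixes Lebesgue measure on $\R^3$ with surface measure on $\Sph$. The polar factorizations are measure isomorphisms away from the null sets $\{g=0\}$ and $\{h=0\}$, and the exchange of the two sphere factors is justified by Fubini's theorem, so the factorization is legitimate. The rest of the argument is elementary.
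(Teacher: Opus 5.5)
Your proposal is correct and follows essentially the same route as the paper: reduce to center-of-mass and relative coordinates using the unit-determinant linear maps, pass to polar coordinates in $g$ and $h$ (whose directions are exchanged), and use $s\,ds=\frac{\mu_{12}}{\mu_{34}}r\,dr$. Inversion is then obtained through $\rho_-$. Your extra remarks on the null sets $\{g=0\}$, $\{h=0\}$ and on the bijection onto $\{v_3\ne v_4\}\times\Sph$ make explicit what the paper leaves implicit, but they do not change the argument.
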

	
	\begin{proof}
		Set $g=r\theta$ and $h=s\omega$, where $s=\rho_+(r)$. The common center-of-mass velocity is $V=(m_1v_1+m_2v_2)/M=(m_3v_3+m_4v_4)/M$. Each of the linear transformations $(v_1,v_2)\mapsto(V,g)$ and $(v_3,v_4)\mapsto(V,h)$ has absolute determinant one. In polar coordinates,
		\[
		dv_1\,dv_2\,d\omega=dV\,r^2\,dr\,d\theta\,d\omega,
		\quad dv_3\,dv_4\,d\theta=dV\,s^2\,ds\,d\omega\,d\theta,
		\]
		where $dV$ denotes Lebesgue measure in the center-of-mass velocity variable $V\in\R^3$. Differentiating $s^2=(\mu_{12}/\mu_{34})r^2-2\Delta E/\mu_{34}$ gives $s\,ds=(\mu_{12}/\mu_{34})r\,dr$, which proves \eqref{eq:reactive-jacobian}. The identities $r=\rho_-(s)$, $g=r\theta$, and $\omega=\widehat h$ recover the original incoming pair through \eqref{eq:reverse-map}.
	\end{proof}
	
	\begin{lemma}[Galilean covariance]\label{lem:galilean}
		For every $q\in\R^3$, the forward reaction maps satisfy
		\[
		\mathcal V_i^+(v_1+q,v_2+q,\omega)=\mathcal V_i^+(v_1,v_2,\omega)+q, \quad i=3,4.
		\]
		The reverse reaction maps satisfy
		\[
		\mathcal V_i^-(v_3+q,v_4+q,\theta)=\mathcal V_i^-(v_3,v_4,\theta)+q,\quad i=1,2.
		\]
	\end{lemma}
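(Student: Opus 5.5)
The plan is a direct computation from the definitions \eqref{eq:forward-map} and \eqref{eq:reverse-map}. The key observation is that every ingredient of the maps other than the center of mass velocity is built from relative velocities, and these are invariant under a common translation. Fix $q\in\R^3$, and set $v_1'=v_1+q$ and $v_2'=v_2+q$.

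First, for the relative velocity we have $g'=v_1'-v_2'=v_1-v_2=g$. Hence $|g'|=|g|$. In particular, the admissibility condition $|g|>g_{\rm th}$ is preserved, so both sides of the claimed identity are defined on the same set. We also get $\rho_+(|g'|)=\rho_+(|g|)$.

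Second, for the center of mass, the identity $m_1+m_2=M$ from \eqref{eq:mass-balance} gives
\[
V_{12}'=\frac{m_1(v_1+q)+m_2(v_2+q)}{M}=V_{12}+q.
\]
Substituting these two facts into \eqref{eq:forward-map} yields
\[
\mathcal V_3^+(v_1+q,v_2+q,\omega)=V_{12}+q+\frac{m_4}{M}\rho_+(|g|)\omega=\mathcal V_3^+(v_1,v_2,\omega)+q.
\]
The computation for $\mathcal V_4^+$ is identical, with $-\frac{m_3}{M}\rho_+(|g|)\omega$ in place of the $\omega$-term.

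The reverse maps are handled in the same way. Here $h'=h$, and the identity $m_3+m_4=M$ gives $V_{34}'=V_{34}+q$. Since $\rho_-$ is defined for all $s\ge0$, no domain condition arises. Substituting into \eqref{eq:reverse-map} gives both reverse identities.

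No step is a genuine obstacle. The only point needing care is to note that the domain restriction $|g|>g_{\rm th}$ for the forward maps is translation invariant. This ensures the identity holds wherever the maps are defined.
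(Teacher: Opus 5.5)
Your proposal is correct and follows the same approach as the paper: both observe that a common translation leaves the relative velocities $g$ and $h$ unchanged and shifts the center-of-mass velocity by $q$, and then read off the identities from \eqref{eq:forward-map} and \eqref{eq:reverse-map}. Your additional remark that the forward domain condition $|g|>g_{\rm th}$ is translation invariant is a harmless and welcome detail that the paper leaves implicit.
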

	
	\begin{proof}
		A common velocity translation leaves $g$ and $h$ unchanged and sends each center-of-mass velocity $V$ to $V+q$. The formulas follow from \eqref{eq:forward-map} and \eqref{eq:reverse-map}.
	\end{proof}
	
	\para{The unknown kernel and reciprocity} The coefficient to be recovered is the forward kernel $K^+$. The reverse kernel $K^-$ is not an independent unknown. A forward collision configuration $(g,\omega)\in\C_+$ is associated with the reverse configuration
	\[
	(h,\theta)=\mathfrak c_+(g,\omega)=\big(\rho_+(|g|)\omega,\widehat g\big)\in\C_-.
	\]
	We impose the following reciprocity relation on ordered collision configurations. In the rotationally invariant case, it is the microscopic reversibility relation used in \cite{RossaniScarfone2003,BondesanTang2024}. For a general anisotropic kernel, the relation below is an explicit assumption of the model.
	
	\begin{definition}[Reciprocal kernels]\label{def:microreversible-kernel}
		Let $K^+:\ol\Omega\times\C_+\to[0,\infty)$ and $K^-:\ol\Omega\times\C_-\to[0,\infty)$ be nonnegative measurable kernels. We call $(K^+,K^-)$ reciprocal if
		\begin{equation}\label{eq:microreversibility}
			|g|K^+(x,g,\omega)=\Big(\frac{\mu_{34}}{\mu_{12}}\Big)^2|h|K^-(x,h,\theta)
		\end{equation}
		whenever $(h,\theta)=\mathfrak c_+(g,\omega)$.
	\end{definition}
	
	The map $\mathfrak c_+$ is a bijection from $\C_+$ onto $\C_-$ with inverse $\mathfrak c_-$. For $(h,\theta)\in\C_-$, substitute $(g,\omega)=\mathfrak c_-(h,\theta)=(\rho_-(|h|)\theta,\widehat h)$ into \eqref{eq:microreversibility}. This determines $K^-$ uniquely as
	\begin{equation}\label{eq:explicit-reverse-kernel}
		K^-(x,h,\theta)=\Big(\frac{\mu_{12}}{\mu_{34}}\Big)^2\frac{\rho_-(|h|)}{|h|}K^+\big(x,\rho_-(|h|)\theta,\widehat h\big),\quad h\ne0.
	\end{equation}
	Formula \eqref{eq:explicit-reverse-kernel} follows from the reciprocity condition. In integrals over all $g\in\R^3$, we extend $K^+$ by zero to $|g|\le g_{\rm th}$. Whenever a forward collision integral is written over all incoming velocities, its integrand is defined to be zero for $|g|\le g_{\rm th}$, without evaluating the reaction maps or radius factors there. The value of $K^-$ at $h=0$ is arbitrary, since this set does not contribute to the collision integrals.
	
	We next express the two event measures in the same variables. Suppose that $(v_3,v_4,\theta)$ is obtained from $(v_1,v_2,\omega)$ by the forward reaction map, with $\theta=\widehat{v_1-v_2}$. Set $r=|v_1-v_2|$ and $s=|v_3-v_4|=\rho_+(r)$. Lemma~\ref{lem:reactive-jacobian} and \eqref{eq:microreversibility} give
	\begin{equation}\label{eq:event-measure-reciprocity}
		\begin{aligned}
			K^-(x,h,\theta)\,dv_3\,dv_4\,d\theta
			=\frac{\mu_{12}}{\mu_{34}}\frac{s}{r}K^-(x,h,\theta)\,dv_1\,dv_2\,d\omega
			=\Big(\frac{\mu_{12}}{\mu_{34}}\Big)^3K^+(x,g,\omega)\,dv_1\,dv_2\,d\omega.
		\end{aligned}
	\end{equation}
	In \eqref{eq:event-measure-reciprocity}, the first equality is the reactive change of variables and the second is the reciprocity relation.
	
	When $K^+(x,g,\omega)=k^+(x,|g|,\widehat g\cdot\omega)$, the reaction involution preserves the scalar scattering angle, and \eqref{eq:microreversibility} reduces to the usual scalar relation. For anisotropic laws, the directions in \eqref{eq:reaction-involution} specify the convention used here; no symmetry of $K^+$ under an interchange of its two directional arguments is assumed. The inverse proof uses the reverse kernel only through its bound on the collision rate.
	
	For the gain in the measured component, set $c=\mu_{12}/\mu_{34}$, $h=v-v_4$, and $v_i^-=\mathcal V_i^-(v,v_4,\theta)$. Formula \eqref{eq:event-measure-reciprocity} gives
	\begin{equation}\label{eq:third-gain-reciprocal-normalization}
		Q_{3,+}^K(f_1,f_2)(x,v)=c^{-3}\int_{\R^3}\int_{\Sph}K^-(x,h,\theta)f_1(v_1^-)f_2(v_2^-)\,d\theta\,dv_4.
	\end{equation}
	The factor $c^{-3}$ comes from using the reverse event variables. The forward representation \eqref{eq:third-gain-weak} has no such factor.
	
	\begin{definition}[Admissible reactive kernels]\label{def:kernel-class}
		Fix bounded open sets
		\begin{equation}\label{eq:collision-windows}
			\mathscr G\Subset\mathscr G^\sharp\Subset\{g\in\R^3:\,|g|>g_{\rm th}\},
		\end{equation}
		and constants $M_0,M_1>0$.
		
		For a nonnegative function $K^+:\ol\Omega\times\C_+\to[0,\infty)$, let $K^-$ be the reciprocal kernel determined by \eqref{eq:explicit-reverse-kernel}. When $K^+$ is used in an integral over all $g\in\R^3$, it is extended by zero to $|g|\le g_{\rm th}$. Define the essential suprema in the spatial variable
		\begin{equation}\label{eq:reactive-spatial-majorants}
			\ol K^+(g,\omega)=\esssup_{x\in\Omega}K^+(x,g,\omega),
			\quad \ol K^-(h,\theta)=\esssup_{x\in\Omega}K^-(x,h,\theta),
		\end{equation}
		and the forward and reverse angularly integrated rates
		\begin{equation}\label{eq:Lambda-reactive}
			\begin{aligned}
				\Lambda_+&:=\esssup_{g\in\R^3}\int_{\Sph}\ol K^+(g,\omega)\,d\omega,\\
				\Lambda_-&:=\esssup_{h\in\R^3}\int_{\Sph}\ol K^-(h,\theta)\,d\theta.
			\end{aligned}
		\end{equation}
		We assume the rate bound
		\begin{equation}\label{eq:kernel-finite-rate-class}
			\Lambda_++\Lambda_-\le M_0.
		\end{equation}
		
		The admissible class is
		\begin{equation}\label{eq:admissible-kernel-class}
			\mathfrak K(\mathscr G^\sharp,M_0,M_1):=\left\{K^+:\ol\Omega\times\C_+\to[0,\infty)\ \middle|\
			\begin{array}{l}
				K^+\ \text{is measurable},\quad \Lambda_++\Lambda_-\le M_0,\\[1mm]
				K^+\big|_{\ol\Omega\times\ol{\mathscr G^\sharp}\times\Sph}\in
				C^1\big(\ol\Omega\times\ol{\mathscr G^\sharp}\times\Sph\big),\\[1mm]
				\|K^+\|_{C^1(\ol\Omega\times\ol{\mathscr G^\sharp}\times\Sph)}\le M_1
			\end{array}
			\right\}.
		\end{equation}
		We write $\mathfrak K=\mathfrak K(\mathscr G^\sharp,M_0,M_1)$. The value of $K^-$ at $h=0$ is arbitrary, since this set does not contribute to the collision integrals. The coefficient to be recovered is denoted by $K=K^+$. The reverse kernel is fixed by microscopic reversibility and is not an independent unknown.
	\end{definition}
	
	\begin{remark}[Threshold compatibility]\label{rem:reverse-threshold}
		When $\Delta E>0$, let $s=|h|$ and $r=\rho_-(s)$. The identity $r^2=g_{\rm th}^2+(\mu_{34}/\mu_{12})s^2$ gives $r\downarrow g_{\rm th}>0$ as $s\downarrow0$. Low reverse relative speeds correspond to forward relative speeds near the threshold. The factor $r/s$ in \eqref{eq:explicit-reverse-kernel} shows that the rate bound for $\ol K^-$ imposes a condition on $K^+$ near the threshold. This condition does not follow from the local $C^1$ bound on $\mathscr G^\sharp$, which is separated from the threshold.
		
		For example, suppose that the forward kernel is rotationally invariant and satisfies
		\begin{equation}\label{eq:threshold-compatible-example}
			K^+(x,g,\omega)\le C\frac{\rho_+(|g|)}{|g|} b_0(\widehat g\cdot\omega),	\quad |g|>g_{\rm th},
		\end{equation}
		where $b_0\ge0$ and $\int_{-1}^1b_0(z)\,dz<\infty$. Let $h\ne0$, set $s=|h|$ and $r=\rho_-(s)$. Since $\rho_+(r)=s$, formula \eqref{eq:explicit-reverse-kernel} gives
		\begin{align*}
			K^-(x,h,\theta)=\Big(\frac{\mu_{12}}{\mu_{34}}\Big)^2\frac{r}{s}K^+(x,r\theta,\widehat h)\le C\Big(\frac{\mu_{12}}{\mu_{34}}\Big)^2\frac{r}{s}\frac{\rho_+(r)}{r}
			b_0(\theta\cdot\widehat h)=C\Big(\frac{\mu_{12}}{\mu_{34}}\Big)^2 b_0(\theta\cdot\widehat h).
		\end{align*}
		Consequently,
		\[
		\int_{\Sph}\ol K^-(h,\theta)\,d\theta \le 2\pi C\Big(\frac{\mu_{12}}{\mu_{34}}\Big)^2\int_{-1}^1b_0(z)\,dz.
		\]
		Moreover,
		\[
		\int_{\Sph}\ol K^+(g,\omega)\,d\omega \le 2\pi C\frac{\rho_+(|g|)}{|g|} \int_{-1}^1b_0(z)\,dz.
		\]
		Since $\rho_+(r)/r\le(\mu_{12}/\mu_{34})^{1/2}$, both angularly integrated rates are uniformly bounded.
		
		The compatibility condition is used only in the reversible forward problem. The inverse argument is unchanged for an irreversible
		$1+2\to3+4$ model, or when the reverse kernel is prescribed independently and satisfies the same bound on the collision rate.
	\end{remark}
	
	Fix an open set $\mathcal O\Subset\Sph$. The collision configurations recovered in the paper form the set
	\begin{equation}\label{eq:accessible-collision-set}
		\mathcal C_{\rm acc}=\mathscr G\times\mathcal O.
	\end{equation}
	The larger set $\mathscr G^\sharp$ is used only to contain the relative velocities of the incident packets for small packet width. The total kernel may be nonzero outside $\mathscr G^\sharp$.
	
	\begin{remark}[Recovery on a prescribed collision set]\label{rem:windowed-unknown}
		Suppose that $K=K_{\rm bg}+q$, where $K_{\rm bg}$ is known and $K\in\mathfrak K$. Theorem~\ref{thm:main} determines $K$ and $q$ on $\Omega\times\mathcal C_{\rm acc}=\Omega\times\mathscr G\times\mathcal O$. The kernel $K$ may be nonzero outside $\mathcal C_{\rm acc}$. If $q$ is known to vanish outside $\mathcal C_{\rm acc}$ in those variables, the same measurements determine $q$ everywhere.
	\end{remark}

	\subsection{Reactive and elastic collision operators}\label{subsec:collision-operators}
	
	As in the Introduction, $Q_i^{\rm r}(F)$, $Q_i^{\rm el}(F)$, and $\Q_{K,i}(F)$ are quadratic operators on $F=(F_1,F_2,F_3,F_4)$. The notation with two scalar arguments, such as $Q_{3,+}^{K}(f_1,f_2)$ or $Q_{ij,+}^{\rm el}(f_i,f_j)$, denotes a bilinear gain for a specified reaction channel or elastic species pair. We define the forward and reverse reactive operators separately in their incoming event variables. No symmetry of the ordered anisotropic kernel is assumed.

	\begin{definition}[Reactive collision operator]\label{def:event-formulation}
		Let $K^+\in\mathfrak K$, and let $K^-$ be its reciprocal kernel. For a bounded test vector $\Phi=(\phi_1,\phi_2,\phi_3,\phi_4)$, define the reactive collision operator by
		\begin{equation}\label{eq:event-weak}
			\begin{split}
				&\sum_{i=1}^4\int_{\R^3}Q_i^{\rm r}(F)(x,v)\phi_i(v)\,dv\\
				&=\int_{\R^3}\int_{\R^3}\int_{\Sph}K^+(x,v_1-v_2,\omega)F_1(x,v_1)F_2(x,v_2)\\
				&\quad\quad\quad \quad \quad \times
				\big[\phi_3\big(\mathcal V_3^+(v_1,v_2,\omega)\big)+\phi_4\big(\mathcal V_4^+(v_1,v_2,\omega)\big)-\phi_1(v_1)-\phi_2(v_2)\big]\,d\omega\,dv_2\,dv_1\\
				&\quad\, +	\int_{\R^3}\int_{\R^3}\int_{\Sph}K^-(x,v_3-v_4,\theta)F_3(x,v_3)F_4(x,v_4)\\
				&\quad\quad \quad \quad \quad \times
				\big[\phi_1\big(\mathcal V_1^-(v_3,v_4,\theta)\big)	+\phi_2\big(\mathcal V_2^-(v_3,v_4,\theta)\big)-\phi_3(v_3)-\phi_4(v_4)\big]	\,d\theta\,dv_4\,dv_3.
			\end{split}
		\end{equation}
		The first integral uses the forward incoming velocities $(v_1,v_2)$ and the product direction $\omega$; the second uses the reverse incoming velocities $(v_3,v_4)$ and the product direction $\theta$. These are independent integrations, rather than an integration over one common quadruple $(v_1,v_2,v_3,v_4)$.
		
		For every bounded test function $\zeta$ on $\R^3$, the forward gain in the third component is characterized by
		\begin{equation}\label{eq:third-gain-weak}
			\begin{split}
				&\int_{\R^3}Q_{3,+}^{K}(F_1,F_2)(x,v)\zeta(v)\,dv\\
				&=\int_{\R^3}\int_{\R^3}\int_{\Sph}K^+(x,v_1-v_2,\omega)F_1(x,v_1)F_2(x,v_2)
				\zeta\big(\mathcal V_3^+(v_1,v_2,\omega)\big)\,d\omega\,dv_2\,dv_1.
			\end{split}
		\end{equation}
	\end{definition}
	
	For almost every $x\in\Omega$, every component of the reactive operator is the signed pushforward of the corresponding event measure. A gain term is obtained by pushing this measure to the velocity of a produced species, whereas a loss term is evaluated at the incoming velocity of a consumed species. Lemma~\ref{lem:reactive-jacobian} and the reciprocity relation \eqref{eq:microreversibility} relate the event formulation to the usual gain--loss representation while preserving the ordering of the variables in an anisotropic kernel.
	
	For $|g|>g_{\rm th}$ and $h\in\R^3$, set
	\begin{equation}\label{eq:reaction-radius-factors}
		\begin{aligned}
			\lambda_3(g)&=\frac{m_4}{M}\rho_+(|g|), & \lambda_4(g)&=\frac{m_3}{M}\rho_+(|g|),\\
			\lambda_1(h)&=\frac{m_2}{M}\rho_-(|h|), & \lambda_2(h)&=\frac{m_1}{M}\rho_-(|h|).
		\end{aligned}
	\end{equation}
	We first write the forward gain in the third component as a density. Let $\zeta\in L^\infty(\R^3)$. With $V=(m_1v_1+m_2v_2)/M$ and $g=v_1-v_2$, the inverse formulas are $v_1=V+m_2g/M$ and $v_2=V-m_1g/M$, and the Jacobian is one. The third product velocity is $V+\lambda_3(g)\omega$, so \eqref{eq:third-gain-weak} becomes
	\begin{equation}\label{eq:third-gain-pushforward}
		\begin{split}
			&\int_{\R^3}Q_{3,+}^{K}(f_1,f_2)(x,v)\zeta(v)\,dv\\
			&=\int_{\R^3}\int_{|g|>g_{\rm th}}\int_{\Sph}K^+(x,g,\omega)f_1\Big(V+\frac{m_2}{M}g\Big)
			f_2\Big(V-\frac{m_1}{M}g\Big)\zeta\big(V+\lambda_3(g)\omega\big)\,d\omega\,dg\,dV\\
			&= \int_{\R^3}\zeta(v)\int_{|g|>g_{\rm th}}\int_{\Sph} K^+(x,g,\omega)f_1\Big(v-\lambda_3(g)\omega+\frac{m_2}{M}g\Big)
			f_2\Big(v-\lambda_3(g)\omega-\frac{m_1}{M}g\Big)\,d\omega\,dg\,dv,
		\end{split}
	\end{equation}
	where the last equality uses $v=V+\lambda_3(g)\omega$ for fixed $(g,\omega)$, with $dV=dv$. Since $\zeta$ is arbitrary, the gain density is
	\begin{equation}\label{eq:third-gain-strong}
		\begin{split}
			&Q_{3,+}^{K}(f_1,f_2)(x,v)\\&=\int_{|g|>g_{\rm th}}\int_{\Sph}K^+(x,g,\omega)f_1\Big(v-\lambda_3(g)\omega+\frac{m_2}{M}g\Big)
			f_2\Big(v-\lambda_3(g)\omega-\frac{m_1}{M}g\Big)\,d\omega\,dg.
		\end{split}
	\end{equation}
	After extending $K^+$ by zero to $|g|\le g_{\rm th}$, the integral in \eqref{eq:third-gain-strong} may be taken over all $g\in\R^3$. For fixed $(g,\omega)$, the observed velocity $v$ determines the center-of-mass velocity as $V=v-\lambda_3(g)\omega$.
	
	\begin{proposition}[Reactive gains in $L^1$]\label{prop:all-reactive-gains}
		Let $K^+\in\mathfrak K$, and let $K^-$ be the reverse kernel determined by microscopic reversibility. Let $f_i\in L^1(\R^3)$ for $1\le i\le4$. For almost every $x\in\Omega$, every gain term determined by \eqref{eq:event-weak} is an $L^1$ function of its output velocity.
		
		In addition to \eqref{eq:third-gain-strong}, the forward gain in the fourth component is
		\begin{equation}\label{eq:fourth-gain-strong}
			\begin{split}
				&Q_{4,+}^{K}(f_1,f_2)(x,v)\\
				&=\int_{|g|>g_{\rm th}}\int_{\Sph}
				K^+(x,g,\omega)
				f_1\Big(v+\lambda_4(g)\omega+\frac{m_2}{M}g\Big)
				f_2\Big(v+\lambda_4(g)\omega-\frac{m_1}{M}g\Big)
				\,d\omega\,dg.
			\end{split}
		\end{equation}
		The reverse gain in the first component is
		\begin{equation}\label{eq:first-reverse-gain}
			\begin{split}
				&Q_{1,+}^{K^-}(f_3,f_4)(x,v)\\
				&=\int_{\R^3}\int_{\Sph}
				K^-(x,h,\theta)
				f_3\Big(v-\lambda_1(h)\theta+\frac{m_4}{M}h\Big)
				f_4\Big(v-\lambda_1(h)\theta-\frac{m_3}{M}h\Big)
				\,d\theta\,dh,
			\end{split}
		\end{equation}
		and the reverse gain in the second component is
		\begin{equation}\label{eq:second-reverse-gain}
			\begin{split}
				&Q_{2,+}^{K^-}(f_3,f_4)(x,v)\\
				&=\int_{\R^3}\int_{\Sph}
				K^-(x,h,\theta)
				f_3\Big(v+\lambda_2(h)\theta+\frac{m_4}{M}h\Big)
				f_4\Big(v+\lambda_2(h)\theta-\frac{m_3}{M}h\Big)
				\,d\theta\,dh.
			\end{split}
		\end{equation}
		Moreover, the following estimates hold for almost every $x\in\Omega$, with constants independent of $x$:
		\begin{equation}\label{eq:reactive-gain-L1-bounds}
			\begin{aligned}
				\|Q_{3,+}^{K}(f_1,f_2)(x,\cdot)\|_{L^1}+\|Q_{4,+}^{K}(f_1,f_2)(x,\cdot)\|_{L^1}&
				\le2\Lambda_+\|f_1\|_{L^1}\|f_2\|_{L^1},\\
				\|Q_{1,+}^{K^-}(f_3,f_4)(x,\cdot)\|_{L^1}+\|Q_{2,+}^{K^-}(f_3,f_4)(x,\cdot)\|_{L^1}
				&\le2\Lambda_-\|f_3\|_{L^1}\|f_4\|_{L^1}.
			\end{aligned}
		\end{equation}
		If the inputs are nonnegative, then the $L^1$ norm of each gain is equal to the corresponding total event rate.
	\end{proposition}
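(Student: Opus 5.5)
The approach treats each gain as the pushforward of a nonnegative (or absolutely integrable) event measure, following the derivation of \eqref{eq:third-gain-strong} in \eqref{eq:third-gain-pushforward}. Fix $x$ outside a null set on which $K^\pm(x,\cdot,\cdot)\le\ol K^\pm$ fails. Fubini permits this, since $\ol K^\pm$ is defined as an essential supremum in $x$. For the fourth component we test \eqref{eq:event-weak} with $\Phi=(0,0,0,\zeta)$ and pass to $(V,g)$ coordinates, which has unit Jacobian. The product velocity is $V-\lambda_4(g)\omega$. For fixed $(g,\omega)$ we substitute $v=V-\lambda_4(g)\omega$, again with $dV=dv$. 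This gives $v_1=v+\lambda_4(g)\omega+\frac{m_2}{M}g$ and $v_2=v+\lambda_4(g)\omega-\frac{m_1}{M}g$, which is \eqref{eq:fourth-gain-strong}. The reverse gains \eqref{eq:first-reverse-gain} and \eqref{eq:second-reverse-gain} follow in the same way from the second integral in \eqref{eq:event-weak}. There we use $V=(m_3v_3+m_4v_4)/M$ and $h=v_3-v_4$, so that $v_3=V+\frac{m_4}{M}h$ and $v_4=V-\frac{m_3}{M}h$. The product velocities are $V+\lambda_1(h)\theta$ and $V-\lambda_2(h)\theta$, and we shift $V$ accordingly. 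In every case $\zeta\in L^\infty$ is arbitrary, so the density is identified once we know that the right-hand side lies in $L^1(dv)$.

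The integrability and the bounds \eqref{eq:reactive-gain-L1-bounds} come from Tonelli applied to absolute values. First assume $f_i\ge0$ and take $\zeta\equiv1$. Then
\[
\|Q_{3,+}^K(f_1,f_2)(x,\cdot)\|_{L^1}=\int\!\!\int\!\!\int K^+(x,v_1-v_2,\omega)f_1(v_1)f_2(v_2)\,d\omega\,dv_2\,dv_1 .
\]
This is exactly the total forward event rate, which proves the last sentence of the proposition. Bounding $\int_{\Sph}K^+(x,g,\omega)\,d\omega\le\int_{\Sph}\ol K^+(g,\omega)\,d\omega\le\Lambda_+$ for a.e.\ $g$ then yields $\Lambda_+\|f_1\|_{L^1}\|f_2\|_{L^1}$. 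The fourth component gives the same bound, so the two together give the factor $2\Lambda_+$. The reverse gains are handled identically with $\Lambda_-$. For signed $f_i$ we apply this to $|f_i|$. The pointwise inequality $|Q_{j,+}(f,f')|\le Q_{j,+}(|f|,|f'|)$ shows that the defining integrals converge absolutely for a.e.\ $v$, so the gains are $L^1$ functions with the same bounds, and Fubini justifies the substitutions used above.

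The one point needing care is the null set. The bound $K^\pm(x,g,\omega)\le\ol K^\pm(g,\omega)$ holds for a.e.\ $(x,g,\omega)$ rather than for all $x$. Fubini gives a full-measure set of $x$ on which it holds for a.e.\ $(g,\omega)$, and the estimates are stated only for those $x$. A second point is the threshold: $K^+$ is extended by zero for $|g|\le g_{\rm th}$, so the reaction maps are never evaluated there. The value $h=0$ carries measure zero and can be ignored. I expect no real obstacle beyond this measure-theoretic bookkeeping. I will not use reciprocity here, since $\Lambda_-$ is assumed directly in \eqref{eq:kernel-finite-rate-class}.
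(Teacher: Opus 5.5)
Your proposal is correct and follows essentially the same route as the paper. Both arguments pass to center-of-mass and relative coordinates (unit Jacobian) and shift the center-of-mass variable by the product displacement to read off each gain density. Both then bound the $L^1$ norms through Tonelli, the majorants $\ol K^\pm$, and the rates $\Lambda_\pm$, without invoking reciprocity. Your Fubini argument, which shows that $K^\pm(x,\cdot,\cdot)\le\ol K^\pm$ holds a.e.\ in $(g,\omega)$ for a.e.\ $x$, makes explicit a measure-theoretic step the paper leaves implicit.
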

	
	\begin{proof}
		Formula \eqref{eq:third-gain-strong} was derived above. For the fourth component, use the same variables $(V,g)$. The fourth product velocity is $v_4=V-\lambda_4(g)\omega$. For a prescribed output velocity $v$, one has $V=v+\lambda_4(g)\omega$. Substitution into $v_1=V+m_2g/M$ and $v_2=V-m_1g/M$ gives \eqref{eq:fourth-gain-strong}.
		
		For the reverse reaction, introduce $W=\frac{m_3v_3+m_4v_4}{M}$, $h=v_3-v_4$. The inverse formulas are $v_3=W+\frac{m_4}{M}h$, $v_4=W-\frac{m_3}{M}h$, and $dv_3\,dv_4=dW\,dh$. The first reverse product velocity is $v_1=W+\lambda_1(h)\theta$. For a prescribed value $v$ of the output velocity of the first component, one has $W=v-\lambda_1(h)\theta$. Substitution into the formulas for $v_3$ and $v_4$ gives \eqref{eq:first-reverse-gain}. Similarly, the second reverse product velocity is $v_2=W-\lambda_2(h)\theta$. Thus, for a prescribed output velocity of the second component $v$, one has $W=v+\lambda_2(h)\theta$. This gives \eqref{eq:second-reverse-gain}.
		
		We prove the first estimate in \eqref{eq:reactive-gain-L1-bounds}. By Tonelli's theorem and the event representation,
		\begin{align*}
			\|Q_{3,+}^{K}(f_1,f_2)(x,\cdot)\|_{L^1}&\le \int_{\R^3}\int_{\R^3} \bigg[\int_{\Sph}
			K^+(x,v_1-v_2,\omega)\,d\omega\bigg] |f_1(v_1)||f_2(v_2)|\,dv_2\,dv_1\\
			&\le \int_{\R^3}\int_{\R^3} \bigg[\int_{\Sph} \ol K^+(v_1-v_2,\omega)\,d\omega\bigg]
			|f_1(v_1)||f_2(v_2)|\,dv_2\,dv_1\\ &\le\Lambda_+\|f_1\|_{L^1}\|f_2\|_{L^1}.
		\end{align*}
		The same argument applied to the forward event and reverse event measures gives
		\begin{equation}
			\begin{split}
				\|Q_{4,+}^{K}(f_1,f_2)(x,\cdot)\|_{L^1}&\le \Lambda_+\|f_1\|_{L^1}\|f_2\|_{L^1}, \\ \|Q_{1,+}^{K^-}(f_3,f_4)(x,\cdot)\|_{L^1}&\le \Lambda_-\|f_3\|_{L^1}\|f_4\|_{L^1}, \\
				\|Q_{2,+}^{K^-}(f_3,f_4)(x,\cdot)\|_{L^1}&\le \Lambda_-\|f_3\|_{L^1}\|f_4\|_{L^1}.
			\end{split}
		\end{equation}
		These four estimates prove \eqref{eq:reactive-gain-L1-bounds}. For nonnegative inputs, every gain is nonnegative. Integrating in the output velocity, or testing the weak formula with the constant function one, gives its $L^1$ norm as the total rate of the corresponding events.
	\end{proof}
	
	The loss terms are evaluated at the incoming velocity of the species removed by the reaction. Define
	\begin{equation}\label{eq:reactive-loss-frequencies}
		\begin{aligned}
			\nu_1^{\rm fwd}[f_2](x,v)&=\int_{\R^3}\int_{\Sph}K^+(x,v-v_2,\omega)f_2(v_2)
			\,d\omega\,dv_2,\\
			\nu_2^{\rm fwd}[f_1](x,v)&=\int_{\R^3}\int_{\Sph}K^+(x,v_1-v,\omega)f_1(v_1)
			\,d\omega\,dv_1,\\
			\nu_3^{\rm rev}[f_4](x,v)&=\int_{\R^3}\int_{\Sph}K^-(x,v-v_4,\theta)f_4(v_4)
			\,d\theta\,dv_4,\\
			\nu_4^{\rm rev}[f_3](x,v)&=\int_{\R^3}\int_{\Sph}K^-(x,v_3-v,\theta)f_3(v_3)
			\,d\theta\,dv_3.
		\end{aligned}
	\end{equation}
	The componentwise strong gain--loss form of the reactive operator is 
	\begin{equation}\label{eq:third-reactive-gain-loss}
		\begin{aligned}
			Q_1^{\rm r}(F)(x,v)
			&=Q_{1,+}^{K^-}(F_3,F_4)(x,v)
			-\nu_1^{\rm fwd}[F_2](x,v)F_1(x,v),\\
			Q_2^{\rm r}(F)(x,v)
			&=Q_{2,+}^{K^-}(F_3,F_4)(x,v)
			-\nu_2^{\rm fwd}[F_1](x,v)F_2(x,v),\\
			Q_3^{\rm r}(F)(x,v)
			&=Q_{3,+}^{K}(F_1,F_2)(x,v)
			-\nu_3^{\rm rev}[F_4](x,v)F_3(x,v),\\
			Q_4^{\rm r}(F)(x,v)
			&=Q_{4,+}^{K}(F_1,F_2)(x,v)
			-\nu_4^{\rm rev}[F_3](x,v)F_4(x,v).
		\end{aligned}
	\end{equation}
	In particular, the third equation contains the gain created by the forward reaction $1+2\to3+4$ and the loss created by the reverse reaction $3+4\to1+2$. The full third component of \eqref{eq:intro-system} is obtained by adding the known elastic term $Q_3^{\rm el}(F)$.
	
	\begin{remark}[Choice of the detected product channel]\label{rem:detected-product-channel}
		The choice of the third component is made only to fix notation. For a forward reaction with incoming relative velocity $g=v_1-v_2$ and center-of-mass velocity $V=\frac{m_1v_1+m_2v_2}{M}$, the product velocities satisfy 
		\[
		\mathcal V_3^+(v_1,v_2,\omega)-V=\lambda_3(g)\omega,\quad \text{and}\quad 
		\mathcal V_4^+(v_1,v_2,\omega)-V=-\lambda_4(g)\omega.
		\]
		Both product velocities are parametrized by the same $\omega$, through $\omega\mapsto\lambda_3(g)\omega$ and $\omega\mapsto-\lambda_4(g)\omega$. Only the known radius and the sign differ.
		
		For the incident beams used below, the recentered center of mass velocity tends to zero as the packets concentrate. To measure the fourth component, choose its velocity filter to equal $\varphi(\omega)$ at $-\lambda_4(g_0)\omega$, using this parametrization in place of $\lambda_3(g_0)\omega$. The gain \eqref{eq:fourth-gain-strong} then gives the same limiting integral $\int_{\Sph}K(x,g_0,\omega)\varphi(\omega)\,d\omega$ as the gain in the third component. The X-ray identity and recovery proof are unchanged. We use the third component throughout to avoid repeating the notation.
	\end{remark}
	
	\begin{proposition}[Conservation laws]\label{prop:conservation}
		Assume that the nonnegative distributions have sufficient velocity moments so that all the expressions below are absolutely integrable. Then, for almost every $x\in\Omega$, the reactive operator conserves the total number of particles, total mass, momentum, and total kinetic plus internal energy. More precisely,
		\begin{equation}\label{eq:conservation-number}
			\sum_{i=1}^4\int_{\R^3}Q_i^{\rm r}(F)(x,v)\,dv=0,
		\end{equation}
		\begin{equation}\label{eq:conservation-mass-momentum}
			\sum_{i=1}^4\int_{\R^3}m_iQ_i^{\rm r}(F)(x,v)\,dv=0, \quad \sum_{i=1}^4\int_{\R^3}m_ivQ_i^{\rm r}(F)(x,v)\,dv=0,
		\end{equation}
		and
		\begin{equation}\label{eq:conservation-energy}
			\sum_{i=1}^4\int_{\R^3}\Big(\frac12m_i|v|^2+E_i\Big)Q_i^{\rm r}(F)(x,v)\,dv=0.
		\end{equation}
	\end{proposition}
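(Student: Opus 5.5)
The plan is to use the weak event formulation \eqref{eq:event-weak} with test vectors built from the collision invariants. Each identity follows once the bracketed gain-minus-loss expression vanishes pointwise for every forward event $(v_1,v_2,\omega)$ and every reverse event $(v_3,v_4,\theta)$. First I justify testing with unbounded $\phi_i$: Definition~\ref{def:event-formulation} is stated for bounded test vectors. For the unbounded ones I would first apply it to truncations $\phi_i\mathbf 1_{|v|\le N}$ (or $\min(\phi_i,N)$ componentwise), and then pass $N\to\infty$ by dominated convergence. This is justified by the moment assumption: absolute integrability of the expressions, together with Proposition~\ref{prop:all-reactive-gains} and the rate bounds $\Lambda_\pm\le M_0$, controls the event integrals $\int\!\!\int\!\!\int K^\pm F F\,|\phi|$. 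I will read ``sufficient moments'' as ensuring, for a.e.\ $x$, that $\int\!\!\int\!\!\int K^+F_1F_2(1+|v_1|^2+|v_2|^2)$ and the reverse analogue are finite. The loss terms are then absolutely integrable, and so are the gain terms, since the product velocities have kinetic energy bounded by the incoming energy plus a constant. This truncation step is the only mildly delicate point, and I expect it to be the main obstacle, though a routine one.

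With that in hand I choose the test vectors in turn. For number conservation \eqref{eq:conservation-number} take $\phi_i\equiv1$, so each bracket is $1+1-1-1=0$. For mass in \eqref{eq:conservation-mass-momentum} take $\phi_i=m_i$; the brackets become $m_3+m_4-m_1-m_2$ and its negative, both zero by \eqref{eq:mass-balance}. For momentum take $\phi_i(v)=m_iv$ componentwise. The forward bracket is then $m_3\mathcal V_3^++m_4\mathcal V_4^+-m_1v_1-m_2v_2=MV_{12}-MV_{12}$, using \eqref{eq:forward-map}: there $m_3\tfrac{m_4}{M}\rho_+\omega-m_4\tfrac{m_3}{M}\rho_+\omega=0$. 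The reverse bracket vanishes in the same way by \eqref{eq:reverse-map}.

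For energy \eqref{eq:conservation-energy} take $\phi_i=\tfrac12m_i|v|^2+E_i$. Using the center of mass decomposition $\tfrac12m_1|v_1|^2+\tfrac12m_2|v_2|^2=\tfrac12M|V|^2+\tfrac12\mu_{12}|g|^2$, and similarly for the products with $\mu_{34}$, the forward bracket equals
\[
\tfrac12\mu_{34}\rho_+(|g|)^2-\tfrac12\mu_{12}|g|^2+\Delta E .
\]
This vanishes by \eqref{eq:rho-plus}, since $\mu_{34}\rho_+^2=\mu_{12}|g|^2-2\Delta E$. The reverse bracket is $\tfrac12\mu_{12}\rho_-(|h|)^2-\tfrac12\mu_{34}|h|^2-\Delta E=0$ by \eqref{eq:rho-minus}. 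The forward integrand is only evaluated for $|g|>g_{\rm th}$, by the zero extension convention, so $\rho_+$ is well defined there. Finally, equating the weak form with $\sum_i\int Q_i^{\rm r}\phi_i\,dv$ gives each identity, and this holds for a.e.\ $x$.
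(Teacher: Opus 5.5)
Your proposal is correct and follows the paper's proof essentially step for step: you test the event formulation \eqref{eq:event-weak} with $\phi_i=1$, $m_i$, $m_iv$, and $\tfrac12m_i|v|^2+E_i$, and you check that every forward and reverse event bracket vanishes using \eqref{eq:mass-balance}, the reaction maps, the center-of-mass splitting, and the definitions of $\rho_\pm$. You also handle the unbounded invariants by compactly supported truncation and dominated convergence, as the paper does. (One small correction: use the indicator truncation $\phi_i\mathbf 1_{|v|\le N}$ rather than $\min(\phi_i,N)$, because the momentum test functions are signed and vector-valued.)
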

	
	\begin{proof}
		The identities follow by testing the event formulation \eqref{eq:event-weak} with the collision invariants. We first consider a forward event. For the total particle number, take $\phi_i=1$ for every $i$. The event bracket is $1+1-1-1=0$. Thus, every forward reaction removes two particles and creates two particles. The same calculation applies to the reverse event.
		
		For total mass, take $\phi_i=m_i$. The forward event bracket is $m_3+m_4-m_1-m_2=0$ by \eqref{eq:mass-balance}. This proves the first identity in 	\eqref{eq:conservation-mass-momentum}.
		
		For momentum, apply the event formulation componentwise with $\phi_i(v)=m_iv$. By the forward reaction maps,
		\begin{align*}
			m_3v_3+m_4v_4=m_3\Big(V+\frac{m_4}{M}\rho_+(|g|)\omega\Big)+m_4\Big(V-\frac{m_3}{M}\rho_+(|g|)\omega\Big)=(m_3+m_4)V=m_1v_1+m_2v_2.
		\end{align*}
		Hence, the momentum event bracket vanishes. The reverse maps satisfy the same identity with the incoming and product pairs interchanged.
		
		We finally consider the total kinetic plus internal energy. For any two species $i$ and $j$, set
		$\mu_{ij}=m_im_j/(m_i+m_j)$. If $V_{ij}$ is their center-of-mass velocity, then
		\[
		\frac12m_i|v_i|^2+\frac12m_j|v_j|^2=\frac12(m_i+m_j)|V_{ij}|^2+\frac12\mu_{ij}|v_i-v_j|^2.
		\]
		For a forward reaction, the center-of-mass velocity is the same for the two channel pairs, while $|v_1-v_2|=|g|$, $|v_3-v_4|=\rho_+(|g|)$. Consequently, the total energy of the product pair is
		\begin{align*}
			&\frac12m_3|v_3|^2+\frac12m_4|v_4|^2+E_3+E_4=\frac12M|V|^2
			+\frac12\mu_{34}\rho_+(|g|)^2+E_3+E_4.
		\end{align*}
		By the definition \eqref{eq:rho-plus} of $\rho_+$, we have $\frac12\mu_{34}\rho_+(|g|)^2=\frac12\mu_{12}|g|^2-\Delta E$. Since $\Delta E=E_3+E_4-E_1-E_2$, it follows that
		\begin{align*}
			\frac12M|V|^2+\frac12\mu_{34}\rho_+(|g|)^2+E_3+E_4&=\frac12M|V|^2
			+\frac12\mu_{12}|g|^2+E_1+E_2\\
			&=\frac12m_1|v_1|^2+\frac12m_2|v_2|^2+E_1+E_2.
		\end{align*}
		Thus, the energy event bracket vanishes. The reverse identity follows
		either from the same calculation using $\rho_-$ or from the fact
		that the reverse reaction map is the inverse of the forward map.
		
		Inserting these collision invariants into
		\eqref{eq:event-weak} proves
		\eqref{eq:conservation-number}--\eqref{eq:conservation-energy}.
		For the unbounded momentum and energy test functions, one first uses
		compactly supported truncations and then passes to the limit by
		dominated convergence under the stated moment assumptions.
	\end{proof}

	\para{Elastic collisions and the full operator} 
	For each ordered species pair $(i,j)$, let $B_{ij}(x,g,\sigma)\ge0$ be a known angular-cutoff elastic kernel. We assume the interchange symmetry $B_{ji}(x,-g,-\sigma)=B_{ij}(x,g,\sigma)$. Given incoming velocities $v,v_*\in\R^3$, set 
	\[
	M_{ij}=m_i+m_j,\quad V_{ij}=\frac{m_iv+m_jv_*}{M_{ij}}, \quad g=v-v_*.
	\]
	The elastic post-collisional velocities are
	\[
	v'=V_{ij}+\frac{m_j}{M_{ij}}|g|\sigma, \quad v_*'=V_{ij}-\frac{m_i}{M_{ij}}|g|\sigma, \quad \sigma\in\Sph.
	\]
	Thus, the center-of-mass velocity and the relative speed are preserved.
	
	For a bounded test vector $\Phi=(\phi_1,\phi_2,\phi_3,\phi_4)$, define the elastic collision operator by
	\begin{equation}\label{eq:elastic-event}
		\begin{split}
			\sum_{i=1}^4\int_{\R^3}Q_i^{\rm el}(F)(x,v)\phi_i(v)\,dv&=\frac12\sum_{i,j=1}^4	\int_{\R^3}\int_{\R^3}\int_{\Sph}B_{ij}(x,v-v_*,\sigma)F_i(x,v)F_j(x,v_*)\\
			&\quad\times \big[\phi_i(v')+\phi_j(v_*')-\phi_i(v)-\phi_j(v_*)\big]\,d\sigma\,dv_*\,dv.
		\end{split}
	\end{equation}
	
	The full collision operator in \eqref{eq:intro-system} is $\Q_{K,i}(F)=Q_i^{\rm el}(F)+Q_i^{\rm r}(F)$. Since $\Q_K$ is homogeneous quadratic, we define its symmetric bilinear polarization by \begin{equation}\label{eq:polarization}
		\B_K(F,G)=\frac12\big[\Q_K(F+G)-\Q_K(F)-\Q_K(G)\big].
	\end{equation}
	Then $\Q_K(F)=\B_K(F,F)$. The $Q$-notation with two arguments is used only for gains associated with a specified reaction channel or elastic species pair.
	
	For the elastic gain associated with $(i,j)$, write $\lambda_{ij}(g)=m_j|g|/M_{ij}$. The change of variables $(v,v_*)\mapsto(V_{ij},g)$ has Jacobian one, and $v'=V_{ij}+\lambda_{ij}(g)\sigma$. For fixed $(g,\sigma)$ and output velocity $u=v'$, substitute $V_{ij}=u-\lambda_{ij}(g)\sigma$ to obtain
	\begin{equation}\label{eq:elastic-gain-density}
		\begin{split}
			Q_{ij,+}^{\rm el}(f_i,f_j)(x,u)&=\int_{\R^3}\int_{\Sph}B_{ij}(x,g,\sigma)\\
			&\quad\times f_i\Big(u-\lambda_{ij}(g)\sigma+\frac{m_j}{M_{ij}}g\Big)
			f_j\Big(u-\lambda_{ij}(g)\sigma-\frac{m_i}{M_{ij}}g\Big)\,d\sigma\,dg.
		\end{split}
	\end{equation}
	Integrating this expression in $u$ gives the corresponding total elastic event rate. The elastic and reactive gains are all realized as $L^1$ densities under the bounds on the collision rates introduced below.
	
	\subsection{Estimates for bounded collision rates}\label{subsec:cutoff-estimates}
	
	For the known elastic kernels, define 
	\begin{equation}\label{eq:elastic-spatial-majorants}
		\ol B_{ij}(g,\sigma)=\esssup_{x\in\Omega}B_{ij}(x,g,\sigma)
	\end{equation}
	and assume
	\begin{equation}\label{eq:Lambda-el}
		\Lambda_{\rm el}=\frac12\sum_{i,j=1}^4\esssup_{g\in\R^3}
		\int_{\Sph}\ol B_{ij}(g,\sigma)\,d\sigma<\infty.
	\end{equation}
	The reactive rates $\Lambda_+$ and $\Lambda_-$ in \eqref{eq:Lambda-reactive} satisfy $\Lambda_++\Lambda_-\le M_0$ by \eqref{eq:kernel-finite-rate-class}. For an ordered anisotropic kernel, the forward angular integral does not control the reverse rate, because the reaction involution interchanges the incoming and outgoing directions. Both rate bounds are included in the admissible class.
	
	Let $(Z_0,\mathcal Z_0,\mu_0)$ be a $\sigma$-finite measure space, equip $Z=Z_0\times\Omega$ with the product measure, and write $z=(z_0,x)\in Z$. Define
	\begin{equation}\label{eq:XZ}
		\|F\|_{\mathfrak X(Z)}=\sum_{i=1}^4\int_{\R^3}\esssup_{z\in Z}|F_i(z,w)|\,dw.
	\end{equation}
	Thus, $\mathfrak X(Z)=L^1_w(L^\infty_z)^4$. Equipped with this norm, $\mathfrak X(Z)$ is a Banach space. We write $\mathbb L^1=(L^1(\R^3))^4$, equipped with the norm $\|F\|_{\mathbb L^1}=\sum_{i=1}^4\|F_i\|_{L^1}$, and use the symmetric bilinear polarization $\B_K$ defined in \eqref{eq:polarization}. We write the elastic part and the two reaction parts of \eqref{eq:polarization} separately.
	
	\begin{definition}[Bilinear event operators]
		\label{def:bilinear-event-operators}
		Let $F,G\in\mathbb L^1$ and let $\Phi=(\phi_1,\phi_2,\phi_3,\phi_4)$ with $\phi_i\in L^\infty(\R^3)$. For almost every $x\in\Omega$, define the bilinear operator for the forward reaction $\mathcal B_{\mathrm{fwd}}^K(F,G)$ by
		\begin{equation}\label{eq:bilinear-forward-event}
			\begin{split}
				\sum_{i=1}^4\int_{\R^3}\mathcal B_{\mathrm{fwd},i}^K(F,G)(x,v)\phi_i(v)\,dv
				&=\frac12\int_{\R^3}\int_{\R^3}\int_{\Sph}
				K^+(x,v_1-v_2,\omega)
				\big[F_1(v_1)G_2(v_2)+G_1(v_1)F_2(v_2)\big]\\
				&\quad\times \big[\phi_3(v_3)+\phi_4(v_4)
				-\phi_1(v_1)-\phi_2(v_2)\big]
				\,d\omega\,dv_2\,dv_1,
			\end{split}
		\end{equation}
		where $v_3=\mathcal V_3^+(v_1,v_2,\omega)$,	and $v_4=\mathcal V_4^+(v_1,v_2,\omega)$. The bilinear operator for the reverse reaction $\mathcal B_{\mathrm{rev}}^K(F,G)$ is defined by
		\begin{equation}\label{eq:bilinear-reverse-event}
			\begin{split}
				\sum_{i=1}^4\int_{\R^3}\mathcal B_{\mathrm{rev},i}^K(F,G)(x,v)\phi_i(v)\,dv
				&=\frac12\int_{\R^3}\int_{\R^3}\int_{\Sph}K^-(x,v_3-v_4,\theta)\big[F_3(v_3)G_4(v_4)+G_3(v_3)F_4(v_4)\big]\\
				&\quad\times\big[\phi_1(v_1)+\phi_2(v_2)-\phi_3(v_3)-\phi_4(v_4)\big]\,d\theta\,dv_4\,dv_3,
			\end{split}
		\end{equation}
		where $v_1=\mathcal V_1^-(v_3,v_4,\theta)$ and $v_2=\mathcal V_2^-(v_3,v_4,\theta)$. Finally, define the elastic bilinear operator
		$\mathcal B_{\mathrm{el}}(F,G)$ by
		\begin{equation}\label{eq:bilinear-elastic-event}
			\begin{split}
				\sum_{i=1}^4\int_{\R^3}\mathcal B_{\mathrm{el},i}(F,G)(x,v)\phi_i(v)\,dv&=\frac14\sum_{i,j=1}^4
				\int_{\R^3}\int_{\R^3}\int_{\Sph}B_{ij}(x,v-v_*,\sigma)\big[F_i(v)G_j(v_*)+G_i(v)F_j(v_*)\big]\\
				&\quad\times \big[\phi_i(v')+\phi_j(v_*')-\phi_i(v)-\phi_j(v_*)\big]\,d\sigma\,dv_*\,dv.
			\end{split}
		\end{equation}
		Here $(v',v_*')$ are the elastic velocities defined above \eqref{eq:elastic-event}. The operators $\mathcal B_{\mathrm{fwd}}^K$, $\mathcal B_{\mathrm{rev}}^K$, and $\mathcal B_{\mathrm{el}}$ are the symmetric bilinear polarizations of the forward reaction, reverse reaction, and elastic operators in \eqref{eq:event-weak} and \eqref{eq:elastic-event}. Formula \eqref{eq:polarization} gives
		\begin{equation}\label{eq:bilinear-collision-decomposition}
			\B_K(F,G)=\mathcal B_{\mathrm{el}}(F,G)+\mathcal B_{\mathrm{fwd}}^K(F,G)+\mathcal B_{\mathrm{rev}}^K(F,G).
		\end{equation}
	\end{definition}
	
	\begin{lemma}[Bounds for the event rates]\label{lem:event-rate-estimate}
		For every $F,G\in\mathbb L^1$, the bilinear operator for the forward reaction
		satisfies
		\begin{equation}\label{eq:event-rate-forward}
			\|\mathcal B_{\mathrm{fwd}}^K(F,G)\|_{\mathbb L^1}
			\le4\Lambda_+\|F\|_{\mathbb L^1}\|G\|_{\mathbb L^1}.
		\end{equation}
		The bilinear operator for the reverse reaction satisfies
		\begin{equation}\label{eq:event-rate-reverse}
			\|\mathcal B_{\mathrm{rev}}^K(F,G)\|_{\mathbb L^1}
			\le4\Lambda_-\|F\|_{\mathbb L^1}\|G\|_{\mathbb L^1},
		\end{equation}
		and the elastic bilinear operator satisfies
		\begin{equation}\label{eq:event-rate-elastic}
			\|\mathcal B_{\mathrm{el}}(F,G)\|_{\mathbb L^1}
			\le4\Lambda_{\rm el}\|F\|_{\mathbb L^1}\|G\|_{\mathbb L^1}.
		\end{equation}
		All three estimates hold for almost every $x\in\Omega$, with constants independent of $x$.
	\end{lemma}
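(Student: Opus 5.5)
The plan is to bound the $\mathbb L^1$ norm of each bilinear operator by duality, testing against $\Phi$ with $\|\phi_i\|_{L^\infty}\le1$. Each component $\mathcal B_{\cdot,i}(F,G)(x,\cdot)$ is a signed pushforward of an event measure, as in Proposition~\ref{prop:all-reactive-gains}. It is therefore an $L^1$ density: the gain parts have densities of the form \eqref{eq:third-gain-strong}--\eqref{eq:second-reverse-gain} or \eqref{eq:elastic-gain-density}, and the loss parts are of the form $\nu[\cdot]\,F_i$. Consequently
\[
\|\mathcal B(F,G)(x,\cdot)\|_{\mathbb L^1}=\sup_{\|\phi_i\|_{L^\infty}\le1}\sum_i\int\mathcal B_i(F,G)(x,v)\phi_i(v)\,dv ,
\]
and it suffices to bound the right-hand sides of \eqref{eq:bilinear-forward-event}--\eqref{eq:bilinear-elastic-event} uniformly in such $\Phi$.

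For the forward operator I would first use that the bracket $\phi_3(v_3)+\phi_4(v_4)-\phi_1(v_1)-\phi_2(v_2)$ is bounded by $4$ in absolute value. Then I would replace $K^+(x,\cdot)$ by the majorant $\ol K^+$ from \eqref{eq:reactive-spatial-majorants}, which is valid for a.e.\ $x$. Integrating in $\omega$ with Tonelli gives at most $\Lambda_+$. What remains is
\[
\tfrac12\cdot4\Lambda_+\int\!\!\int\big(|F_1(v_1)||G_2(v_2)|+|G_1(v_1)||F_2(v_2)|\big)\,dv_1\,dv_2\le2\Lambda_+\big(\|F_1\|\|G_2\|+\|G_1\|\|F_2\|\big),
\]
and this is at most $4\Lambda_+\|F\|_{\mathbb L^1}\|G\|_{\mathbb L^1}$, because each product $\|F_i\|\|G_j\|$ is dominated by $\|F\|_{\mathbb L^1}\|G\|_{\mathbb L^1}$. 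In fact the constant $2$ would already suffice, so the stated constant holds with room to spare. The reverse estimate is identical, with $\ol K^-$ and $\Lambda_-$ in place of $\ol K^+$ and $\Lambda_+$.

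For the elastic operator the argument is the same. The bracket is again bounded by $4$. Bounding by $\ol B_{ij}$ and integrating in $\sigma$ gives $\esssup_g\int\ol B_{ij}\,d\sigma=:\beta_{ij}$. This leaves
\[
\tfrac14\cdot4\sum_{i,j}\beta_{ij}\big(\|F_i\|\|G_j\|+\|G_i\|\|F_j\|\big)\le2\sum_{i,j}\beta_{ij}\|F\|_{\mathbb L^1}\|G\|_{\mathbb L^1}=4\Lambda_{\rm el}\|F\|_{\mathbb L^1}\|G\|_{\mathbb L^1},
\]
using the definition \eqref{eq:Lambda-el}.

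The only point needing care is the a.e.-$x$ issue. The essential supremum in $x$ must be taken for fixed $(g,\omega)$, and the inequality $K^+(x,g,\omega)\le\ol K^+(g,\omega)$ must then hold for a.e.\ $x$ jointly with a.e.\ $(g,\omega)$. I would handle this through Fubini on the product null set: for a.e.\ $x$ the inequality holds for a.e.\ $(g,\omega)$, which is all that the integrals require. The density identification used in the duality step is inherited from Proposition~\ref{prop:all-reactive-gains} together with the loss formulas \eqref{eq:reactive-loss-frequencies}.
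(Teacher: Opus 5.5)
Your proposal is correct and follows essentially the same route as the paper: duality against test vectors with $\max_i\|\phi_i\|_{L^\infty}\le1$, the bound $4$ on each event bracket, the spatial majorants $\ol K^\pm$ and $\ol B_{ij}$, and the angularly integrated rates $\Lambda_\pm$, $\Lambda_{\rm el}$. Your two side remarks are correct refinements that the paper leaves implicit. The first is that the reactive constant can be lowered to $2$, since $\|F_1\|\|G_2\|+\|G_1\|\|F_2\|\le\|F\|_{\mathbb L^1}\|G\|_{\mathbb L^1}$. The second is the Tonelli justification that the majorant bound holds for a.e.\ $x$ jointly with a.e.\ $(g,\omega)$.
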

	
	\begin{proof}
		By Proposition~\ref{prop:all-reactive-gains} and the strong elastic gain representation \eqref{eq:elastic-gain-density}, the three bilinear event operators are vectors of signed $L^1$ densities. For such a vector $H=(H_1,H_2,H_3,H_4)$,
		\[
		\|H\|_{\mathbb L^1}=\sup_{\max_i\|\phi_i\|_{L^\infty}\le1}	\bigg|\sum_{i=1}^4\int_{\R^3}H_i(v)\phi_i(v)\,dv\bigg|.
		\]
		
		We first consider the operator for the forward reaction. If $\max_i\|\phi_i\|_{L^\infty}\le1$, then $\big|\phi_3(v_3)+\phi_4(v_4)-\phi_1(v_1)-\phi_2(v_2)\big|\le4$. Using \eqref{eq:bilinear-forward-event} and the bound $K^+(x,g,\omega)\le \ol K^+(g,\omega)$ for almost every $x\in\Omega$, which follows from \eqref{eq:reactive-spatial-majorants}, and then
		integrating in $\omega$, we obtain
		\begin{equation*}
			\begin{split}
				&\quad \, \bigg|\sum_{i=1}^4\int_{\R^3}\mathcal B_{\mathrm{fwd},i}^K(F,G)(x,v)\phi_i(v)\,dv\bigg|\\
				&\le2\int_{\R^3}\int_{\R^3}\int_{\Sph}\ol K^+(v_1-v_2,\omega)\big[|F_1(v_1)||G_2(v_2)|		+|G_1(v_1)||F_2(v_2)|\big]\,d\omega\,dv_2\,dv_1\\
				&\le2\Lambda_+ \big(\|F_1\|_{L^1}\|G_2\|_{L^1}+\|G_1\|_{L^1}\|F_2\|_{L^1}\big)\le4\Lambda_+\|F\|_{\mathbb L^1}\|G\|_{\mathbb L^1}.
			\end{split}
		\end{equation*}
		Taking the supremum over the test vectors proves \eqref{eq:event-rate-forward}.	
		
		For the operator for the reverse reaction, the event bracket in \eqref{eq:bilinear-reverse-event} also has absolute value at most four. The same argument gives
		\begin{align*}
			\|\mathcal B_{\mathrm{rev}}^K(F,G)\|_{\mathbb L^1}\le2\Lambda_-		\big(\|F_3\|_{L^1}\|G_4\|_{L^1}+\|G_3\|_{L^1}\|F_4\|_{L^1}\big)\le4\Lambda_-\|F\|_{\mathbb L^1}\|G\|_{\mathbb L^1},
		\end{align*}
		which proves \eqref{eq:event-rate-reverse}.
		
		For the elastic operator, set
		\[
		\Lambda_{ij}^{\rm el}=\esssup_{g\in\R^3}\int_{\Sph}\ol B_{ij}(g,\sigma)\,d\sigma.
		\]
		By definition, $\frac12\sum_{i,j=1}^4\Lambda_{ij}^{\rm el}=\Lambda_{\rm el}$. The elastic event bracket has absolute value at most four. The factor $1/4$ in \eqref{eq:bilinear-elastic-event} therefore gives
		\begin{align*}
			\|\mathcal B_{\mathrm{el}}(F,G)\|_{\mathbb L^1}\le\sum_{i,j=1}^4\Lambda_{ij}^{\rm el}
			\big(\|F_i\|_{L^1}\|G_j\|_{L^1}+\|G_i\|_{L^1}\|F_j\|_{L^1}\big)\le4\Lambda_{\rm el}
			\|F\|_{\mathbb L^1}\|G\|_{\mathbb L^1}.
		\end{align*}
		This proves \eqref{eq:event-rate-elastic}.
	\end{proof}
	
	For nonnegative $F\in\mathfrak X(Z)$, let $\Q_{K,i}^+(F)$ denote the sum of all elastic and reactive gain terms in the $i$-th component, and let $\nu_{K,i}[F]$ denote the sum of all corresponding loss frequencies. Thus,
	\begin{equation}\label{eq:gain-loss}
		\Q_{K,i}(F)=\Q_{K,i}^+(F)-\nu_{K,i}[F]F_i, 	\quad 1\le i\le4.
	\end{equation}
	We write $\Q_K^+(F)=\big(\Q_{K,1}^+(F),\ldots,\Q_{K,4}^+(F)\big)$, $\nu_K[F]=\big(\nu_{K,1}[F],\ldots,\nu_{K,4}[F]\big)$, and use
	\[
	\|\nu_K[F]\|_{L^\infty(Z\times\R^3)}=\max_{1\le i\le4}\|\nu_{K,i}[F]\|_{L^\infty(Z\times\R^3)}.
	\] 
	For $q\in\R^3$, set $(\tau_qF)_i(z,v)=F_i(z,v-q)$ and $C_0=4(\Lambda_{\rm el}+\Lambda_++\Lambda_-)$. For $K\in\mathfrak K$, we have $C_0\le4(\Lambda_{\rm el}+M_0)$. We use this upper bound when choosing $\eps$ uniformly over $\mathfrak K$.
	
	\begin{theorem}[Estimates for the collision operator]
		\label{thm:collision-estimates}
		Assume $\Lambda_{\rm el}+\Lambda_++\Lambda_-<\infty$. Then, for
		every $F,G\in\mathfrak X(Z)$,
		\begin{equation}\label{eq:bilinear-bound}
			\|\B_K(F,G)\|_{\mathfrak X(Z)}
			\le C_0
			\|F\|_{\mathfrak X(Z)}
			\|G\|_{\mathfrak X(Z)},
		\end{equation}
		and
		\begin{equation}\label{eq:Q-Lipschitz}
			\|\Q_K(F)-\Q_K(G)\|_{\mathfrak X(Z)}
			\le C_0
			\big(\|F\|_{\mathfrak X(Z)}+\|G\|_{\mathfrak X(Z)}\big)
			\|F-G\|_{\mathfrak X(Z)}.
		\end{equation}
		
		If $F,G\ge0$, then
		\begin{align}
			\|\nu_K[F]\|_{L^\infty(Z\times\R^3)}
			&\le C_0\|F\|_{\mathfrak X(Z)},
			\label{eq:frequency-bound}\\
			\|\nu_K[F]-\nu_K[G]\|_{L^\infty(Z\times\R^3)}
			&\le C_0\|F-G\|_{\mathfrak X(Z)},
			\label{eq:frequency-Lipschitz}\\
			\|\Q_K^+(F)\|_{\mathfrak X(Z)}
			&\le C_0\|F\|_{\mathfrak X(Z)}^2,
			\label{eq:gain-bound}\\
			\|\Q_K^+(F)-\Q_K^+(G)\|_{\mathfrak X(Z)}
			&\le C_0
			\big(\|F\|_{\mathfrak X(Z)}+\|G\|_{\mathfrak X(Z)}\big)
			\|F-G\|_{\mathfrak X(Z)}.
			\label{eq:gain-Lipschitz}
		\end{align}
		If, in addition, $F_3=F_4=0$, then
		\begin{equation}\label{eq:channel-identity}
			\Q_{K,3}(F)=Q_{3,+}^K(F_1,F_2).
		\end{equation}
		Moreover, $\Q_K(\tau_qF)=\tau_q\Q_K(F)$ for every $q\in\R^3$.
	\end{theorem}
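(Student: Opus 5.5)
The plan is to reduce every estimate to the pointwise-in-$z$ bounds of Lemma~\ref{lem:event-rate-estimate} and Proposition~\ref{prop:all-reactive-gains}, combined with Minkowski's integral inequality to move the essential supremum inside the velocity integrals. For $F\in\mathfrak X(Z)$, set $\bar F_i(w)=\esssup_{z}|F_i(z,w)|$, so that $\|F\|_{\mathfrak X(Z)}=\|\bar F\|_{\mathbb L^1}$.

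\textbf{Bilinear and Lipschitz bounds.} The key observation is that the event formulation is positive in the kernel. Each component of $\B_K(F,G)$ is written as a sum of gain densities, given by the strong formulas \eqref{eq:third-gain-strong}, \eqref{eq:fourth-gain-strong}, \eqref{eq:first-reverse-gain}, \eqref{eq:second-reverse-gain} and \eqref{eq:elastic-gain-density}, minus loss terms of the form $\nu[\cdot]\,(\cdot)$. In each term, the integrand at $(z,v)$ is bounded in absolute value by the same expression with $K^\pm$ and $B_{ij}$ replaced by the majorants $\ol K^\pm$ and $\ol B_{ij}$, and with $F_i(z,\cdot)$ and $G_j(z,\cdot)$ replaced by $\bar F_i$ and $\bar G_j$. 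The resulting dominating function is independent of $z$, so
\[
\esssup_z|\B_{K,i}(F,G)(z,v)|\le \mathcal D_i(\bar F,\bar G)(v),
\]
where $\mathcal D$ is the same operator built with majorant kernels and with all signs replaced by plus signs. Integrating in $v$ and repeating the counting in Lemma~\ref{lem:event-rate-estimate} (event bracket at most $4$ in absolute value, with the factors $\tfrac12$ and $\tfrac14$) gives \eqref{eq:bilinear-bound} with $C_0=4(\Lambda_{\rm el}+\Lambda_++\Lambda_-)$. Then \eqref{eq:Q-Lipschitz} follows from $\Q_K(F)-\Q_K(G)=\B_K(F-G,F+G)$, which uses the symmetry and bilinearity of $\B_K$.

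\textbf{Measurability.} One technical point needs care: the essential supremum over $z$ of an integral over $(v_*,\sigma)$ must be dominated by an integral of essential suprema. Here $K^+(x,\cdot)\le\ol K^+$ holds only for a.e.\ $x$, jointly with a.e.\ $(g,\omega)$. I would handle this by working with measurable representatives and Fubini on $Z\times\R^3\times\R^3\times\Sph$. I expect this step to be the main obstacle.

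\textbf{Frequency and gain bounds.} For $F,G\ge0$, \eqref{eq:frequency-bound} follows directly from \eqref{eq:reactive-loss-frequencies} and its elastic analogue. For instance,
\[
\nu_1^{\rm fwd}[F_2](z,v)\le\int\Big[\int_{\Sph}\ol K^+\,d\omega\Big]\bar F_2\,dv_2\le\Lambda_+\|\bar F_2\|_{L^1},
\]
and summing over the channels is comfortably within $C_0$. Since $\nu_K$ is linear in $F$, \eqref{eq:frequency-Lipschitz} follows by applying the same bound to $|F-G|$. The gain bound \eqref{eq:gain-bound} and its Lipschitz version \eqref{eq:gain-Lipschitz} are proved by the majorant argument above restricted to the gain parts, using the bilinear polarization of $\Q_K^+$ and the identity $\Q^+(F)-\Q^+(G)=\B^+(F-G,F+G)$.

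\textbf{Channel identity and translation covariance.} For \eqref{eq:channel-identity}, suppose $F_3=F_4=0$. In \eqref{eq:third-reactive-gain-loss}, the loss term $\nu_3^{\rm rev}[F_4]F_3$ vanishes. In the elastic part of the third component, every gain term involves $F_3$ paired with some $F_j$, and every loss term is multiplied by $F_3$; all of these vanish. This leaves $Q_{3,+}^K(F_1,F_2)$. Translation covariance follows from Lemma~\ref{lem:galilean} and the analogous elastic identity $v'(v+q,v_*+q)=v'(v,v_*)+q$. One substitutes $v_i\mapsto v_i+q$ in \eqref{eq:event-weak} and \eqref{eq:elastic-event}; the kernels depend only on relative velocities, which are unchanged, so testing against $\phi(\cdot)$ turns into testing against $\phi(\cdot+q)$. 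This gives $\Q_K(\tau_qF)=\tau_q\Q_K(F)$.
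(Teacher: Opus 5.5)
Your proposal is correct and follows essentially the paper's proof. Both arguments replace the inputs by the envelopes $\esssup_z|F_i(z,\cdot)|$ and the kernels by their spatial majorants, obtain a $z$-independent dominating expression, and reuse the event-rate counting of Lemma~\ref{lem:event-rate-estimate}; they then use polarization for the Lipschitz and gain bounds, the species labels for the channel identity, and Lemma~\ref{lem:galilean} for translation covariance. Your identity $\Q_K(F)-\Q_K(G)=\B_K(F-G,F+G)$ is only a regrouping of the paper's $\B_K(F-G,F)+\B_K(G,F-G)$, and the measurability point you flag is left implicit in the paper as well (it is routine, since the changes of variables $(v,g,\omega)\mapsto(v_1,v_2,\omega)$ have unit Jacobian, so the pointwise majorization holds for almost every $(z,v)$).
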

	
	\begin{proof}
		We first prove the bilinear estimate. For $1\le i\le4$, define
		\[
		F_i^\sharp(w)=\esssup_{z\in Z}|F_i(z,w)|, \quad G_i^\sharp(w)=\esssup_{z\in Z}|G_i(z,w)|.
		\]
		Then $|F_i(z,w)|\le F_i^\sharp(w)$ and $|G_i(z,w)|\le G_i^\sharp(w)$ for almost every $(z,w)$. Moreover,
		\[
		\|F^\sharp\|_{\mathbb L^1}=\|F\|_{\mathfrak X(Z)}, \quad \|G^\sharp\|_{\mathbb L^1}=\|G\|_{\mathfrak X(Z)}.
		\]
		
		Consider any gain or loss term occurring in $\B_K(F,G)$. In its explicit velocity representation, replace each input factor by the corresponding function $F_i^\sharp$ or $G_i^\sharp$, and use the spatial supremum bounds
		\[
		K^+(x,g,\omega)\le\ol K^+(g,\omega),\quad K^-(x,h,\theta)\le\ol K^-(h,\theta),
		\]
		together with $B_{ij}(x,g,\sigma)\le\ol B_{ij}(g,\sigma)$. After taking the essential supremum over $z$, the absolute value of the original term is bounded by the resulting nonnegative expression, which is
		independent of $z$. Summing these bounds over the four components, integrating in the output velocity, and applying the calculations in Lemma~\ref{lem:event-rate-estimate} with inputs $F^\sharp$ and $G^\sharp$ give
		\[
		\|\B_K(F,G)\|_{\mathfrak X(Z)}\le C_0 \|F^\sharp\|_{\mathbb L^1}\|G^\sharp\|_{\mathbb L^1}
		=C_0\|F\|_{\mathfrak X(Z)}\|G\|_{\mathfrak X(Z)}.
		\]
		This proves \eqref{eq:bilinear-bound}. Since $\Q_K(F)=\B_K(F,F)$ and $\B_K$ is symmetric and bilinear,
		\begin{equation}\label{eq:quadratic-difference}
			\Q_K(F)-\Q_K(G)=\B_K(F-G,F)+\B_K(G,F-G).
		\end{equation}
		Applying \eqref{eq:bilinear-bound} to the two terms on the right-hand side proves \eqref{eq:Q-Lipschitz}.
		
		We next estimate the loss frequencies. For example, the loss frequency in the first component due to the forward reaction is
		\[
		\nu_1^{\rm fwd}[F_2](z,v)=\int_{\R^3}\int_{\Sph}K^+(x,v-v_2,\omega)F_2(z,v_2)\,d\omega\,dv_2,
		\]
		where $x$ is the spatial component of $z$. By the definitions of $\ol K^+$ and $\Lambda_+$,
		\[
		\|\nu_1^{\rm fwd}[F_2]\|_{L^\infty(Z\times\R^3)}\le \Lambda_+\int_{\R^3}F_2^\sharp(v_2)\,dv_2
		\le \Lambda_+\|F\|_{\mathfrak X(Z)}.
		\]
		The remaining forward and reverse reactive loss frequencies in \eqref{eq:reactive-loss-frequencies} are estimated in the same way. The elastic loss frequencies satisfy the analogous estimate with $2\Lambda_{\rm el}$ after summing over the collision partner species. Since $C_0=4(\Lambda_{\rm el}+\Lambda_++\Lambda_-)$, adding all contributions proves \eqref{eq:frequency-bound}.
		
		Each loss frequency depends linearly on the distributions of the collision partners. Applying the same estimates to $F-G$ therefore gives \eqref{eq:frequency-Lipschitz}.
		
		Let $\B_K^+$ denote the symmetric bilinear operator formed by the gain terms of $\B_K$. Repeating the first part of the proof without the loss terms gives
		\[
		\|\B_K^+(F,G)\|_{\mathfrak X(Z)}\le C_0\|F\|_{\mathfrak X(Z)}\|G\|_{\mathfrak X(Z)}.
		\]
		For nonnegative $F$, one has $\Q_K^+(F)=\B_K^+(F,F)$, which proves \eqref{eq:gain-bound}. Similarly,
		\[
		\Q_K^+(F)-\Q_K^+(G)=\B_K^+(F-G,F)+\B_K^+(G,F-G),
		\]
		and \eqref{eq:gain-Lipschitz} follows from the bilinear gain estimate.
		
		Suppose now that $F_3=F_4=0$. Since elastic collisions preserve the species labels, every elastic term in the third component contains a factor $F_3$ and therefore vanishes. The reverse reactive contribution also vanishes, since it contains the product $F_3F_4$. The only remaining term in the third component is the forward gain produced by the reaction $1+2\to3+4$. Hence, $\Q_{K,3}(F)=Q_{3,+}^K(F_1,F_2)$, which proves \eqref{eq:channel-identity}.
		
		Finally, fix $q\in\R^3$. A common translation of all velocity variables leaves every relative velocity unchanged. The reaction maps commute with this translation by Lemma~\ref{lem:galilean}, and the elastic post-collisional maps have the same property. Translating all velocity integration variables
		by $q$ in the event formulas gives
		\[
		\Q_{K,i}(\tau_qF)(z,v)=\Q_{K,i}(F)(z,v-q)=(\tau_q\Q_K(F))_i(z,v).
		\]
		Thus, $\Q_K(\tau_qF)=\tau_q\Q_K(F)$.
	\end{proof}
	
	\begin{remark}\label{rem:finite-rate-class}
		The forward theorem assumes bounded angularly integrated rates. For $\Delta E=0$, this includes bounded angular-cutoff Maxwell laws. For $\Delta E>0$, the reverse kernel must also satisfy the condition near the threshold in Definition~\ref{def:kernel-class}; \eqref{eq:threshold-compatible-example} gives an example. The recovery proof uses only relative velocities in $\mathscr G^\sharp$, while the kernel may be nonzero outside this set. A cutoff law for hard potentials with rate growing like $|g|^\gamma$ requires estimates in a weighted space for the gain and loss terms. Such estimates are not proved here.
	\end{remark}

	\section{The forward problem for large velocities and the boundary flux}\label{sec:scaled-forward}
	
	\subsection{Scaling and well-posedness}\label{subsec:scaling-wp}
	
	Recall that $n(x)$ denotes the outward unit normal to $\p\Omega$ at $x\in\p\Omega$. Throughout this section, $\Omega$ is the bounded $C^3$ strictly convex domain fixed in Theorem~\ref{thm:main}.
	
	For $e\in\Sph$, let $P_{e^\perp}$ be the orthogonal projection onto $e^\perp$, and set $\Omega_e=P_{e^\perp}(\Omega)$. For $y\in\Omega_e$, define
	\[
	I(y,e)=\{\sigma\in\R:\ y+\sigma e\in\Omega\}.
	\]
	Since $\Omega$ is bounded, open, and convex, $I(y,e)$ is a nonempty bounded open interval. We write $I(y,e)=\big(\sigma_-(y,e),\sigma_+(y,e)\big)$, where $\sigma_-(y,e)=\inf I(y,e)$ and
	$\sigma_+(y,e)=\sup I(y,e)$. Equivalently,
	\begin{equation}\label{eq:chord}
		\Omega\cap(y+\R e)=\{y+\sigma e:\ \sigma_-(y,e)<\sigma<\sigma_+(y,e)\}.
	\end{equation}
	The corresponding entry and exit points are $x_\pm(y,e)=y+\sigma_\pm(y,e)e$, and the chord length is
	$L(y,e)=\sigma_+(y,e)-\sigma_-(y,e)$.
	
	\begin{lemma}[Chord geometry]\label{lem:chord-geometry}
		Fix $e\in\Sph$. For every $y\in\Omega_e$,
		\[
		n(x_-(y,e))\cdot e<0<n(x_+(y,e))\cdot e.
		\]
		If $U\Subset\Omega_e$, then the maps $y\mapsto\sigma_\pm(y,e)$, $y\mapsto x_\pm(y,e)$, and
		$y\mapsto L(y,e)$ are $C^3$ on $U$. Moreover, there exists $\kappa_U>0$ such that
		\begin{equation}\label{eq:local-nongrazing}
			-n(x_-(y,e))\cdot e\ge\kappa_U, \quad n(x_+(y,e))\cdot e\ge\kappa_U
		\end{equation}
		for every $y\in\ol U$. On either boundary sheet, the change of variables $y\mapsto x_\pm(y,e)$ satisfies
		\[
		dy=|n(x_\pm(y,e))\cdot e|\, dS_{x_\pm(y,e)}.
		\]
	\end{lemma}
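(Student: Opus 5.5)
The plan is to describe $\p\Omega$ locally as a $C^3$ graph over $e^\perp$ and apply the implicit function theorem, with strict convexity supplying transversality.

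\textbf{Transversality.} Fix $y\in\Omega_e$. Since $\Omega$ is open and convex, $I(y,e)$ is a nonempty bounded open interval, so $x_\pm(y,e)\in\p\Omega$ are well defined.

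Suppose, for contradiction, that $n(x_+)\cdot e=0$ at $x_+=x_+(y,e)$. Then the line $y+\R e$ lies in the tangent plane of $\p\Omega$ at $x_+$. For a convex body, $\Omega$ lies in the open half-space $\{(x-x_+)\cdot n(x_+)<0\}$. Hence that tangent plane contains no point of $\Omega$. This contradicts the fact that the line meets $\Omega$ on the nonempty chord.

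Therefore $n(x_\pm)\cdot e\ne0$. The sign is fixed as follows: $x_++te\in\Omega$ for small $t<0$, and $x_++te\notin\ol\Omega$ for small $t>0$. This gives $n(x_+)\cdot e\ge0$, hence $>0$. The same argument gives $n(x_-)\cdot e<0$.

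\textbf{Regularity.} Let $\rho$ be a $C^3$ defining function with $\Omega=\{\rho<0\}$ and $\nabla\rho=|\nabla\rho|\,n$ on $\p\Omega$. Consider $\Phi(y,\sigma)=\rho(y+\sigma e)$ on $e^\perp\times\R$. At $(y,\sigma_\pm(y,e))$ we have $\Phi=0$ and
\[
\p_\sigma\Phi=|\nabla\rho|\,n(x_\pm)\cdot e\ne0 .
\]
The implicit function theorem gives a $C^3$ local solution $\tilde\sigma(y')$ near $y$. It remains to identify $\tilde\sigma$ with $\sigma_\pm(y',e)$.

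By convexity, each line meets $\p\Omega$ in exactly two points when it meets $\Omega$. Also $\sigma_\pm$ are continuous on $\Omega_e$: $\sigma_+$ is lower semicontinuous because $\Omega$ is open, and upper semicontinuous because $\ol\Omega$ is compact and transversality rules out tangential limits. So the branch through $\sigma_+(y,e)$ is $\sigma_+$ nearby. Thus $\sigma_\pm$, $x_\pm=y+\sigma_\pm e$, and $L=\sigma_+-\sigma_-$ are $C^3$ on $\Omega_e$, and in particular on $U$.

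\textbf{Uniform bound.} The function $y\mapsto n(x_\pm(y,e))\cdot e$ is continuous on $\Omega_e$ and nonvanishing. Since $\ol U$ is a compact subset of $\Omega_e$, its minimum modulus on $\ol U$ is positive; this is $\kappa_U$.

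\textbf{Area formula.} The projection $P_{e^\perp}$ restricted to the sheet $\{x_\pm(y,e):y\in U\}$ is a $C^3$ diffeomorphism onto $U$, with inverse $y\mapsto x_\pm(y,e)$. Its surface Jacobian is the cosine of the angle between the normal $n$ and the projection direction $e$, i.e. $|n\cdot e|$. This gives $dy=|n(x_\pm)\cdot e|\,dS_{x_\pm}$.

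Equivalently, the sheet is the graph $\sigma=\sigma_\pm(y)$ over $e^\perp$, so
\[
dS=\sqrt{1+|\nabla\sigma_\pm|^2}\,dy,\qquad |n\cdot e|=\bigl(1+|\nabla\sigma_\pm|^2\bigr)^{-1/2},
\]
where $n=\pm(e-\nabla\sigma_\pm)/\sqrt{1+|\nabla\sigma_\pm|^2}$ follows from differentiating $\rho(y+\sigma_\pm(y)e)=0$.

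The main obstacle is the transversality step. It needs convexity to be used correctly: $\Omega$ must lie strictly on one side of every tangent plane. For an open convex set this holds, since a tangent plane is a supporting plane and cannot meet the open set. After that, the rest of the argument is routine.
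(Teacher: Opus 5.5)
Your proposal is correct and follows essentially the same route as the paper. The steps match: the tangent plane as a supporting hyperplane gives transversality, the implicit function theorem applied to $\rho(y+\sigma e)$ (with convexity identifying the local roots with $\sigma_\pm$) gives $C^3$ regularity, compactness of $\ol U$ gives $\kappa_U$, and the parametrization $y\mapsto y+\sigma_\pm(y,e)e$ gives the area formula. Your graph identity $|n\cdot e|=(1+|\nabla\sigma_\pm|^2)^{-1/2}$ is equivalent to the paper's $(\p_{y_1}X_\pm\times\p_{y_2}X_\pm)\cdot e=1$, and your semicontinuity argument just spells out the branch identification the paper states in one line; there only convexity and openness are needed, not strict convexity or transversality.
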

	
	\begin{proof}
		Fix $y\in\Omega_e$ and abbreviate $x_\pm=x_\pm(y,e)$. We first show that the intersections are transversal. Let $H_+=\{z\in\R^3:\ (z-x_+)\cdot n(x_+)=0\}$ be the tangent hyperplane at $x_+$. Since $\Omega$ is convex, $H_+$ is a supporting hyperplane and therefore contains no interior point of $\Omega$. If $n(x_+)\cdot e=0$, then the line $x_++\R e$ is contained in $H_+$. This is impossible because $x_+-te\in\Omega$ for all sufficiently small $t>0$. Hence, $n(x_+)\cdot e\ne0$. The same argument gives $n(x_-)\cdot e\ne0$.
		
		Choose a $C^3$ defining function $\rho$ near $\p\Omega$ such that $\Omega=\{\rho<0\}$, $\p\Omega=\{\rho=0\}$, and $n=\nabla\rho/|\nabla\rho|$ on $\p\Omega$. Since $x_-+te\in\Omega$ and $x_+-te\in\Omega$ for small $t>0$, $\rho(x_-+te)=t\nabla\rho(x_-)\cdot e+o(t)<0$, and $\rho(x_+-te)=
		-t\nabla\rho(x_+)\cdot e+o(t)<0$. Using the nonvanishing established above, we obtain $\nabla\rho(x_-)\cdot e<0<\nabla\rho(x_+)\cdot e$, and hence, the asserted signs for the outward normal.
		
		We next fix $e$ and regard $y$ as a variable in the two-dimensional plane $e^\perp$. Set $\mathcal R(y,\sigma)=\rho(y+\sigma e)$. At $\sigma=\sigma_\pm(y,e)$,
		\[
		\partial_\sigma\mathcal R(y,\sigma_\pm(y,e))=\nabla\rho(x_\pm(y,e))\cdot e\ne0.
		\]
		The implicit function theorem therefore gives local $C^3$ representations of the two roots as functions of $y$. Convexity identifies these roots uniquely with the two endpoints of $I(y,e)$, so the local representations agree on overlaps. Consequently, $\sigma_\pm(\cdot,e)$ are $C^3$ on $U$, and the same is true of $x_\pm(\cdot,e)$ and $L(\cdot,e)$.
		
		The two functions $-n(x_-(y,e))\cdot e$ and $n(x_+(y,e))\cdot e$ are continuous and strictly positive on the compact set $\ol U$. Thus,
		\[
		\kappa_U=\min_{y\in\ol U}\min\big\{-n(x_-(y,e))\cdot e,\, n(x_+(y,e))\cdot e\big\}>0,
		\]
		which proves \eqref{eq:local-nongrazing}.
		
		It remains to prove the identity for the surface measure. Choose an oriented orthonormal basis $b_1,b_2$ of $e^\perp$ such that $b_1\times b_2=e$, write $y=y_1b_1+y_2b_2$, and parametrize the two sheets by
		$X_\pm(y)=y+\sigma_\pm(y,e)e$. Then $\partial_{y_j}X_\pm=b_j+\partial_{y_j}\sigma_\pm(y,e)e$, $j=1,2$. With $N_\pm=\partial_{y_1}X_\pm\times\partial_{y_2}X_\pm$, one has $N_\pm\cdot e=1$. At the outgoing sheet, $N_+=|N_+|n(X_+)$, whereas at the incoming sheet, $N_-=-|N_-|n(X_-)$. Since
		$dS_{X_\pm(y)}=|N_\pm(y)|\,dy$, it follows that 
		\[
		n(X_+(y))\cdot e\,dS_{X_+(y)}=dy=-n(X_-(y))\cdot e\,dS_{X_-(y)}.
		\]
		This is the asserted change of variables.
	\end{proof}
	
	For every $U\Subset\Omega_e$, the entry and exit points of the chords parametrized by $y\in\ol U$ are uniformly transverse to $\p\Omega$.
	
	\para{Physical and scaled initial-boundary value problems} Fix $e\in\Sph$, set $\eps=R^{-1}$, and write $s=Rt$ and $v=Re+w$. Define
	\begin{equation}\label{eq:scaled-H}
		H_i^\eps(s,x,w)=F_i^R(\eps s,x,Re+w),\quad R=\eps^{-1},
	\end{equation}
	and set
	\begin{equation}\label{eq:def_a_eps}
		a_\eps(w)=e+\eps w.
	\end{equation}
	The chain rule gives $(\p_s+a_\eps(w)\cdot\nabla_x)H_i^\eps=\eps\Q_{K,i}(F^R)(\eps s,x,Re+w)$. Translation invariance from Theorem~\ref{thm:collision-estimates} identifies the collision term with $\Q_{K,i}(H^\eps)(s,x,w)$. We obtain
	\begin{equation}\label{eq:scaled-system}
		\big[\p_s+a_\eps(w)\cdot\nabla_x\big]H_i^\eps=\eps\Q_{K,i}(H^\eps),\quad 1\le i\le4,
	\end{equation}
	on the fixed interval $0<s<T$. The same calculation holds for mild solutions by the corresponding change of characteristic variables.
	
	For $i=1,2$, the scaled incoming data are
	\begin{equation}\label{eq:physical-scaled-inflow}
		g_i^\eps(s,x,w)=G_{i,R}(\eps s,x,Re+w).
	\end{equation}
	Since $Re+w=Ra_\eps(w)$, the incoming condition is $n(x)\cdot a_\eps(w)<0$. Write $\Sigma_{-,\eps}=\{(s,x,w):0<s<T,\ x\in\p\Omega,\ n(x)\cdot a_\eps(w)<0\}$. The initial data are zero and the incoming vector is $g^\eps=(g_1^\eps,g_2^\eps,0,0)$.
	
	For $x\in\Omega$ and $a\in\R^3$, define
	\begin{equation}\label{eq:backward-exit-time}
		\tau_-(x,a)=\inf\{r>0:x-ra\notin\Omega\},
	\end{equation}
	where $\inf\varnothing=+\infty$. This time is finite for $a\ne0$, while $\tau_-(x,0)=+\infty$. Set
	\begin{equation}\label{eq:def_tau_minus_eps}
		\tau_{-,\eps}(x,w)=\tau_-(x,a_\eps(w)).
	\end{equation}
	For a scalar source, or componentwise for a vector source, let
	\begin{equation}\label{eq:G-eps}
		(\G_\eps f)(s,x,w)=\int_0^{\min\{s,\tau_{-,\eps}(x,w)\}}f(s-r,x-ra_\eps(w),w)\,dr.
	\end{equation}
	The integral stops at the initial plane or the incoming boundary, whichever is reached first.
	
	We use the Banach space of measurable functions with norm
	\begin{equation}\label{eq:X-T-norm}
		\|F\|_{\mathfrak X_T}=\sum_{i=1}^4\int_{\R^3}\esssup_{(s,x)\in(0,T)\times\Omega}|F_i(s,x,w)|\,dw.
	\end{equation}
	For scalar functions, the same notation omits the species sum. This is the mixed-norm space in \eqref{eq:XZ} with $Z_0=(0,T)$. The corresponding incoming norm is
	\begin{equation}\label{eq:incoming-norm}
		\|g^\eps\|_{\mathfrak X_{-,\eps}}=\sum_{i=1}^4\int_{\R^3}\esssup_{\substack{0<s<T,\ x\in\p\Omega\\n(x)\cdot a_\eps(w)<0}}|g_i^\eps(s,x,w)|\,dw.
	\end{equation}
	The interior and boundary essential suprema use $ds\,dx$ and $ds\,dS_x$, respectively. The supremum over an empty incoming set is zero. This occurs only at $a_\eps(w)=0$, a null set in velocity. Let $U^\eps$ be the free transport solution with these data.
	
	\begin{lemma}[Free transport and Duhamel bounds]\label{lem:free-Duhamel-bounds}
		For every $f\in\mathfrak X_T$, the Duhamel operator in \eqref{eq:G-eps} satisfies
		\begin{equation}\label{eq:G-eps-bound}
			\|\G_\eps f\|_{\mathfrak X_T}\le T\|f\|_{\mathfrak X_T}.
		\end{equation}
		Let $U^\eps$ be the free transport solution with zero initial data and incoming boundary value $g^\eps$. Then
		\begin{equation}\label{eq:free-incoming-bound}
			\|U^\eps\|_{\mathfrak X_T}\le \|g^\eps\|_{\mathfrak X_{-,\eps}}.
		\end{equation}
		If the third and fourth incoming data vanish, then $U_3^\eps=U_4^\eps=0$.
	\end{lemma}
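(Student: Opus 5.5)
The plan is to argue pointwise in the velocity variable $w$. For fixed $w$, both the Duhamel operator and the free transport solution act only along the straight characteristic $r\mapsto(s-r,x-ra_\eps(w))$. Consequently, the essential supremum over $(s,x)$ can be controlled for each $w$ separately, and the estimate then integrates in $w$. We exclude the null set $\{a_\eps(w)=0\}$, where $\tau_{-,\eps}=+\infty$ and the data set is empty; there the definition \eqref{eq:G-eps} still gives an integral over $r\in(0,s)$, so this set causes no difficulty.

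For \eqref{eq:G-eps-bound}, set $f^\sharp(w)=\esssup_{(s',x')\in(0,T)\times\Omega}|f(s',x',w)|$. For every $r$ in the integration range $(0,\min\{s,\tau_{-,\eps}(x,w)\})$, the point $(s-r,x-ra_\eps(w))$ lies in $(0,T)\times\Omega$. This uses the definition \eqref{eq:backward-exit-time}, convexity of $\Omega$ (the backward ray stays in $\Omega$ until exit), and $s<T$. It follows that $|\G_\eps f(s,x,w)|\le s\,f^\sharp(w)\le Tf^\sharp(w)$.

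The one technical point is that $f^\sharp$ bounds $f$ only almost everywhere, while we evaluate $f$ along a single line. To handle this, I would use the change of variables $(s,x,r)\mapsto(s-r,x-ra_\eps(w),r)$, which for fixed $w$ and $r$ is a translation in $(s,x)$ and hence preserves Lebesgue measure. By Fubini, for almost every $(s,x)$ the exceptional null set of $f(\cdot,\cdot,w)$ is met only on an $r$-null set, so the pointwise bound holds for a.e. $(s,x)$. Joint measurability in $w$ is handled the same way. Taking the essential supremum over $(s,x)$ and then integrating in $w$ gives $\|\G_\eps f\|_{\mathfrak X_T}\le T\|f\|_{\mathfrak X_T}$, componentwise and then summed over species.

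For \eqref{eq:free-incoming-bound}, the free solution is $U^\eps_i(s,x,w)=g^\eps_i(s-\tau_{-,\eps},\,x-\tau_{-,\eps}a_\eps(w),\,w)$ when $\tau_{-,\eps}(x,w)<s$, and $0$ otherwise, since the initial data vanish. The backward exit point $x-\tau_{-,\eps}a_\eps$ lies on $\p\Omega$ with $n\cdot a_\eps\le0$, and the time $s-\tau_{-,\eps}$ lies in $(0,T)$. Hence $|U^\eps_i(s,x,w)|$ is bounded by the boundary supremum appearing in \eqref{eq:incoming-norm}, after excluding the tangential set $n\cdot a_\eps=0$.

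The main obstacle is again the null-set issue. The boundary essential supremum is taken with respect to $ds\,dS_x$, so we must check that the map $(s,x)\mapsto$ (entry time, entry point) sends null sets of the incoming set $\Sigma_{-,\eps}$ to null sets of $(s,x)$, and that tangential points form a null set of $(s,x)$. To do this, I would parametrize $(s,x)$ by (entry point $\xi\in\p\Omega$ with $n(\xi)\cdot a_\eps<0$, entry time $s'$, flight time $r$). The Jacobian of this parametrization is $|n(\xi)\cdot a_\eps|\,dS_\xi\,ds'\,dr$, which is the analogue of the identity $dy=|n\cdot e|\,dS$ in Lemma~\ref{lem:chord-geometry}. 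Its positivity off the tangential set gives absolute continuity, and the tangential set has zero volume because, for fixed $a_\eps(w)\ne0$, the lines that touch the boundary tangentially cover a set of measure zero. Integrating in $w$ then yields \eqref{eq:free-incoming-bound}.

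Finally, if $g^\eps_3=g^\eps_4=0$, the representation formula above gives $U^\eps_3=U^\eps_4=0$ directly.
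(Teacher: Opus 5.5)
Your proposal is correct and follows essentially the same route as the paper. For the Duhamel bound it uses translation invariance in $(s,x)$ plus Fubini; for the free solution it uses the representation formula together with the flux Jacobian $|n(b)\cdot a_\eps(w)|$ of the map (entry point, entry time, flight time) $\mapsto(s,x)$. Your separate treatment of tangential exit points is harmless but unnecessary: for $x\in\Omega$, convexity already forces $n(b)\cdot a_\eps(w)<0$ at the backward exit point $b=x-\tau_{-,\eps}(x,w)a_\eps(w)$.
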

	
	\begin{proof}
		For fixed $r$ and $w$, the map $(s,x)\mapsto(s-r,x-ra_\eps(w))$ is a translation. Fubini's theorem shows that \eqref{eq:G-eps} is well-defined for almost every $(s,x,w)$ and is independent of the representative of $f$. The interval has length at most $T$, so its velocity envelope is bounded by $T$ times that of $f$. This proves \eqref{eq:G-eps-bound}.
		
		Write $\tau=\tau_{-,\eps}(x,w)$. The free solution is
		\begin{equation}\label{eq:free-transport-representation}
			U_i^\eps(s,x,w)=
			\begin{cases}
				g_i^\eps(s-\tau,x-\tau a_\eps(w),w),&s>\tau,\\
				0,&s\le\tau.
			\end{cases}
		\end{equation}
		For $a_\eps(w)\ne0$, the change from an incoming point $b$ and a flight parameter $r$ to $x=b+ra_\eps(w)$ has Jacobian $|n(b)\cdot a_\eps(w)|$. It is positive on the incoming set. Together with the time translation, this shows that the pullback of boundary data is independent of its representative for almost every interior point. Taking velocity envelopes in \eqref{eq:free-transport-representation} proves \eqref{eq:free-incoming-bound}. The last assertion follows from the zero initial and incoming data of the product species.
	\end{proof}
	
	The scaled transport equation \eqref{eq:scaled-system}, together with the zero initial data and the prescribed incoming boundary values, forms the initial-boundary value problem
	\begin{equation}\label{eq:scaled-ibvp}
		\begin{cases}
			\big[\p_s+a_\eps(w)\cdot\nabla_x\big]H^\eps
			=\eps\Q_K(H^\eps)
			&\text{in }(0,T)\times\Omega\times\R^3,\\
			H^\eps(0,x,w)=0
			&\text{for }(x,w)\in\Omega\times\R^3,\\
			H^\eps=g^\eps
			&\text{on }\Sigma_{-,\eps}.
		\end{cases}
	\end{equation}
	The free term $U^\eps$ in \eqref{eq:free-transport-representation} satisfies the initial and incoming boundary conditions in \eqref{eq:scaled-ibvp}. We call $H^\eps\in\mathfrak X_T$ a mild solution of \eqref{eq:scaled-ibvp} if
	\begin{equation}\label{eq:mild-equation}
		H^\eps=U^\eps+\eps\G_\eps\Q_K(H^\eps)
	\end{equation}
	for almost every $(s,x,w)\in(0,T)\times\Omega\times\R^3$. Here $U^\eps$ is the free solution with zero initial data and incoming value $g^\eps$, and $\G_\eps$ is defined in \eqref{eq:G-eps}. Formula \eqref{eq:mild-equation} follows by integrating \eqref{eq:scaled-ibvp} along the backward characteristics up to the initial plane or the incoming boundary.
	
	\begin{theorem}[Well-posedness for incoming data of fixed size]\label{thm:forward-wp}
		Assume the hypotheses of Theorem~\ref{thm:collision-estimates}. Let $g^\eps=(g_1^\eps,g_2^\eps,0,0)\ge0$, and suppose that $\|g^\eps\|_{\mathfrak X_{-,\eps}}\le M_{\rm in}$ uniformly in $\eps$. If $\eps TC_0M_{\rm in}\le1/16$, then the scaled initial-boundary value problem \eqref{eq:scaled-ibvp} admits a unique mild solution $H^\eps\in\mathfrak X_T$. This solution is nonnegative, satisfies $\|H^\eps\|_{\mathfrak X_T}\le2M_{\rm in}$, and obeys
		\begin{equation}\label{eq:H-U}
			\|H^\eps-U^\eps\|_{\mathfrak X_T}\le4\eps TC_0M_{\rm in}^2.
		\end{equation}
		For each fixed finite bound $M_{\rm in}$ on the incoming norm, the theorem applies when $\eps$ is sufficiently small. We use this solution to define the boundary response.
	\end{theorem}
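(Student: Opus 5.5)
We will obtain $H^\eps$ as the fixed point of the map $\Phi(H)=U^\eps+\eps\G_\eps\Q_K(H)$ on the closed ball $\mathcal X_{2M}=\{H\in\mathfrak X_T:\|H\|_{\mathfrak X_T}\le 2M_{\rm in}\}$. Lemma~\ref{lem:free-Duhamel-bounds} gives $\|U^\eps\|_{\mathfrak X_T}\le M_{\rm in}$ and $\|\G_\eps f\|_{\mathfrak X_T}\le T\|f\|_{\mathfrak X_T}$. Theorem~\ref{thm:collision-estimates}, applied with $Z=(0,T)\times\Omega$, gives $\|\Q_K(H)\|\le C_0\|H\|^2$ and the Lipschitz bound \eqref{eq:Q-Lipschitz}. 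Consequently
\[
\|\Phi(H)\|\le M_{\rm in}+4\eps TC_0M_{\rm in}^2\le \tfrac54M_{\rm in},
\qquad
\|\Phi(H)-\Phi(G)\|\le 4\eps TC_0M_{\rm in}\|H-G\|\le\tfrac14\|H-G\|,
\]
using $\eps TC_0M_{\rm in}\le1/16$. Banach's fixed point theorem then gives a unique mild solution in the ball. The estimate \eqref{eq:H-U} follows from $H^\eps-U^\eps=\eps\G_\eps\Q_K(H^\eps)$ and the same bounds: $\eps T C_0(2M_{\rm in})^2=4\eps TC_0M_{\rm in}^2$.

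\textbf{Uniqueness in all of $\mathfrak X_T$.} A mild solution need not lie in the ball a priori. We will handle this by a continuity argument on shorter time intervals. For any mild solution $H$, the function $S\mapsto\|H\|_{\mathfrak X_{S}}$ (the norm with $s<S$) is nondecreasing. It equals zero in the limit as $S\downarrow0$, since the velocity envelope of the data restricted to short times tends to zero by dominated convergence, and the same holds for the Duhamel term. The bootstrap bound $\|H\|_{\mathfrak X_S}\le M_{\rm in}+\eps SC_0\|H\|_{\mathfrak X_S}^2$ then confines the norm below the smaller root, which is less than $2M_{\rm in}$, so $H$ lies in the ball.

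The one delicate point is continuity of $S\mapsto\|H\|_{\mathfrak X_S}$. An esssup norm is only lower semicontinuous, but monotonicity together with the quadratic inequality suffices: the set of admissible values has a gap between the two roots, so the norm cannot jump across it. If this becomes awkward, the alternative is a Gronwall-type argument in $S$ applied to the difference of two solutions, both of which are finite in $\mathfrak X_T$.

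\textbf{Nonnegativity.} The main obstacle is nonnegativity, because $\Q_K$ has loss terms. We will use the standard gain--loss reformulation \eqref{eq:gain-loss}. Fix $\lambda\ge C_0\cdot2M_{\rm in}$, so that $\nu_K[H]\le\lambda$ on the ball for nonnegative $H$ by \eqref{eq:frequency-bound}, and rewrite the equation as
\[
(\p_s+a_\eps\cdot\nabla_x+\eps\lambda)H_i=\eps\big[\Q^+_{K,i}(H)+(\lambda-\nu_{K,i}[H])H_i\big].
\]
This yields a mild formulation with the damped free solution $e^{-\eps\lambda r}$ along characteristics and a right-hand side that is monotone (nonnegative) in $H\ge0$. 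Iterating from $H^{(0)}=0$ inside the nonnegative part of the ball produces a Cauchy sequence of nonnegative functions by \eqref{eq:gain-Lipschitz} and \eqref{eq:frequency-Lipschitz}. The required contraction constant is $O(\eps TC_0M_{\rm in})$, which is at most $1/4$ under the smallness hypothesis, again using $\nu_K\le\lambda$ via the $1/16$ margin.

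The limit solves the damped formulation. This formulation is equivalent to \eqref{eq:mild-equation}, since both come from integrating along the same characteristics and the exponential factor reconciles them, which can be verified by a Duhamel identity. By uniqueness the limit equals $H^\eps$, so $H^\eps\ge0$.
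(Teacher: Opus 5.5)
Your existence argument in $\mathbb B_{2M_{\rm in}}$, the bound \eqref{eq:H-U}, and the use of uniqueness to identify the nonnegative fixed point with $H^\eps$ match the paper. The gap is in the bootstrap you lead with for uniqueness in all of $\mathfrak X_T$.

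Write $N(S)=\|H\|_{\mathfrak X_S}$, and let $r_-(S)<r_+(S)$ be the roots of $\eps SC_0r^2-r+M_{\rm in}=0$. There are two problems.

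\emph{The starting point.} Your claim $N(S)\to0$ as $S\downarrow0$ is false in general. Dominated convergence does not control an essential supremum over a shrinking set. For incoming data independent of $s$, characteristics entering at small positive times already carry the full boundary values, so $\|U^\eps\|_{\mathfrak X_S}$ does not decrease as $S\downarrow0$. This part is harmless: $N(S)\le\|H\|_{\mathfrak X_T}<\infty$, while $r_+(S)\to\infty$.

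\emph{The continuity step.} This is the real problem. Monotonicity, lower semicontinuity and a gap between the roots do not stop $N$ from jumping across the gap. $N$ is only left-continuous, and right-jumps genuinely occur: data $\mathbf 1_{s>s_1}\psi(w)$ make $\|U^\eps\|_{\mathfrak X_S}$ jump from $0$ to $\|\psi\|_{L^1}$ at $S=s_1$. The inequality $N(S)\le M_{\rm in}+\eps SC_0N(S)^2$ is equally consistent with $N(S)\ge r_+(S)$ after such a jump.

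What is missing is a bound on the jump size. Let $S^*=\sup\{S:N(S)\le r_-(S)\}$; left-continuity gives $N(S^*)\le r_-(S^*)$. Splitting the Duhamel integral at $S^*$ and using finiteness of $\|H\|_{\mathfrak X_T}$ gives
\[
N(S)\le M_{\rm in}+\eps SC_0N(S^*)^2+\eps(S-S^*)C_0\|H\|_{\mathfrak X_T}^2,\qquad S>S^*.
\]
Hence $\limsup_{S\downarrow S^*}N(S)\le r_-(S^*)<r_+(S^*)$, which contradicts the maximality of $S^*$ unless $S^*=T$.

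The simpler fix is the fallback you mention, which is exactly the paper's argument. If two mild solutions $H,G\in\mathfrak X_T$ agree up to time $s_1$, the Duhamel integral for $D=H-G$ only sees the window $(s_1,s_1+h)$. This gives \eqref{eq:local-mild-uniqueness}, and one steps forward with a fixed $h$. A Gronwall inequality for $S\mapsto\|D\|_{\mathfrak X_S}$ works equally well. In every version, what makes the short-window factor small is the finiteness of $\|H\|_{\mathfrak X_T}$ and $\|G\|_{\mathfrak X_T}$, not membership in the ball.

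Your nonnegativity argument is correct but takes a different route from the paper's.
\begin{itemize}
\item The paper iterates with the exact nonlinear attenuation $E_{i,H}$ built from $\nu_{K,i}[H]$. It therefore has to control differences of exponentials, see \eqref{eq:attenuation-difference}.
\item You damp with the constant $\lambda=2C_0M_{\rm in}$ and move $(\lambda-\nu_{K,i}[H])H_i\ge0$ into the source. This avoids the exponential differences. The equivalence with \eqref{eq:mild-equation} becomes a linear identity along almost every characteristic, since the added terms $\pm\eps\lambda H_i$ cancel.
\end{itemize}
The cost of your route is an extra term $\lambda\|H-G\|_{\mathfrak X_T}$ in the contraction estimate. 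The natural bound is $\eps T(6C_0M_{\rm in}+\lambda)=8\eps TC_0M_{\rm in}\le1/2$, not $1/4$ as you state. This still suffices. The self-map bound is $(1+8\eps TC_0M_{\rm in})M_{\rm in}\le\frac32M_{\rm in}$.
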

	
	\begin{proof}
		Let $\mathbb B_{2M_{\rm in}}=\{H\in\mathfrak X_T:\,\|H\|_{\mathfrak X_T}\le2M_{\rm in}\}$,
		and $\mathfrak a=\eps TC_0M_{\rm in}$. Using $\Q_K(H)=\B_K(H,H)$ from \eqref{eq:polarization}, define
		\begin{equation}\label{eq:signed-fixed-point-map}
			\mathfrak T_\eps H=U^\eps+\eps\G_\eps\B_K(H,H).
		\end{equation}
		By the definition above, a function $H\in\mathfrak X_T$ is a mild solution of \eqref{eq:scaled-ibvp} if and only if it is a fixed point of $\mathfrak T_\eps$.
		
		If $H\in\mathbb B_{2M_{\rm in}}$, then Theorem~\ref{thm:collision-estimates},
		Lemma~\ref{lem:free-Duhamel-bounds}, and \eqref{eq:free-incoming-bound} give
		\begin{equation}\label{eq:signed-map-ball}
			\begin{aligned}
				\|\mathfrak T_\eps H\|_{\mathfrak X_T}\le \|U^\eps\|_{\mathfrak X_T}+\eps T\|\B_K(H,H)\|_{\mathfrak X_T}\le M_{\rm in}+\eps TC_0\|H\|_{\mathfrak X_T}^2\le M_{\rm in}+4\eps TC_0M_{\rm in}^2=(1+4\mathfrak a)M_{\rm in}.
			\end{aligned}
		\end{equation}
		Since $\mathfrak a\le1/16$, the right-hand side of \eqref{eq:signed-map-ball} is at most $5M_{\rm in}/4$, and hence, $\mathfrak T_\eps$ maps $\mathbb B_{2M_{\rm in}}$ into itself.
		
		For $H,G\in\mathbb B_{2M_{\rm in}}$, the bilinearity and symmetry of $\B_K$ give
		$\B_K(H,H)-\B_K(G,G)=\B_K(H-G,H)+\B_K(G,H-G)$. The bilinear estimate yields
		\begin{equation}\label{eq:signed-map-contraction}
			\begin{aligned}
				\|\mathfrak T_\eps H-\mathfrak T_\eps G\|_{\mathfrak X_T}\le\eps TC_0
				\big(\|H\|_{\mathfrak X_T}+\|G\|_{\mathfrak X_T}\big)\|H-G\|_{\mathfrak X_T}\le4\eps TC_0M_{\rm in}\|H-G\|_{\mathfrak X_T}=4\mathfrak a\|H-G\|_{\mathfrak X_T}.
			\end{aligned}
		\end{equation}
		The assumption $\mathfrak a\le1/16$ makes the contraction factor at most $1/4$. The Banach fixed-point theorem gives a unique $H^\eps\in\mathbb B_{2M_{\rm in}}$ satisfying
		\begin{equation}\label{eq:signed-fixed-point-identity}
			H^\eps=\mathfrak T_\eps H^\eps
			=U^\eps+\eps\G_\eps\B_K(H^\eps,H^\eps).
		\end{equation}
		By \eqref{eq:polarization}, $\B_K(H^\eps,H^\eps)=\Q_K(H^\eps)$, so \eqref{eq:signed-fixed-point-identity} is precisely \eqref{eq:mild-equation}. This makes $H^\eps$ a mild solution of \eqref{eq:scaled-ibvp}.
		
		We next prove uniqueness in $\mathfrak X_T$. Let $H,G\in\mathfrak X_T$ be two mild solutions with the same initial and incoming data, and set $D=H-G$. For $0\le s_1<s_2\le T$, let $\|\cdot\|_{\mathfrak X(s_1,s_2)}$ denote the norm in \eqref{eq:X-T-norm} with the essential supremum restricted to $(s_1,s_2)\times\Omega$.
		
		Suppose that $D=0$ almost everywhere on $(0,s_1)\times\Omega\times\R^3$. Subtracting the two mild identities and using the symmetry and bilinearity of $\B_K$ gives $D=\eps\G_\eps\big[\B_K(D,H)+\B_K(G,D)\big]$. For $0<h\le T-s_1$, the part of each backward characteristic lying in $0<s-r\le s_1$ gives no contribution. Hence,
		\begin{equation}\label{eq:local-mild-uniqueness}
			\|D\|_{\mathfrak X(s_1,s_1+h)}
			\le\eps hC_0\big(\|H\|_{\mathfrak X_T}+\|G\|_{\mathfrak X_T}\big)\|D\|_{\mathfrak X(s_1,s_1+h)}.
		\end{equation}
		Choose $h>0$ so that the coefficient on the right-hand side of \eqref{eq:local-mild-uniqueness} is strictly smaller than one. Starting with $s_1=0$, we obtain $D=0$ on $(0,h)$. Repeating the same argument on consecutive intervals of length at most $h$ proves that $D=0$ on $(0,T)$. Thus, the mild solution in $\mathfrak X_T$ is unique.
		
		Subtracting $U^\eps$ from \eqref{eq:signed-fixed-point-identity}, and then applying 	\eqref{eq:G-eps-bound} and \eqref{eq:bilinear-bound}, we obtain
		\begin{align*}
			\|H^\eps-U^\eps\|_{\mathfrak X_T}=\eps\big\|\G_\eps\B_K(H^\eps,H^\eps)\big\|_{\mathfrak X_T}\le\eps T\|\B_K(H^\eps,H^\eps)\|_{\mathfrak X_T}\le\eps TC_0\|H^\eps\|_{\mathfrak X_T}^2
			\le4\eps TC_0M_{\rm in}^2.
		\end{align*}
		This proves \eqref{eq:H-U}.
		
		It remains to prove that $H^\eps$ is nonnegative. Since $g^\eps\ge0$ and the initial data vanish, the free transport representation \eqref{eq:free-transport-representation} gives $U^\eps\ge0$. For a 	nonnegative $H\in\mathbb B_{2M_{\rm in}}$, define $r_b(s,x,w)=\min\{s,\tau_{-,\eps}(x,w)\}$, where
		$\tau_{-,\eps}(x,w)$ is given by \eqref{eq:def_tau_minus_eps}. For $0\le r\le r_b(s,x,w)$, set
		\begin{equation}\label{eq:attenuation}
			E_{i,H}(r;s,x,w)=\exp\bigg[-\eps\int_0^r \nu_{K,i}[H]\big(s-\ell,x-\ell a_\eps(w),w\big)\,d\ell\bigg].
		\end{equation}
		Because $\nu_{K,i}[H]\ge0$, one has $0<E_{i,H}\le1$.
		
		Define the nonnegative map $\mathfrak P_\eps$ componentwise by
		\begin{equation}\label{eq:positive-map}
			\begin{split}
				(\mathfrak P_\eps H)_i(s,x,w)&=E_{i,H}\big(r_b(s,x,w);s,x,w\big)U_i^\eps(s,x,w)\\
				&\quad\, 
				+\eps\int_0^{r_b(s,x,w)}
				E_{i,H}(r;s,x,w)
				\Q_{K,i}^+(H)\big(s-r,x-ra_\eps(w),w\big)\,dr.
			\end{split}
		\end{equation}
		The first term is the attenuated initial or incoming contribution, while the second term is the accumulated gain along the backward characteristic.
		
		By \eqref{eq:gain-bound}, for every nonnegative $H\in\mathbb B_{2M_{\rm in}}$,
		\begin{equation}\label{eq:positive-map-ball-detailed}
			\begin{split}
				\|\mathfrak P_\eps H\|_{\mathfrak X_T}\le\|U^\eps\|_{\mathfrak X_T}+\eps T\|\Q_K^+(H)\|_{\mathfrak X_T}\le M_{\rm in}+\eps TC_0\|H\|_{\mathfrak X_T}^2\le(1+4\mathfrak a)M_{\rm in}.
			\end{split}
		\end{equation}
		Estimate \eqref{eq:positive-map-ball-detailed} shows that $\mathfrak P_\eps$ maps the nonnegative part of $\mathbb B_{2M_{\rm in}}$ into itself whenever $\mathfrak a\le1/4$, and in particular under the present assumption $\mathfrak a\le1/16$.
		
		To prove that $\mathfrak P_\eps$ is a contraction, introduce the characteristic parameter set
		\[
		\mathscr C_\eps=\big\{(r,s,x,w):\,0<s<T,\ x\in\Omega,\ w\in\R^3,\ 0\le r\le r_b(s,x,w)\big\}.
		\]
		For nonnegative $H,G\in\mathbb B_{2M_{\rm in}}$, the frequency estimate \eqref{eq:frequency-Lipschitz} and the inequality $|e^{-u}-e^{-v}|\le|u-v|$ for $u,v\ge0$ give
		\begin{equation}\label{eq:attenuation-difference}
			\max_{1\le i\le4}\|E_{i,H}-E_{i,G}\|_{L^\infty(\mathscr C_\eps)}\le\eps TC_0\|H-G\|_{\mathfrak X_T}.
		\end{equation}
		Indeed, the length of every characteristic segment in $\mathscr C_\eps$ is at most $T$.
		
		Subtracting the two formulas in \eqref{eq:positive-map}, we obtain, for every $1\le i\le4$,
		\begin{equation*}
			\begin{split}
				&\quad\, (\mathfrak P_\eps H-\mathfrak P_\eps G)_i(s,x,w)\\
				&=\big[E_{i,H}\big(r_b(s,x,w);s,x,w\big)-E_{i,G}\big(r_b(s,x,w);s,x,w\big)\big]U_i^\eps(s,x,w)\\
				&\quad\, +\eps\int_0^{r_b(s,x,w)}E_{i,H}(r;s,x,w)\big[\Q_{K,i}^+(H)-\Q_{K,i}^+(G)\big]
				\big(s-r,x-ra_\eps(w),w\big)\,dr\\
				&\quad\, +\eps\int_0^{r_b(s,x,w)}\big[E_{i,H}(r;s,x,w)-E_{i,G}(r;s,x,w)\big]\Q_{K,i}^+(G)		\big(s-r,x-ra_\eps(w),w\big)\,dr.
			\end{split}
		\end{equation*}
		
		We estimate the three terms separately. For the first term, \eqref{eq:attenuation-difference} and 
		\eqref{eq:free-incoming-bound} give
		\begin{align*}
			&\sum_{i=1}^4\int_{\R^3}\esssup_{(s,x)\in(0,T)\times\Omega}
			\Big|E_{i,H}\big(r_b(s,x,w);s,x,w\big)-E_{i,G}\big(r_b(s,x,w);s,x,w\big)\Big|
			|U_i^\eps(s,x,w)|\,dw\\
			&\le \eps TC_0\|H-G\|_{\mathfrak X_T}\|U^\eps\|_{\mathfrak X_T}\le\mathfrak a\|H-G\|_{\mathfrak X_T}.
		\end{align*}
		
		Since $0<E_{i,H}\le1$, the second term is bounded, by \eqref{eq:gain-Lipschitz}, as follows:
		\begin{align*}
			&\quad \, \eps\sum_{i=1}^4\int_{\R^3}\esssup_{(s,x)\in(0,T)\times\Omega}
			\int_0^{r_b(s,x,w)}E_{i,H}(r;s,x,w)\\
			&\hspace{35mm}\times\big|\Q_{K,i}^+(H)-\Q_{K,i}^+(G)\big|
			\big(s-r,x-ra_\eps(w),w\big)\,dr\,dw\\
			&\le \eps T\|\Q_K^+(H)-\Q_K^+(G)\|_{\mathfrak X_T}\\
			&\le \eps TC_0\big(\|H\|_{\mathfrak X_T}+\|G\|_{\mathfrak X_T}\big)\|H-G\|_{\mathfrak X_T}\\
			&\le4\mathfrak a\|H-G\|_{\mathfrak X_T}.
		\end{align*}
		
		For the third term, using \eqref{eq:attenuation-difference} and \eqref{eq:gain-bound}, we obtain
		\begin{equation}\label{eq:equation_for_Z}
			\begin{split}
				&\quad \, \eps\sum_{i=1}^4\int_{\R^3}\esssup_{(s,x)\in(0,T)\times\Omega}\int_0^{r_b(s,x,w)}
				\big|E_{i,H}(r;s,x,w)-E_{i,G}(r;s,x,w)\big|\\
				&\hspace{35mm}\times \Q_{K,i}^+(G)\big(s-r,x-ra_\eps(w),w\big)\,dr\,dw\\
				&\le \eps T	\max_{1\le i\le4}\|E_{i,H}-E_{i,G}\|_{L^\infty(\mathscr C_\eps)}
				\|\Q_K^+(G)\|_{\mathfrak X_T}\\
				&\le \eps T \big(\eps TC_0\|H-G\|_{\mathfrak X_T}\big)C_0\|G\|_{\mathfrak X_T}^2\\
				&\le4\mathfrak a^2\|H-G\|_{\mathfrak X_T},
			\end{split}
		\end{equation}
		where we use $\|G\|_{\mathfrak X_T}\le2M_{\rm in}$ and $\mathfrak a=\eps TC_0M_{\rm in}$ in the last inequality. Combining the preceding three estimates gives
		\begin{equation}\label{eq:positive-contraction}
			\|\mathfrak P_\eps H-\mathfrak P_\eps G\|_{\mathfrak X_T}\le \big(5\mathfrak a+4\mathfrak a^2\big)
			\|H-G\|_{\mathfrak X_T}.
		\end{equation}
		
		Let $\mathbb B_{2M_{\rm in}}^+=\{H\in\mathbb B_{2M_{\rm in}}:\,H\ge0\}$; this is a closed subset
		of $\mathfrak X_T$. If $\mathfrak a\le1/16$, then $5\mathfrak a+4\mathfrak a^2
		\le\frac{5}{16}+\frac{1}{64}=\frac{21}{64}<1$. Therefore, $\mathfrak P_\eps$ is a contraction from $\mathbb B_{2M_{\rm in}}^+$ into itself. By the Banach fixed-point theorem, it has a unique fixed point in $\mathbb B_{2M_{\rm in}}^+$.
		
		For fixed nonnegative $H\in\mathbb B_{2M_{\rm in}}^+$, formula \eqref{eq:positive-map} solves the linear transport equation with loss coefficient $\eps\nu_K[H]$, source $\eps\Q_K^+(H)$, zero initial data, and incoming value $g^\eps$, by variation of constants along characteristics. At a fixed point, both the loss frequency and the gain are evaluated at the solution itself. The identity \eqref{eq:gain-loss} then gives $H=U^\eps+\eps\G_\eps\Q_K(H)$, so $H$ is a mild solution of \eqref{eq:scaled-ibvp}. Uniqueness in $\mathfrak X_T$ gives $H=H^\eps\ge0$.
	\end{proof}
	
	For $R=\eps^{-1}$, set $F_i^R(t,x,v)=H_i^\eps(Rt,x,v-Re)$. Reversing the change of variables shows that $F^R$ solves the physical initial-boundary value problem on $0<t<T/R$ in the mild sense, with the prescribed incoming data. This defines the boundary response $\A_{K,R}$.

	We next compute the leading term of the third component. Since $\G_\eps$ in \eqref{eq:G-eps} acts componentwise, we write $\G_{\eps,i}$ for the same operator acting in the $i$-th equation.
	
	\begin{theorem}[Expansion of the product population]\label{thm:one-reaction}
		Under the assumptions of Theorem~\ref{thm:forward-wp}, there exists a scalar remainder
		$\mathcal E_3^\eps\in L^1_w(L^\infty_{s,x})$ such that
		\begin{equation}\label{eq:one-reaction}
			H_3^\eps=\eps\G_{\eps,3}Q_{3,+}^K(U_1^\eps,U_2^\eps)+\mathcal E_3^\eps.
		\end{equation}
		Moreover,
		\begin{equation}\label{eq:one-reaction-error}
			\|\mathcal E_3^\eps\|_{\mathfrak X_T}\le12\eps^2T^2C_0^2M_{\rm in}^3.
		\end{equation}
		The numerical coefficient $12$ is universal. In particular, the estimate is uniform for $K$ in every admissible family for which $C_0$ is uniformly bounded.
	\end{theorem}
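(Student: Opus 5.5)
The plan is to take the third component of the mild identity \eqref{eq:mild-equation}. Since $U_3^\eps=0$ by Lemma~\ref{lem:free-Duhamel-bounds}, this gives
\[
H_3^\eps=\eps\G_{\eps,3}\Q_{K,3}(H^\eps).
\]
The idea is to compare $\Q_{K,3}(H^\eps)$ with $\Q_{K,3}(U^\eps)$ and show the difference is $O(\eps)$. Because $U^\eps$ has vanishing third and fourth components, the channel identity \eqref{eq:channel-identity} gives $\Q_{K,3}(U^\eps)=Q_{3,+}^K(U_1^\eps,U_2^\eps)$. We therefore define
\[
\mathcal E_3^\eps:=\eps\G_{\eps,3}\big[\Q_{K,3}(H^\eps)-\Q_{K,3}(U^\eps)\big],
\]
which yields \eqref{eq:one-reaction} as an identity almost everywhere. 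Membership of $\mathcal E_3^\eps$ in $L^1_w(L^\infty_{s,x})$ follows from the bounds below.

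To estimate the remainder, we apply \eqref{eq:G-eps-bound} to the scalar third component. The scalar $\mathfrak X_T$ norm of a component is bounded by the full vector norm, so
\[
\|\mathcal E_3^\eps\|_{\mathfrak X_T}\le\eps T\|\Q_K(H^\eps)-\Q_K(U^\eps)\|_{\mathfrak X_T}.
\]
The Lipschitz estimate \eqref{eq:Q-Lipschitz}, used with $Z=(0,T)\times\Omega$, bounds this by
\[
\eps TC_0\big(\|H^\eps\|_{\mathfrak X_T}+\|U^\eps\|_{\mathfrak X_T}\big)\|H^\eps-U^\eps\|_{\mathfrak X_T}.
\]
Theorem~\ref{thm:forward-wp} and \eqref{eq:free-incoming-bound} give $\|H^\eps\|_{\mathfrak X_T}\le2M_{\rm in}$ and $\|U^\eps\|_{\mathfrak X_T}\le M_{\rm in}$, and \eqref{eq:H-U} gives $\|H^\eps-U^\eps\|_{\mathfrak X_T}\le4\eps TC_0M_{\rm in}^2$. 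Multiplying these, we get
\[
\|\mathcal E_3^\eps\|_{\mathfrak X_T}\le\eps TC_0\cdot3M_{\rm in}\cdot4\eps TC_0M_{\rm in}^2=12\eps^2T^2C_0^2M_{\rm in}^3,
\]
which is exactly \eqref{eq:one-reaction-error}. The estimate depends on $K$ only through $C_0$, which gives the stated uniformity.

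The one point needing care is that $\G_{\eps,3}$ applied to the third component of a vector difference is controlled by the full-vector $\mathfrak X_T$ norm. This holds because $\G_\eps$ acts componentwise and the $\mathfrak X_T$ norm is a sum over the components. A second point is that \eqref{eq:Q-Lipschitz} is stated on $\mathfrak X(Z)$ for signed inputs, so the nonnegativity of $H^\eps$ is not needed in this step. With these two points checked, the rest is direct bookkeeping.
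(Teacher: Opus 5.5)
Your proposal is correct and follows the paper's proof essentially step for step. You use the third component of the mild identity with $U_3^\eps=0$, add and subtract $\Q_{K,3}(U^\eps)$, apply the channel identity \eqref{eq:channel-identity}, and then combine the Duhamel bound \eqref{eq:G-eps-bound} with the Lipschitz estimate \eqref{eq:Q-Lipschitz} and the bounds $2M_{\rm in}$, $M_{\rm in}$, $4\eps TC_0M_{\rm in}^2$ to obtain the constant $12$.
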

	
	\begin{proof}
		Taking the third component of the mild identity \eqref{eq:mild-equation} gives
		\begin{equation}\label{eq:third-component-mild}
			H_3^\eps=U_3^\eps+\eps\G_{\eps,3}\Q_{K,3}(H^\eps).
		\end{equation}
		The third incoming and initial data vanish. Therefore, Lemma~\ref{lem:free-Duhamel-bounds} gives $U_3^\eps=0$, and \eqref{eq:third-component-mild} reduces to $H_3^\eps=\eps\G_{\eps,3}\Q_{K,3}(H^\eps)$. We add and subtract the collision source evaluated at the free solution:
		\begin{align*}
			H_3^\eps=\eps\G_{\eps,3}\Q_{K,3}(U^\eps)+\eps\G_{\eps,3}\big[\Q_{K,3}(H^\eps)-\Q_{K,3}(U^\eps)\big].
		\end{align*}
		By Lemma~\ref{lem:free-Duhamel-bounds}, one also has $U_4^\eps=0$. Hence, $U_3^\eps=U_4^\eps=0$, and the channel identity \eqref{eq:channel-identity} yields $\Q_{K,3}(U^\eps)=Q_{3,+}^K(U_1^\eps,U_2^\eps)$. We define
		\begin{equation}\label{eq:one-reaction-remainder-definition}
			\mathcal E_3^\eps:=\eps\G_{\eps,3}\big[\Q_{K,3}(H^\eps)-\Q_{K,3}(U^\eps)\big].
		\end{equation}
		Substitution gives the decomposition \eqref{eq:one-reaction}.
		
		It remains to estimate the remainder \eqref{eq:one-reaction-remainder-definition}. The componentwise
		version of the Duhamel estimate \eqref{eq:G-eps-bound} gives
		\begin{equation}\label{eq:one-reaction-error-first-bound}
			\begin{split}
				\|\mathcal E_3^\eps\|_{\mathfrak X_T}\le \eps T\|\Q_{K,3}(H^\eps)-\Q_{K,3}(U^\eps)\|_{\mathfrak X_T}\le \eps T\|\Q_K(H^\eps)-\Q_K(U^\eps)\|_{\mathfrak X_T},
			\end{split}
		\end{equation}
		where the second inequality holds because the norm of one component is bounded by the norm of the full vector of four components.
		
		Using $\Q_K(F)=\B_K(F,F)$ and the symmetry and bilinearity of $\B_K$, identity \eqref{eq:quadratic-difference} gives $\Q_K(H^\eps)-\Q_K(U^\eps)=\B_K(H^\eps-U^\eps,H^\eps)+\B_K(U^\eps,H^\eps-U^\eps)$. Moreover, the bilinear estimate \eqref{eq:bilinear-bound} implies
		\begin{equation}\label{eq:one-reaction-source-difference}
			\|\Q_K(H^\eps)-\Q_K(U^\eps)\|_{\mathfrak X_T}\le C_0
			\big(\|H^\eps\|_{\mathfrak X_T}+\|U^\eps\|_{\mathfrak X_T}\big)
			\|H^\eps-U^\eps\|_{\mathfrak X_T}.
		\end{equation}
		Combining \eqref{eq:one-reaction-error-first-bound} and \eqref{eq:one-reaction-source-difference}, we obtain
		\begin{equation}\label{eq:one-reaction-error_final}
			\|\mathcal E_3^\eps\|_{\mathfrak X_T}\le \eps TC_0 \big(\|H^\eps\|_{\mathfrak X_T}
			+\|U^\eps\|_{\mathfrak X_T}\big)\|H^\eps-U^\eps\|_{\mathfrak X_T}.
		\end{equation}
		By Theorem~\ref{thm:forward-wp} and \eqref{eq:free-incoming-bound}, $\|H^\eps\|_{\mathfrak X_T}\le2M_{\rm in}$, $\|U^\eps\|_{\mathfrak X_T}\le M_{\rm in}$, while \eqref{eq:H-U} gives $\|H^\eps-U^\eps\|_{\mathfrak X_T} \le4\eps TC_0M_{\rm in}^2$. Inserting these inequalities into
		\eqref{eq:one-reaction-error_final} gives \eqref{eq:one-reaction-error}, which proves the assertion.
	\end{proof}
	
	The first term in \eqref{eq:one-reaction} corresponds to free transport of the two incoming populations, one forward reaction, and free transport of the resulting particle of species $3$. Each remainder term contains $H^\eps-U^\eps$ and comes from at least one additional collision. The estimate $\|H^\eps-U^\eps\|_{\mathfrak X_T}=O(\eps)$ and the factor $\eps$ in the Duhamel integral give an $O(\eps^2)$ remainder.
	
	\subsection{Boundary coordinates and outgoing flux}\label{subsec:boundary-flux}
	
	Membership in $\mathfrak X_T$ does not by itself give a pointwise boundary trace. The mild equation defines an outgoing representative along almost every characteristic. We use coordinates along $a_\eps(w)$ to construct this representative and estimate its integrals against detector profiles.
	
	\para{Oblique characteristic coordinates} Fix $e\in\Sph$, $U\Subset\Omega_e$, and a bounded velocity set $W\Subset\R^3$. Recall that $a_\eps(w)=e+\eps w$. After decreasing $\eps_0>0$, one has $a_\eps(w)\cdot e\ge1/2$ for $0\le\eps\le\eps_0$ and $w\in W$. For such $(\eps,w)$, define the projection along $a_\eps(w)$ onto $e^\perp$ by
	\begin{equation}\label{eq:oblique-projection}
		\pi_{\eps,w}^e(x)=x-\frac{x\cdot e}{a_\eps(w)\cdot e}a_\eps(w).
	\end{equation}
	Then $\pi_{\eps,w}^e(x)\in e^\perp$, and the projection is constant on every affine line parallel to $a_\eps(w)$.
	
	\begin{lemma}[Oblique boundary coordinates]\label{lem:characteristic-maps}
		There is $\eps_0>0$ such that, for $0\le\eps\le\eps_0$, $y\in U$, and $w\in W$, there exist unique numbers $\sigma_-^\eps(y,w)<\sigma_+^\eps(y,w)$ satisfying
		\begin{equation}\label{eq:oblique-chord}
			\Omega\cap\big(y+\R a_\eps(w)\big)=\left\{y+\sigma a_\eps(w):\, \sigma_-^\eps(y,w)<\sigma<\sigma_+^\eps(y,w)\right\}.
		\end{equation}
		The line $y+\R a_\eps(w)$ meets $\p\Omega$ at exactly the two points
		\begin{equation}\label{eq:oblique-endpoints}
			x_\pm^\eps(y,w)=y+\sigma_\pm^\eps(y,w)a_\eps(w).
		\end{equation} The functions $\sigma_\pm^\eps$ and $x_\pm^\eps$ are $C^1$ in $(\eps,y,w)$, and
		\begin{equation}\label{eq:oblique-Oeps}
			|\sigma_\pm^\eps(y,w)-\sigma_\pm(y,e)|+|x_\pm^\eps(y,w)-x_\pm(y,e)|\le C\eps.
		\end{equation}
		Moreover,
		\begin{equation}\label{eq:oblique-signs}
			n(x_-^\eps(y,w))\cdot a_\eps(w)<0<n(x_+^\eps(y,w))\cdot a_\eps(w),
		\end{equation}
		and the boundary area formulas are
		\begin{align}
			-n(x_-^\eps(y,w))\cdot a_\eps(w)\,dS_{x_-^\eps}
			&=(a_\eps(w)\cdot e)\,dy,\label{eq:incoming-flux-jacobian}\\
			n(x_+^\eps(y,w))\cdot a_\eps(w)\,dS_{x_+^\eps}
			&=(a_\eps(w)\cdot e)\,dy.\label{eq:outgoing-flux-jacobian}
		\end{align}
		All estimates are uniform on $U\times W$.
	\end{lemma}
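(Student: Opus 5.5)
The plan is to treat the statement as a perturbation of Lemma~\ref{lem:chord-geometry}, with $e$ replaced by the tilted direction $a_\eps(w)=e+\eps w$. Choose a $C^3$ defining function $\rho$ of $\Omega$ on a neighbourhood of $\p\Omega$, as in the proof of that lemma. Set
\[
\mathcal R(\eps,y,w,\sigma)=\rho\big(y+\sigma a_\eps(w)\big).
\]
At $\eps=0$ the roots in $\sigma$ are $\sigma_\pm(y,e)$. There $\p_\sigma\mathcal R=\nabla\rho(x_\pm)\cdot e$, which is nonzero and has the correct sign. By \eqref{eq:local-nongrazing} it is bounded below by $\kappa_U|\nabla\rho|$ uniformly for $y\in\ol U$.

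Apply the implicit function theorem in $(\eps,y,w,\sigma)$ near each point of the compact set $\{0\}\times\ol U\times\ol W$. This gives $C^1$ (indeed $C^2$) roots $\sigma_\pm^\eps(y,w)$ on a uniform neighbourhood $0\le\eps\le\eps_0$. Compactness makes the neighbourhood uniform, and the local solutions patch together because the roots are unique near each endpoint. The bound \eqref{eq:oblique-Oeps} follows from the mean value theorem in $\eps$, with a uniform bound on $\p_\eps\sigma_\pm^\eps$. The signs \eqref{eq:oblique-signs} follow by continuity from $\eps=0$ after shrinking $\eps_0$.

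Next I identify these roots with the full intersection \eqref{eq:oblique-chord}. For $\eps\le\eps_0$ and $w\in W$ the vector $a_\eps(w)\neq0$. By convexity, $\Omega\cap(y+\R a_\eps(w))$ is an open interval, bounded because $\Omega$ is bounded. It is nonempty because $y+\sigma_\pm^\eps a_\eps$ lie on $\p\Omega$ with transversal crossings of opposite orientation: the derivative signs put points just inside $\Omega$ between them. A line meets the boundary of a bounded open convex set in at most two points unless it contains a boundary segment, and strict convexity excludes segments. So the two roots are exactly the endpoints of the interval, which gives \eqref{eq:oblique-chord} and \eqref{eq:oblique-endpoints}.

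For the area formulas I repeat the cross-product computation from Lemma~\ref{lem:chord-geometry}. Take the oriented orthonormal basis $b_1,b_2$ of $e^\perp$ with $b_1\times b_2=e$, and parametrize $X_\pm(y)=y+\sigma_\pm^\eps(y,w)a$ with $a=a_\eps(w)$. Then $\p_{y_j}X_\pm=b_j+(\p_{y_j}\sigma_\pm^\eps)a$, so
\[
N_\pm=b_1\times b_2+\p_{y_2}\sigma_\pm^\eps\,(b_1\times a)+\p_{y_1}\sigma_\pm^\eps\,(a\times b_2).
\]
Dotting with $a$ kills the last two terms, so $N_\pm\cdot a=e\cdot a$. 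Since $N_\pm=\pm|N_\pm|n(X_\pm)$, with the sign fixed by \eqref{eq:oblique-signs} and $a\cdot e\ge1/2>0$, and $dS=|N_\pm|\,dy$, this yields \eqref{eq:incoming-flux-jacobian} and \eqref{eq:outgoing-flux-jacobian}.

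The only delicate step is the uniformity and global uniqueness of the roots in the second paragraph: the implicit function theorem is local, so it must be combined with compactness of $\ol U\times\ol W$ and with the convexity argument. All other steps are direct computations.
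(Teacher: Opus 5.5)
Your proposal is correct and follows essentially the same route as the paper: the implicit function theorem applied to $\varrho(y+\sigma a_\eps(w))$ at the transversal roots $\sigma_\pm(y,e)$, compactness of $\ol U\times\ol W$ for uniformity, convexity to identify the two roots as the only boundary intersections, continuity for the signs, and the cross-product identity $N_\pm\cdot a_\eps(w)=a_\eps(w)\cdot e$ for the area formulas. Your identification step, which shows nonemptiness from the opposite crossing orientations before invoking convexity, is spelled out somewhat more carefully than the paper's brief appeal to convexity.
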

	
	\begin{proof}
		Choose an open neighborhood $\mathcal N$ of $\p\Omega$ and a $C^3$ function $\varrho:\mathcal N\to\R$ such that $\Omega\cap\mathcal N=\{\varrho<0\}$, $\p\Omega=\{\varrho=0\}$, and $\nabla\varrho\ne0$ on $\p\Omega$. Thus, $\varrho$ is a boundary defining function for $\Omega$. With this sign convention, $n(x)=\frac{\nabla\varrho(x)}{|\nabla\varrho(x)|}$, $x\in\p\Omega$.
		
		Near the two boundary intersections, set
		\begin{equation}\label{eq:boundary-root-equation}
			\mathcal F(\eps,y,w,\sigma):=\varrho\big(y+\sigma a_\eps(w)\big).
		\end{equation}
		A value of $\sigma$ determines a boundary intersection precisely when $\mathcal F(\eps,y,w,\sigma)=0$. At $\eps=0$, one has $a_\eps(w)|_{\eps=0}=a_0(w)=e$, independently of $w$, and hence, $\mathcal F\big(0,y,w,\sigma_\pm(y,e)\big)=0$. Moreover,
		\[
		\p_\sigma\mathcal F\big(0,y,w,\sigma_\pm(y,e)\big)=\nabla\varrho\big(x_\pm(y,e)\big)\cdot e
		=|\nabla\varrho\big(x_\pm(y,e)\big)|n\big(x_\pm(y,e)\big)\cdot e.
		\]
		By Lemma~\ref{lem:chord-geometry}, the factors $|n(x_\pm(y,e))\cdot e|$ are bounded from below on $\ol U$. The function $|\nabla\varrho|$ is also bounded from below on the compact boundary sheets $\{x_\pm(y,e):y\in\ol U\}$. Consequently, there exists $c_U>0$ such that
		\[
		\big|\p_\sigma\mathcal F\big(0,y,w,\sigma_\pm(y,e)\big)\big|\ge c_U
		\]
		for every $y\in\ol U$ and $w\in\ol W$.
		
		The implicit function theorem, followed by compactness of $\ol U\times\ol W$, gives $\eps_0>0$ and two uniquely determined roots $\sigma=\sigma_\pm^\eps(y,w)$ for $0\le\eps\le\eps_0$, $y\in U$, and $w\in W$. These roots are $C^1$ functions of $(\eps,y,w)$ and satisfy $\sigma_\pm^0(y,w)=\sigma_\pm(y,e)$. Since $\Omega$ is convex, its intersection with an affine line is an interval. Thus, the two roots are the only boundary intersections of the line $y+\R a_\eps(w)$.
		
		The $C^1$ dependence on $\eps$, together with compactness of $\ol U\times\ol W$, gives $|\sigma_\pm^\eps(y,w)-\sigma_\pm(y,e)|\le C\eps$. Furthermore,
		\[
		x_\pm^\eps(y,w)-x_\pm(y,e)=\big(\sigma_\pm^\eps(y,w)-\sigma_\pm(y,e)\big)a_\eps(w)
		+\sigma_\pm(y,e)\big(a_\eps(w)-e\big).
		\]
		Since $a_\eps(w)-e=\eps w$ and both $W$ and the functions $\sigma_\pm(\cdot,e)$ are bounded on the relevant compact sets, this proves \eqref{eq:oblique-Oeps}. Finally, the continuity of $n$, $a_\eps$, and $x_\pm^\eps$, together with $n(x_-(y,e))\cdot e<0<n(x_+(y,e))\cdot e$ gives \eqref{eq:oblique-signs}, after decreasing $\eps_0$ if necessary.
		
		We prove \eqref{eq:outgoing-flux-jacobian}; the incoming identity is the same with the opposite orientation. Choose an oriented orthonormal basis $b_1,b_2$ of $e^\perp$ such that $b_1\times b_2=e$, and write $y=y_1b_1+y_2b_2$. For fixed $(\eps,w)$, the outgoing sheet is parametrized by $X(y)=y+\sigma_+^\eps(y,w)a_\eps(w)$. Hence,
		\begin{equation*}
			\p_{y_j}X=b_j+(\p_{y_j}\sigma_+^\eps)a_\eps(w), \quad j=1,2.
		\end{equation*}
		Taking the scalar product of $\p_{y_1}X\times\p_{y_2}X$ with $a_\eps(w)$ removes the two terms containing a repeated factor $a_\eps(w)$ and gives
		\begin{equation*}
			a_\eps(w)\cdot(\p_{y_1}X\times\p_{y_2}X)=a_\eps(w)\cdot e.
		\end{equation*}
		The orientation is positive at the outgoing sheet by \eqref{eq:oblique-signs}. Since $n\,dS$ is the oriented surface element, this proves \eqref{eq:outgoing-flux-jacobian}. The same calculation at the incoming sheet gives \eqref{eq:incoming-flux-jacobian}.
	\end{proof}
	
	We write 
	\begin{equation}\label{eq:def_L_eps}
		L_\eps(y,w):=\sigma_+^\eps(y,w)-\sigma_-^\eps(y,w).
	\end{equation}
	By Lemma~\ref{lem:characteristic-maps}, one has $L_\eps(y,w)=L(y,e)+O(\eps)$ uniformly for $(y,w)\in U\times W$. For every open set $V\Subset\Omega_e$, define
	\begin{equation}\label{eq:characteristic-tube}
		\mathscr T_V:=\left\{y+\sigma e:\, y\in\ol V,\  \sigma_-(y,e)\le \sigma\le \sigma_+(y,e)\right\}.
	\end{equation}
	The parameter set $\{(y,\sigma):y\in\ol V,\ \sigma_-(y,e)\le\sigma\le\sigma_+(y,e)\}$ is compact because $\ol V$ is compact and $\sigma_\pm(\cdot,e)$ are continuous. Its image $\mathscr T_V$ is the union of the corresponding closed chords and is a compact subset of $\ol\Omega$. By \eqref{eq:local-nongrazing}, their endpoints meet $\p\Omega$ uniformly transversally.
	
	\para{Exact incoming pullback} Let $g_i\in C_c^\infty((0,T)\times U\times W)$ and extend it by zero to $\R\times e^\perp\times\R^3$. Define the scaled incoming data by
	\begin{equation}\label{eq:incoming-pullback}
		g_i^\eps(s,x_-^\eps(y,w),w)=g_i(s,y,w),
	\end{equation}
	and set them equal to zero on the rest of $\Sigma_{-,\eps}$. The map $(y,w)\mapsto(x_-^\eps(y,w),w)$ is injective. The flux Jacobian in Lemma~\ref{lem:characteristic-maps} is nonzero, so
	\begin{equation}\label{eq:incoming-uniform}
		\|g^\eps\|_{\mathfrak X_{-,\eps}}\le\sum_{i=1}^4\int_W\esssup_{(s,y)\in(0,T)\times U}|g_i(s,y,w)|\,dw.
	\end{equation}
	In physical variables, \eqref{eq:physical-scaled-inflow} gives
	\begin{equation}\label{eq:physical-incoming-pullback}
		G_{i,R}(\eps s,x_-^\eps(y,w),Re+w)=g_i(s,y,w),\quad R=\eps^{-1},\quad i=1,2.
	\end{equation}
	These points belong to $\Gamma_-$ because $n(x_-^\eps)\cdot(Re+w)=R n(x_-^\eps)\cdot a_\eps(w)<0$.
	
	For $x\in\Omega$ and $w\in W$, let $b_{-,\eps}(x,w)=x-\tau_{-,\eps}(x,w)a_\eps(w)$. Its incoming line coordinate is
	\begin{equation}\label{eq:Y-minus-eps}
		Y_{-,\eps}(x,w)=\pi_{\eps,w}^e(b_{-,\eps}(x,w))=\pi_{\eps,w}^e(x).
	\end{equation}
	The second equality follows because $\pi_{\eps,w}^e$ is constant along $a_\eps(w)$-lines. If $Y=Y_{-,\eps}(x,w)\in U$, then
	\[
	\tau_{-,\eps}(x,w)=\frac{x\cdot e}{a_\eps(w)\cdot e}-\sigma_-^\eps(Y,w),
	\quad b_{-,\eps}(x,w)=x_-^\eps(Y,w).
	\]
	Using the zero extension in time and in $y$, \eqref{eq:free-transport-representation} becomes
	\begin{equation}\label{eq:free-solution-formula}
		U_i^\eps(s,x,w)=g_i(s-\tau_{-,\eps}(x,w),Y_{-,\eps}(x,w),w).
	\end{equation}
	For $Y_{-,\eps}(x,w)\notin U$, both the prescribed incoming value and the right-hand side vanish. If $s\le\tau_{-,\eps}(x,w)$, they vanish by the initial condition and the zero extension in time.
	
	For each component, set 
	\begin{equation}\label{eq:incoming-C1-component-norm}
		\|g_i\|_{\mathfrak X_-^1}:=\int_{\R^3}\esssup_{(s,y)\in\R\times e^\perp}\big(|g_i|+|\p_s g_i|+|\nabla_y g_i|\big)(s,y,w)\,dw,
	\end{equation}
	and write $\|g\|_{\mathfrak X_-^1}=\sum_{i=1}^4\|g_i\|_{\mathfrak X_-^1}$.
	
	\begin{lemma}[Stability of the free characteristics]\label{lem:free-convergence}
		Assume $\|g\|_{\mathfrak X_-^1}<\infty$. For $0\le\eps\le\eps_0$ and $i=1,2$, one has
		\begin{equation}\label{eq:free-convergence}
			\|U_i^\eps-U_i^0\|_{L^1_w(L^\infty_{s,x}((0,T)\times\Omega))}
			\le C\eps\|g_i\|_{\mathfrak X_-^1}.
		\end{equation}
		The constant $C$ depends only on $e$, $W$, the geometry of $\Omega$, and a compact set $U_g\Subset U$ containing all $y\in U$ for which $g_1(s,y,w)$ or $g_2(s,y,w)$ is nonzero for some $(s,w)\in\R\times W$. In particular, $C$ is independent of the width of an $L^1$-normalized velocity packet whose $(s,y)$ profile is fixed.
	\end{lemma}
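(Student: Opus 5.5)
The plan is to compare the explicit representation \eqref{eq:free-solution-formula} at $\eps$ and at $\eps=0$ pointwise and then take the velocity envelope. For $\eps=0$ one has $a_0=e$, $\pi^e_{0,w}=P_{e^\perp}$, and $\tau_{-,0}(x,w)=x\cdot e-\sigma_-(P_{e^\perp}x,e)$ whenever $P_{e^\perp}x\in U$. Thus $U_i^0(s,x,w)=g_i(s-\tau_{-,0}(x),P_{e^\perp}x,w)$ with the zero extension. We fix $w\in W$ and write
\[
U_i^\eps(s,x,w)-U_i^0(s,x,w)=g_i(s-\tau_{-,\eps},Y_{-,\eps},w)-g_i(s-\tau_{-,0},Y_{-,0},w).
\]
The mean value theorem along the segment joining the two arguments, which lies in $\R\times e^\perp$ and where the zero extension of $g_i$ is $C^1$, bounds the right-hand side by
\[
\sup_{(s',y')}\big(|\p_sg_i|+|\nabla_yg_i|\big)(s',y',w)\,\big(|\tau_{-,\eps}-\tau_{-,0}|+|Y_{-,\eps}-Y_{-,0}|\big).
\]
Integrating the envelope in $w$ then yields \eqref{eq:free-convergence}, provided we show that the geometric factor is $\le C\eps$ uniformly wherever at least one of the two terms is nonzero.

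For the projection, \eqref{eq:oblique-projection} gives
\[
\pi^e_{\eps,w}(x)-P_{e^\perp}x=(x\cdot e)\Big(e-\frac{a_\eps(w)}{a_\eps(w)\cdot e}\Big),
\]
which is $O(\eps)$ uniformly for $x\in\Omega$ bounded and $w\in W$, since $a_\eps\cdot e\ge1/2$. For the exit times we may restrict to $x$ with $Y_{-,\eps}(x,w)\in U_g$ or $Y_{-,0}(x)\in U_g$. Then, for small $\eps$, both projections lie in a fixed compact $U'$ with $U_g\Subset U'\Subset U$, since they differ by $C\eps<\dist(U_g,\p U')$. On $U'$ we have the formula $\tau_{-,\eps}=\frac{x\cdot e}{a_\eps\cdot e}-\sigma_-^\eps(Y_{-,\eps},w)$. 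Hence
\[
|\tau_{-,\eps}-\tau_{-,0}|\le C\eps+|\sigma_-^\eps(Y_{-,\eps},w)-\sigma_-(Y_{-,\eps},e)|+|\sigma_-(Y_{-,\eps},e)-\sigma_-(Y_{-,0},e)|.
\]
The middle term is $O(\eps)$ by \eqref{eq:oblique-Oeps}. The last term is $O(\eps)$ because $\sigma_-(\cdot,e)$ is Lipschitz on $\ol{U'}$ by Lemma~\ref{lem:chord-geometry}.

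The main obstacle is the case where only one of the two points projects into $U_g$, or where one of the arguments is cut off by the time-zero condition $s\le\tau$. The zero extension in $y$ handles the first situation: if, say, $Y_{-,0}\in U'\setminus U_g$, then the $g_i$-term there vanishes, but the mean value estimate is still valid because $g_i(\cdot,\cdot,w)$ extended by zero is globally $C^1$ on $\R\times e^\perp$. The only requirement is that both $\tau$-formulas make sense, which is why we enlarge to $U'$. Similarly, $g_i$ is compactly supported in $(0,T)$ in time, so the extension by zero for $s-\tau\le0$ is smooth and the cutoff creates no jump. If both projections lie outside $U_g$, both terms vanish. The constants depend only on $e$, $W$, $\Omega$, and $U_g$ through $\operatorname{Lip}\sigma_-$ and the constant in \eqref{eq:oblique-Oeps} on $U'$. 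For $\eps$ near $\eps_0$ the bound is trivial after enlarging $C$, since both terms are bounded by $\|g_i\|_{\mathfrak X^1_-}$.

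Finally, independence from the packet width holds because the estimate only uses the $(s,y)$-derivatives inside the $w$-integral. For a packet $g_i(s,y,w)=\phi(s,y)\delta^{-3}\psi((w-w_i)/\delta)$, the norm $\|g_i\|_{\mathfrak X^1_-}$ equals $\|\phi\|_{C^1}\|\psi\|_{L^1}$, which does not depend on $\delta$.
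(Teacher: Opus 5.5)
Your proposal is correct and follows essentially the same route as the paper. Both arguments compare the explicit characteristic formula at $\eps$ and at $0$ pointwise, use the $O(\eps)$ projection estimate, combine the root formula for $\tau_{-,\eps}$ with \eqref{eq:oblique-Oeps} and the bounded derivatives of $\sigma_-(\cdot,e)$ on an intermediate compact set (your $U'$, the paper's $V_0\Subset V_1$), and apply the mean value theorem to the smooth zero extension of $g_i$ in $(s,y)$. Your handling of the range of $\eps$ that is not small, via the crude bound $\|U_i^\eps\|+\|U_i^0\|\le 2\|g_i\|_{\mathfrak X_-^1}$, is a small step the paper leaves implicit.
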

	
	\begin{proof}
		Choose open sets $V_0,V_1$ such that $\ol U_g\subset V_0\Subset V_1\Subset U$. For bounded $x\in\Omega$ and $w\in W$, formula \eqref{eq:oblique-projection} gives $|Y_{-,\eps}(x,w)-P_{e^\perp}x|\le C\eps$. If either free field is nonzero, one of these two line coordinates belongs to $\ol U_g$; both belong to $V_0$ for sufficiently small $\eps$.
		
		On this set the incoming root formula following \eqref{eq:Y-minus-eps} gives
		\[
		\tau_{-,\eps}(x,w)=\frac{x\cdot e}{a_\eps(w)\cdot e}-\sigma_-^\eps(Y_{-,\eps}(x,w),w),
		\quad \tau_{-,0}(x,w)=x\cdot e-\sigma_-(P_{e^\perp}x,e).
		\]
		The uniform root estimate \eqref{eq:oblique-Oeps}, the bounded derivatives of $\sigma_-(\cdot,e)$ on $V_1$, and the projection estimate imply
		\begin{equation}\label{eq:footpoint-convergence}
			|\tau_{-,\eps}(x,w)-\tau_{-,0}(x,w)|+|Y_{-,\eps}(x,w)-Y_{-,0}(x,w)|\le C\eps.
		\end{equation}
		Apply the mean value theorem to the smooth zero extension of $g_i$ in its $(s,y)$ variables. If at least one free field is nonzero, \eqref{eq:free-solution-formula} and \eqref{eq:footpoint-convergence} give
		\[
		|U_i^\eps-U_i^0|(s,x,w)\le C\eps\sup_{(t,y)\in\R\times e^\perp}(|\p_tg_i|+|\nabla_yg_i|)(t,y,w).
		\]
		The same inequality is trivial where both fields vanish. Taking the essential supremum in $(s,x)$ and integrating in $w$ proves \eqref{eq:free-convergence}. Only derivatives of the fixed temporal and spatial profiles occur for the packet family in Definition~\ref{def:beam-data}; the velocity packets have unit $L^1$ mass.
	\end{proof}
	
	\para{Outgoing flux as a weak boundary measurement} 
	Let $W_3\Subset\R^3$ be a bounded open set. Applying Lemma~\ref{lem:characteristic-maps} with $W_3$ in place of $W$, and decreasing $\eps_0$ if necessary, the quantities $x_\pm^\eps(y,w)$ and $L_\eps(y,w)$ are defined uniformly for $(y,w)\in U\times W_3$.
	
	For a scalar source $f$ with $\|f\|_{\mathfrak X_T}<\infty$, define its outgoing Duhamel representative by
	\begin{align}
		(\mathcal T_{+,\eps}f)(s,y,w)
		&=\int_0^{\min\{s,L_\eps(y,w)\}}
		f\big(s-r,x_+^\eps(y,w)-ra_\eps(w),w\big)\,dr.
		\label{eq:outgoing-characteristic-trace}
	\end{align}
	This quantity is defined for almost every $(s,y,w)$. It is the outgoing value of the solution with zero initial and incoming data and source $f$.
	
	\begin{lemma}[Outgoing Duhamel estimate]\label{lem:trace-bound}
		Let $f$ be a scalar source with $\|f\|_{\mathfrak X_T}<\infty$. Then
		\begin{equation}\label{eq:trace-bound}
			\int_{W_3}\esssup_{(s,y)\in(0,T)\times U}|(\mathcal T_{+,\eps}f)(s,y,w)|\,dw\le T\|f\|_{\mathfrak X_T}.
		\end{equation}
		The estimate is uniform for $0\le\eps\le\eps_0$.
	\end{lemma}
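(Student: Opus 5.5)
The estimate is pointwise along characteristics, and the supremum is then absorbed into the $\mathfrak X_T$ envelope. Set
\[
f^\sharp(w)=\esssup_{(s',x')\in(0,T)\times\Omega}|f(s',x',w)|,
\qquad\text{so that}\qquad \|f\|_{\mathfrak X_T}=\int_{\R^3}f^\sharp(w)\,dw .
\]
For fixed $(y,w)\in U\times W_3$ and $0<s<T$, the integration variable in \eqref{eq:outgoing-characteristic-trace} satisfies $0\le r\le\min\{s,L_\eps(y,w)\}$. Along this range the evaluation point $(s-r,\,x_+^\eps(y,w)-ra_\eps(w))$ lies in $(0,T)\times\Omega$, apart from the endpoints. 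This uses Lemma~\ref{lem:characteristic-maps}: the segment from $x_+^\eps$ backwards along $a_\eps(w)$ of parameter length $L_\eps$ is exactly the open chord \eqref{eq:oblique-chord}.

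The integrand is therefore bounded by $f^\sharp(w)$, and the interval has length at most $s<T$. This gives $|(\mathcal T_{+,\eps}f)(s,y,w)|\le T f^\sharp(w)$. Taking the essential supremum over $(s,y)$ and integrating over $W_3\subset\R^3$ yields \eqref{eq:trace-bound}. The constant $T$ does not depend on $\eps$, so the bound is uniform for $0\le\eps\le\eps_0$.

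The only delicate point is the measure-theoretic one. The bound $|f(s',x',w)|\le f^\sharp(w)$ holds only for almost every $(s',x')$, and for fixed $w$ the characteristic curve $r\mapsto(s-r,x_+^\eps(y,w)-ra_\eps(w))$ is a null set in $(0,T)\times\Omega$. So it is not immediate that the bound holds along almost every such curve.

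To handle this, I would use the map $(s,y,r)\mapsto(s-r,\,x_+^\eps(y,w)-ra_\eps(w))$ for fixed $w$. Writing $x=y+\sigma a_\eps(w)$ with $\sigma=\sigma_+^\eps(y,w)-r$, this map is a composition of the linear change $(y,\sigma)\mapsto y+\sigma a_\eps(w)$, whose Jacobian is $a_\eps(w)\cdot e\ge1/2$, with the shifts $\sigma\mapsto\sigma_+^\eps(y,w)-r$ and $s'=s-r$, which preserve measure. Hence it sends null sets to null sets locally. This is the same argument used with Fubini in Lemma~\ref{lem:free-Duhamel-bounds}.

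Consequently, for a.e. $w$, the exceptional null set $N_w$ where $|f|>f^\sharp(w)$ pulls back to a null set of $(s,y,r)$. By Fubini, for a.e. $(s,y)$ the $r$-section is null, so the pointwise bound holds for a.e. $r$. This simultaneously shows that $\mathcal T_{+,\eps}f$ is well defined almost everywhere and does not depend on the representative of $f$. The essential supremum over $(s,y)$ then only ignores null sets, and the estimate follows.
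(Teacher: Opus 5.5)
Your proof is correct and follows the paper's argument. In both, the change of variables $(s,y,r)\mapsto(s-r,x_+^\eps(y,w)-ra_\eps(w))$ has absolute Jacobian $a_\eps(w)\cdot e\ge1/2$, and Fubini then gives independence of the representative of $f$ together with the a.e.\ bound $|\mathcal T_{+,\eps}f|\le T\esssup_{(t,x)}|f(t,x,w)|$, after which one takes the essential supremum in $(s,y)$ and integrates in $w$; the only cosmetic difference is that you compute the Jacobian by factoring through $(y,\sigma)\mapsto y+\sigma a_\eps(w)$ and a unimodular shear, whereas the paper cites the outgoing flux Jacobian \eqref{eq:outgoing-flux-jacobian}.
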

	
	\begin{proof}
		Fix $w\in W_3$. On $0<r<\min\{s,L_\eps(y,w)\}$, consider
		\[
		(s,y,r)\mapsto(t,x)=(s-r,x_+^\eps(y,w)-ra_\eps(w)).
		\]
		Its absolute Jacobian is $a_\eps(w)\cdot e\ge1/2$. Indeed, the time coordinate has derivative one in $s$, and the spatial Jacobian is the outgoing flux Jacobian \eqref{eq:outgoing-flux-jacobian}. Fubini's theorem now shows that changing $f$ on a null set changes \eqref{eq:outgoing-characteristic-trace} only on a null set in $(s,y,w)$. It also gives, for almost every $(s,y,w)$,
		\[
		|(\mathcal T_{+,\eps}f)(s,y,w)|\le T\esssup_{(t,x)\in(0,T)\times\Omega}|f(t,x,w)|.
		\]
		Taking the essential supremum in $(s,y)$ and integrating in $w$ proves \eqref{eq:trace-bound}.
	\end{proof}
	
	Since the third component has zero initial and incoming data, define its outgoing representative on the coordinate patch by
	\begin{equation}\label{eq:third-outgoing-trace-definition}
		(\gamma_{+,\eps}H_3^\eps)(s,y,w)
		:=\eps\big(\mathcal T_{+,\eps}\Q_{K,3}(H^\eps)\big)(s,y,w).
	\end{equation}
	By Lemma~\ref{lem:trace-bound}, the definition does not depend on the representative of $\Q_{K,3}(H^\eps)$. Along almost every characteristic, the mild equation gives an absolutely continuous solution whose outgoing endpoint value is \eqref{eq:third-outgoing-trace-definition}. Applying the variation of constants formula from the proof of Theorem~\ref{thm:forward-wp} up to this endpoint shows that the trace is nonnegative. For a classical solution, it agrees with the usual outgoing trace.
	
	For $(x,v)\in\Gamma_+$, let $\tau_{\Omega,-}(x,v)$ be the largest number $\tau>0$ such that $x-\theta v\in\Omega$ for every $0<\theta<\tau$. Boundedness and strict convexity give $0<\tau_{\Omega,-}(x,v)<\infty$. For fixed $v$, the map from an outgoing boundary point and a flight time to an interior point has Jacobian $|n(x)\cdot v|>0$. Applying the Fubini argument of Lemma~\ref{lem:trace-bound} in local boundary charts gives the following definition for almost every $(t,x,v)\in(0,T/R)\times\Gamma_+$, independently of the representative of the collision source:
	\begin{equation}\label{eq:global-physical-outgoing-trace}
		\gamma_+F_3^R(t,x,v)
		:=\int_0^{\min\{t,\tau_{\Omega,-}(x,v)\}}\Q_{K,3}(F^R)(t-\theta,x-\theta v,v)\,d\theta.
	\end{equation}
	This is the outgoing characteristic representative used in the physical flux $J_{3,R}$ from \eqref{eq:intro-product-flux}. On the boundary patch parametrized by $y\in U$ and $w\in W_3$, one has $\tau_{\Omega,-}(x_+^\eps(y,w),Re+w)=L_\eps(y,w)/R$, and therefore,
	\begin{equation}\label{eq:physical-outgoing-trace-definition}
		\begin{split}
			&\gamma_+F_3^R\big(t,x_+^\eps(y,w),Re+w\big)\\
			&\quad=\int_0^{\min\{t,L_\eps(y,w)/R\}}\Q_{K,3}(F^R)\big(t-\theta,x_+^\eps(y,w)-\theta(Re+w),Re+w\big)\,d\theta.
		\end{split}
	\end{equation}
	
	For $y\in U$ and $w\in W_3$, define
	\begin{equation}\label{eq:projected-flux}
		\mathscr J_{3,\eps}^e(s,y,w)=\frac{a_\eps(w)\cdot e}{\eps}(\gamma_{+,\eps}H_3^\eps)(s,y,w).
	\end{equation}
	The factor $\eps^{-1}$ accounts for the $O(\eps)$ size of the first reaction term, and $a_\eps(w)\cdot e$ comes from the flux Jacobian in \eqref{eq:outgoing-flux-jacobian}. For a detector profile $\Phi=\Phi(s,y,w)$ supported in $(0,T)\times U\times W_3$, set
	\begin{equation}\label{eq:detector-pairing}
		\langle\mathscr A_{K,\eps}^e[g^\eps],\Phi\rangle=\int_U\int_0^T\int_{W_3}
		\mathscr J_{3,\eps}^e(s,y,w)\Phi(s,y,w)\,dw\,ds\,dy.
	\end{equation}
	For $w\in W_3$, set $\Gamma_{+,\eps}^e(U,w)=\{x_+^\eps(y,w):y\in U\}$.
	
	\begin{proposition}[Physical representation of the detector]\label{prop:exact-detector-identity}
		For every detector $\Phi$ as above,
		\begin{equation}\label{eq:physical-detector-identity}
			\begin{split}
				\langle\mathscr A_{K,\eps}^e[g^\eps],\Phi\rangle =R\int_{W_3}\int_0^{T/R}\int_{\Gamma_{+,\eps}^e(U,w)}		J_{3,R}(t,x,Re+w)\Phi\big(Rt,\pi_{\eps,w}^e(x),w\big)\,dS_x\,dt\,dw.
			\end{split}
		\end{equation}
		Identity \eqref{eq:physical-detector-identity} shows that the functional on the left is determined by the measured outgoing particle flux.
	\end{proposition}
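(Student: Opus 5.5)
The identity is a change of variables from scaled oblique coordinates $(s,y,w)$ to physical coordinates $(t,x,v)$ on the outgoing boundary. I will start from the left-hand side \eqref{eq:detector-pairing} and insert definition \eqref{eq:projected-flux}:
\[
\langle\mathscr A_{K,\eps}^e[g^\eps],\Phi\rangle=\int_{W_3}\int_U\int_0^T \frac{a_\eps(w)\cdot e}{\eps}(\gamma_{+,\eps}H_3^\eps)(s,y,w)\Phi(s,y,w)\,ds\,dy\,dw,
\]
with Fubini justified by Lemma~\ref{lem:trace-bound} together with the boundedness of $\Phi$.

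The first step identifies the scaled trace with the physical one. By \eqref{eq:scaled-H}, translation invariance (Theorem~\ref{thm:collision-estimates}), and the substitution $\theta=\eps r$ (so $r=R\theta$) in \eqref{eq:outgoing-characteristic-trace}, we have
\[
x_+^\eps-ra_\eps(w)=x_+^\eps-\theta(Re+w),\qquad \Q_{K,3}(H^\eps)(s-r,\cdot,w)=\Q_{K,3}(F^R)(\eps s-\theta,\cdot,Re+w).
\]
The upper limit $\min\{s,L_\eps\}$ becomes $\min\{t,L_\eps/R\}$ with $t=\eps s$. Comparing with \eqref{eq:third-outgoing-trace-definition} and \eqref{eq:physical-outgoing-trace-definition} then gives
\[
(\gamma_{+,\eps}H_3^\eps)(s,y,w)=\gamma_+F_3^R(\eps s,x_+^\eps(y,w),Re+w)
\]
for almost every $(s,y,w)$. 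The factor $\eps$ in \eqref{eq:third-outgoing-trace-definition} is exactly absorbed by $d\theta=\eps\,dr$. The collision term $\Q_{K,3}(H^\eps)$ is defined only almost everywhere, so I will invoke the representative-independence established in Lemma~\ref{lem:trace-bound} and the Fubini remark preceding \eqref{eq:global-physical-outgoing-trace}.

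The second step changes variables in $(s,y)$. In time, $s=Rt$ gives $ds=R\,dt$, which cancels $1/\eps=R$ up to one remaining factor of $R$; the interval $(0,T)$ maps to $(0,T/R)$. In space, for fixed $w$, I map $y\mapsto x=x_+^\eps(y,w)$. By Lemma~\ref{lem:characteristic-maps} this is injective onto $\Gamma_{+,\eps}^e(U,w)$, with inverse $\pi_{\eps,w}^e$, because $\pi_{\eps,w}^e$ is constant on $a_\eps$-lines and $y\in e^\perp$. The flux Jacobian \eqref{eq:outgoing-flux-jacobian} gives
\[
(a_\eps(w)\cdot e)\,dy=n(x)\cdot a_\eps(w)\,dS_x=R^{-1}\,n(x)\cdot(Re+w)\,dS_x.
\]
Since $n\cdot(Re+w)>0$ on this sheet by \eqref{eq:oblique-signs}, this equals $R^{-1}(n(x)\cdot v)_+\,dS_x$ with $v=Re+w$. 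Combined with the trace identity, $(n\cdot v)_+\gamma_+F_3^R$ is exactly $J_{3,R}$ from \eqref{eq:intro-product-flux}.

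Collecting the factors gives $R\cdot R\cdot R^{-1}=R$, and $\Phi(s,y,w)=\Phi(Rt,\pi_{\eps,w}^e(x),w)$, which is \eqref{eq:physical-detector-identity}. Two points need explicit mention: $(t,x,Re+w)\in\Gamma_+$ for $x$ on the sheet, and Tonelli/Fubini applies because $\Phi$ is bounded with compact support and the trace lies in $L^1_w(L^\infty)$. I expect the main obstacle to be the careful almost-everywhere identification of the two trace definitions under the null-set ambiguity of the collision source. I will handle it by performing the substitution inside the characteristic integrals for a fixed representative and then citing the Jacobian argument of Lemma~\ref{lem:trace-bound} for independence of that choice. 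Everything else is bookkeeping.
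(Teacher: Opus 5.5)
Your proposal is correct and follows essentially the same route as the paper. The paper identifies the physical and scaled outgoing representatives through the substitution $r=R\theta$ and the translation invariance of the collision operator, then changes variables using $s=Rt$ and the outgoing flux Jacobian \eqref{eq:outgoing-flux-jacobian}; it simply runs this computation from the right-hand side to the left, and your remark that $\pi_{\eps,w}^e$ inverts $y\mapsto x_+^\eps(y,w)$ on the outgoing sheet makes explicit a step the paper uses implicitly.
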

	
	\begin{proof}
		Fix $y\in U$ and $w\in W_3$, and set $s=Rt$. In \eqref{eq:physical-outgoing-trace-definition}, make the change of variables $r=R\theta$. Since $R=\eps^{-1}$ and $Re+w=Ra_\eps(w)$, one has $d\theta=\eps\,dr$, $t-\theta=\eps(s-r)$, and $x_+^\eps(y,w)-\theta(Re+w)=x_+^\eps(y,w)-ra_\eps(w)$. The identity for the collision operator used in the derivation of \eqref{eq:scaled-system} gives
		\[
		\Q_{K,3}(F^R)\big(\eps(s-r),z,Re+w\big)=\Q_{K,3}(H^\eps)(s-r,z,w)
		\]
		for almost every $(s-r,z,w)\in(0,T)\times\Omega\times\R^3$. The change of variables in Lemma~\ref{lem:trace-bound} gives, for almost every $(s,y,w)\in(0,T)\times U\times W_3$,
		\begin{equation}\label{eq:physical-scaled-trace}
			\begin{split}
				\gamma_+F_3^R\big(t,x_+^\eps(y,w),Re+w\big)&=\eps\int_0^{\min\{s,L_\eps(y,w)\}}
				\Q_{K,3}(H^\eps)\big(s-r,x_+^\eps(y,w)-ra_\eps(w),w\big)\,dr\\
				&=(\gamma_{+,\eps}H_3^\eps)(s,y,w).
			\end{split}
		\end{equation}
		
		We now evaluate the right-hand side of \eqref{eq:physical-detector-identity}. On the outgoing sheet,
		\eqref{eq:oblique-signs} gives $\big(n(x_+^\eps(y,w))\cdot(Re+w)\big)_+=Rn(x_+^\eps(y,w))\cdot a_\eps(w)$. Using the parametrization $x=x_+^\eps(y,w)$, the trace identity \eqref{eq:physical-scaled-trace}, the change of variables $s=Rt$, and the flux Jacobian \eqref{eq:outgoing-flux-jacobian}, we obtain
		\begin{align*}
			&\quad\, R\int_{W_3}\int_0^{T/R}\int_{\Gamma_{+,\eps}^e(U,w)}J_{3,R}(t,x,Re+w)
			\Phi\big(Rt,\pi_{\eps,w}^e(x),w\big)\,dS_x\,dt\,dw\\
			&=R\int_{W_3}\int_0^T\int_U(a_\eps(w)\cdot e)(\gamma_{+,\eps}H_3^\eps)(s,y,w)		\Phi(s,y,w)\,dy\,ds\,dw\\
			&=\int_{W_3}\int_0^T\int_U\frac{a_\eps(w)\cdot e}{\eps}(\gamma_{+,\eps}H_3^\eps)(s,y,w)
			\Phi(s,y,w)\,dy\,ds\,dw\\
			&=\langle \mathscr A_{K,\eps}^e[g^\eps],\Phi\rangle,
		\end{align*}
		where $R=\eps^{-1}$ was used in the penultimate equality. This proves 	\eqref{eq:physical-detector-identity}.
	\end{proof}
	
	The detector functional is the normalized outgoing particle count tested against a prescribed function of time, boundary position, and velocity. The time dependence used in the inverse experiment is specified in Definition~\ref{def:measured-functional}. A signed smooth detector profile is the difference of two nonnegative smooth profiles, so its value can be obtained by subtracting two nonnegative readings. Let $\mathfrak D(U,W_3)=L^\infty_w(W_3;L^1_{s,y}((0,T)\times U))$, with norm
	\begin{equation}\label{eq:detector-norm}
		\|\Phi\|_{\mathfrak D(U,W_3)}
		=\esssup_{w\in W_3}\int_U\int_0^T
		|\Phi(s,y,w)|\,ds\,dy.
	\end{equation}
	
	\begin{lemma}[Regularity of the free reaction source]\label{lem:source-regularity}
		Let $K\in\mathfrak K$, and let $W_1,W_2\Subset W$ satisfy $W_1-W_2\Subset\mathscr G^\sharp$. Let $U_1^0,U_2^0$ be the free transport solutions generated by controls with finite $\mathfrak X_-^1$ norms and velocity supports contained in $W_1$ and $W_2$, respectively. Then the velocity support of $S^0$ is contained in the compact set
		\begin{equation}\label{eq:product-output-support}
			\mathcal V_3(W_1,W_2)=\left\{\frac{m_1w+m_2w_*}{M}+\frac{m_4}{M}\rho_+(|w-w_*|)\omega:\,
			w\in\ol W_1,\ w_*\in\ol W_2,\ \omega\in\Sph \right\},
		\end{equation}
		where 
		\begin{equation}\label{eq:S0-definition}
			S^0:=Q_{3,+}^K(U_1^0,U_2^0).
		\end{equation}
		Let $V$ be any open set satisfying $\ol U\subset V\Subset\Omega_e$, and let $W_3\Subset\R^3$ be a bounded open set containing $\mathcal V_3(W_1,W_2)$. Then
		\begin{equation}\label{eq:source-C1-bound}
			\begin{split}
				\int_{W_3}\esssup_{\substack{0<s<T\\ x\in\mathscr T_V\cap\Omega}}\big(|S^0|+|\p_sS^0|+|\nabla_xS^0|\big)(s,x,w)\,dw\le
				C\|g_1\|_{\mathfrak X_-^1}\|g_2\|_{\mathfrak X_-^1}.
			\end{split}
		\end{equation}
		The constant depends on $V$, the velocity sets, the geometry of $\Omega$, and the local $C^1$ bound for $K$ on $\ol\Omega\times\ol{\mathscr G^\sharp}\times\Sph$. It is independent of the width of the normalized packets in Definition~\ref{def:beam-data}.
	\end{lemma}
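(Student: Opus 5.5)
The plan is to work directly from the strong gain formula \eqref{eq:third-gain-strong} with $f_i=U_i^0$, where the free solutions are given by \eqref{eq:free-solution-formula} at $\eps=0$:
\[
U_i^0(s,x,w)=g_i\big(s-\tau_{-,0}(x,w),P_{e^\perp}x,w\big),\qquad \tau_{-,0}(x,w)=x\cdot e-\sigma_-(P_{e^\perp}x,e).
\]
At $\eps=0$ the characteristics are parallel to $e$ for every $w$, so $\tau_{-,0}$ and $P_{e^\perp}x$ do not depend on $w$. This is the feature that will make the velocity integrals factor cleanly.

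\textbf{Support.} If $S^0(s,x,v)\neq0$, then some $(g,\omega)$ in the integrand has $v_1=v-\lambda_3(g)\omega+\frac{m_2}{M}g\in\ol W_1$ and $v_2=v-\lambda_3(g)\omega-\frac{m_1}{M}g\in\ol W_2$. Then $g=v_1-v_2$ and $V=(m_1v_1+m_2v_2)/M$, so $v=V+\frac{m_4}{M}\rho_+(|v_1-v_2|)\omega$. This lies in $\mathcal V_3(W_1,W_2)$, which is compact as a continuous image of $\ol W_1\times\ol W_2\times\Sph$; here $|v_1-v_2|>g_{\rm th}$ because $W_1-W_2\Subset\mathscr G^\sharp$. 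Moreover, every relative velocity that contributes lies in $\ol{W_1-W_2}\subset\mathscr G^\sharp$, so only the local $C^1$ regime of $K$ is ever used.

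\textbf{Zeroth-order bound.} I would bound $|U_i^0(s,x,w)|\le g_i^\sharp(w):=\sup_{s,y}|g_i(s,y,w)|$ and $K\le M_1$ on the relevant set. Taking the supremum over $(s,x)$ inside the $(g,\omega)$ integral of \eqref{eq:third-gain-strong}, integrating in $v$, and undoing the change of variables of \eqref{eq:third-gain-pushforward} (Jacobian one) gives
\[
\int\sup_{s,x}|S^0|\,dv\le 4\pi M_1\|g_1^\sharp\|_{L^1}\|g_2^\sharp\|_{L^1}.
\]

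\textbf{Derivatives.} I would differentiate under the integral sign; the integrand is supported on a compact set of $(g,\omega)$ and differentiability is justified below.
\begin{itemize}
\item The $s$-derivative falls on $\p_sg_1$ or $\p_sg_2$. It is bounded exactly as in the zeroth-order case, with $g_i^\sharp$ replaced by the supremum of $|\p_sg_i|$.
\item The $x$-derivative either falls on $K$, which is bounded by $M_1$ through the $C^1$ bound, or falls on $U_i^0$. In the latter case the chain rule gives
\[
\nabla_xU_i^0=-(\p_sg_i)\,\nabla_x\tau_{-,0}+(\nabla_yg_i)\,P_{e^\perp}.
\]
Here $\nabla_x\tau_{-,0}=e-\nabla_y\sigma_-(P_{e^\perp}x,e)$.
\end{itemize}
Then $\sup(|S^0|+|\p_sS^0|+|\nabla_xS^0|)$ is dominated by a sum of terms of the form $\int\!\int (\cdot)\,h_1(v_1)h_2(v_2)$, where $h_i$ is the $\mathfrak X^1_-$ envelope of $g_i$. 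Each such term integrates in $v$ to at most $C\|g_1\|_{\mathfrak X_-^1}\|g_2\|_{\mathfrak X_-^1}$. Because only these $L^1$ envelopes enter, the constant is independent of the packet width.

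\textbf{Main obstacle.} The delicate step is making sure $\nabla_y\sigma_-(\cdot,e)$ is bounded, since it blows up near grazing chords. For $x\in\mathscr T_V$ we have $P_{e^\perp}x\in\ol V\Subset\Omega_e$, where Lemma~\ref{lem:chord-geometry} gives $C^3$ regularity with uniform bounds, so this is fine. I would also note that $U_i^0$ is $C^1$ in $(s,x)$ on the open tube $\{P_{e^\perp}x\in V\}\cap\Omega$: the $g_i$ vanish near $s=0$, so there is no jump at $s=\tau_{-,0}$ thanks to the compact time support in $(0,T)$. The zero extension in $y$ outside $U_g\Subset U$ is likewise smooth. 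This justifies differentiating under the integral sign and makes the essential supremum over $\mathscr T_V\cap\Omega$ legitimate, which completes \eqref{eq:source-C1-bound}.
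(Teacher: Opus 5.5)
Your proposal is correct and follows essentially the same route as the paper. Both arguments use the explicit $\eps=0$ free field $U_i^0(s,x,w)=g_i(s-x\cdot e+\sigma_-(P_{e^\perp}x,e),P_{e^\perp}x,w)$ and the chain rule, with $\nabla_y\sigma_-(\cdot,e)$ bounded because $\ol V\Subset\Omega_e$ keeps the chords non-grazing. Both obtain the support inclusion from the strong gain formula. Both close with a product-of-envelopes bound, using that every contributing relative velocity lies in $\ol{W_1-W_2}\subset\mathscr G^\sharp$, where $K$ and $\nabla_xK$ are bounded by $M_1$.

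The one place the paper is slightly more careful is differentiation under the integral sign. It justifies this for almost every output velocity via the integrable majorant built from the $\mathfrak X_-^1$ envelopes, rather than by compactness of the $(g,\omega)$-support alone. For the smooth, bounded controls actually used, your justification suffices.
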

	
	\begin{proof}
		We first estimate the free fields. Let $x\in\mathscr T_V\cap\Omega$, and write 	$y=P_{e^\perp}x$ and $\sigma=x\cdot e$. Since $e\in\Sph$, one has $x=y+\sigma e$. At $\eps=0$, the transport direction is $a_0(w)=e$, so the backward characteristic is
		$x-re=y+(\sigma-r)e$. By \eqref{eq:chord}, $\Omega\cap(y+\R e)=\{y+\lambda e:\,\sigma_-(y,e)<\lambda<\sigma_+(y,e)\}$. Since $x\in\Omega$, one has 	$\sigma_-(y,e)<\sigma<\sigma_+(y,e)$. Along the backward characteristic, the line parameter $\sigma-r$ decreases as $r$ increases, and the first boundary point is reached when $\sigma-r=\sigma_-(y,e)$. Hence, $\tau_{-,0}(x,w)=\sigma-\sigma_-(y,e)$ and
		\[
		b_{-,0}(x,w)=x-\tau_{-,0}(x,w)e=y+\sigma_-(y,e)e=x_-(y,e).
		\]
		Moreover, \eqref{eq:oblique-projection} gives $\pi_{0,w}^e=P_{e^\perp}$, because $a_0(w)=e$ and $|e|=1$. We have 
		\[
		Y_{-,0}(x,w)=\pi_{0,w}^e\big(b_{-,0}(x,w)\big)=P_{e^\perp}\big(y+\sigma_-(y,e)e\big)=y.
		\]
		Thus, the free characteristic formula becomes $U_i^0(s,x,w)=g_i\big(s-\sigma+\sigma_-(y,e),y,w\big)$. The functions $\sigma_-(\cdot,e)$ and $P_{e^\perp}$ have bounded first derivatives on the relevant compact set. Applying the chain rule and using the smooth zero extension of the controls gives
		\begin{equation}\label{eq:free-C1-bound}
			\begin{split}
				&\int_{W_i}\esssup_{\substack{0<s<T\\x\in\mathscr T_V\cap\Omega}}
				\big(|U_i^0|+|\p_sU_i^0|+|\nabla_xU_i^0|\big)(s,x,w)\,dw
				\le C\|g_i\|_{\mathfrak X_-^1},\quad i=1,2.
			\end{split}
		\end{equation}
		
		We next consider the gain. By the strong representation \eqref{eq:third-gain-strong},
		\[
		\begin{split}
			S^0(s,x,v_3)=\int_{\R^3}\int_{\Sph}K(x,g,\omega)U_1^0\Big(s,x,	v_3-\lambda_3(g)\omega+\frac{m_2}{M}g\Big)U_2^0\Big(s,x,	v_3-\lambda_3(g)\omega-\frac{m_1}{M}g\Big)\,d\omega\,dg.
		\end{split}
		\]
		If the two input factors are nonzero, their velocity arguments belong to $W_1$ and $W_2$. Thus, the relative velocity belongs to $W_1-W_2$, and the output velocity belongs to $\mathcal V_3(W_1,W_2)$. This proves the support inclusion. Compactness follows from continuity of the reaction map on the compact set $\overline{W_1}\times\overline{W_2}\times\Sph$.
		
		A time derivative falls on one of the two free fields:
		\[
		\p_sS^0=Q_{3,+}^{K}(\p_sU_1^0,U_2^0)+Q_{3,+}^{K}(U_1^0,\p_sU_2^0).
		\]
		To justify differentiation under the integral, bound each input and its $(s,x)$ derivatives by the velocity envelopes in \eqref{eq:free-C1-bound}, and bound $K$ and $\nabla_xK$ by their local majorants. The resulting nonnegative gain expressions are integrable in the output velocity by the estimate for the event rate. For almost every output velocity, this gives an integrable majorant for the integrands and their first derivatives, uniformly on the spatial tube. Differentiation follows by dominated convergence. In particular, for each $x_j$,
		\[
		\begin{split}
			\p_{x_j}S^0=Q_{3,+}^{\p_{x_j}K}(U_1^0,U_2^0)+Q_{3,+}^{K}(\p_{x_j}U_1^0,U_2^0)+
			Q_{3,+}^{K}(U_1^0,\p_{x_j}U_2^0),
		\end{split}
		\]
		where $Q_{3,+}^{\p_{x_j}K}$ denotes the same bilinear gain with $K$ replaced by $\p_{x_j}K$.
		
		Since $W_1-W_2\Subset\mathscr G^\sharp$, the angular integrals of $K$ and $\nabla_xK$ are uniformly bounded for all relative velocities in these formulas. Take the essential supremum in $(s,x)$, bound the inputs by their velocity envelopes, and apply the scalar gain estimate for the forward reaction to obtain
		\[
		\begin{split}
			&\int_{W_3}\esssup_{\substack{0<s<T\\x\in\mathscr T_V\cap\Omega}}
			\big(|S^0|+|\p_sS^0|+|\nabla_xS^0|\big)(s,x,w)\,dw\\
			&\quad\le C\prod_{i=1}^2\bigg[\int_{W_i}\esssup_{\substack{0<s<T\\x\in\mathscr T_V\cap\Omega}}
			\big(|U_i^0|+|\p_sU_i^0|+|\nabla_xU_i^0|\big)(s,x,w)\,dw \bigg].
		\end{split}
		\]
		Estimate \eqref{eq:source-C1-bound} now follows from \eqref{eq:free-C1-bound}.
		
		For the controls in Definition~\ref{def:beam-data}, all derivatives in $(s,y)$ act only on the fixed temporal and spatial profiles, whereas the velocity packets have unit $L^1$ mass. Hence, the
		constant is independent of the packet width.
	\end{proof}
	
	\begin{lemma}[Convergence of the leading reaction term]\label{lem:coefficient-convergence}
		Assume the hypotheses of Lemmas~\ref{lem:free-convergence} and \ref{lem:source-regularity}. Then
		\begin{equation}\label{eq:coefficient-convergence}
			\begin{split}
				\int_{W_3}\esssup_{(s,y)\in(0,T)\times U}\big|\mathcal T_{+,\eps}Q_{3,+}^K(U_1^\eps,U_2^\eps)
				-\mathcal T_{+,0}Q_{3,+}^K(U_1^0,U_2^0)\big|(s,y,w)\,dw\le C\eps.
			\end{split}
		\end{equation}
	\end{lemma}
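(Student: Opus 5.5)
We split the difference into a source part and a characteristic part. Set $S^\eps:=Q_{3,+}^K(U_1^\eps,U_2^\eps)$ and $S^0$ as in \eqref{eq:S0-definition}. Then
\[
\mathcal T_{+,\eps}S^\eps-\mathcal T_{+,0}S^0=\mathcal T_{+,\eps}\big(S^\eps-S^0\big)+\big(\mathcal T_{+,\eps}S^0-\mathcal T_{+,0}S^0\big),
\]
and we estimate the two terms separately in the norm on the left of \eqref{eq:coefficient-convergence}.

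\textbf{Source difference.} By bilinearity,
\[
S^\eps-S^0=Q_{3,+}^K(U_1^\eps-U_1^0,U_2^\eps)+Q_{3,+}^K(U_1^0,U_2^\eps-U_2^0).
\]
The majorant argument in the proof of Theorem~\ref{thm:collision-estimates} gives the envelope bound $\|Q_{3,+}^K(f_1,f_2)\|_{\mathfrak X_T}\le \Lambda_+\|f_1\|_{\mathfrak X_T}\|f_2\|_{\mathfrak X_T}$. Combining this with Lemma~\ref{lem:free-convergence} and \eqref{eq:free-incoming-bound}, \eqref{eq:incoming-uniform} yields $\|S^\eps-S^0\|_{\mathfrak X_T}\le C\eps\|g_1\|_{\mathfrak X_-^1}\|g_2\|_{\mathfrak X_-^1}$. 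Lemma~\ref{lem:trace-bound} then bounds the first term by $CT\eps$, uniformly in the packet width.

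\textbf{Characteristic difference.} Here we use the regularity of $S^0$ from Lemma~\ref{lem:source-regularity} on a tube $\mathscr T_V$ with $\ol U\subset V\Subset\Omega_e$. For $(y,w)\in U\times W_3$, write
\[
\mathcal T_{+,\eps}S^0(s,y,w)-\mathcal T_{+,0}S^0(s,y,w)=\int_0^{\min\{s,L_\eps\}}\Big[S^0(s-r,x_+^\eps-ra_\eps)-S^0(s-r,x_+-re)\Big]dr+\int_{\min\{s,L_0\}}^{\min\{s,L_\eps\}}S^0(s-r,x_+-re)\,dr,
\]
with the evident adjustment of the common range of $r$. The second integral has length at most $|L_\eps-L_0|\le C\eps$ by \eqref{eq:oblique-Oeps}, so it is bounded by $C\eps\,\esssup|S^0|$.

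For the first integral, the mean value theorem in $x$ applies, since $|x_+^\eps-ra_\eps-(x_+-re)|\le|x_+^\eps-x_+|+rT\eps|w|\le C\eps$ for $w$ bounded. This gives a bound $C\eps\sup|\nabla_xS^0|$, provided the whole segment joining the two points lies in $\mathscr T_V\cap\Omega$ where \eqref{eq:source-C1-bound} applies.

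\textbf{Main obstacle.} The delicate step is the geometry near the chord endpoints. The perturbed characteristic point $x_+^\eps(y,w)-ra_\eps(w)$ lies in $\Omega$, but the corresponding straight point $x_+(y,e)-re$, or intermediate points of the segment, may lie just outside $\Omega$ when $r$ is near $0$ or near $L$. The free fields are nevertheless explicit there. Because $g_i(s,y,w)$ has compact support in $s\in(0,T)$ and in $y$, the formula $U_i^0(s,x,w)=g_i(s-x\cdot e+\sigma_-(P_{e^\perp}x,e),P_{e^\perp}x,w)$ extends as a $C^1$ function to a neighborhood of $\mathscr T_V$, with the same $\mathfrak X^1$-type envelope bounds. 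Extending $K$ in $x$ as a $C^1$ function with comparable bounds (possible since $\partial\Omega\in C^3$) then defines a $C^1$ extension $\widetilde S^0$ on a neighborhood of $\mathscr T_V$ satisfying \eqref{eq:source-C1-bound}. It agrees with $S^0$ on $\Omega$, and the mean value estimate applies to it.

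One also needs $P_{e^\perp}$ of all points involved to stay in $V$. This follows from $|\pi^e_{\eps,w}-P_{e^\perp}|\le C\eps$ for $\eps$ small. Where both sources vanish, the estimate is trivial.

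\textbf{Conclusion.} Taking the essential supremum in $(s,y)$ and integrating over $w\in W_3$ gives the bound $C\eps\,\|g_1\|_{\mathfrak X_-^1}\|g_2\|_{\mathfrak X_-^1}$ for the second term. Together with the source-difference bound, this proves \eqref{eq:coefficient-convergence}.
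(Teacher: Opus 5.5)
Your proposal is correct and follows the paper's proof essentially step for step. It uses the same splitting into $\mathcal T_{+,\eps}(S^\eps-S^0)$ and $(\mathcal T_{+,\eps}-\mathcal T_{+,0})S^0$. The first term is handled by the envelope bound for $Q_{3,+}^K$ together with Lemma~\ref{lem:free-convergence} and Lemma~\ref{lem:trace-bound}, and the second by an $O(\eps)$-length tail plus a mean value estimate based on \eqref{eq:source-C1-bound}.

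The only difference is your $C^1$ extension of $S^0$ beyond $\Omega$. It is valid but unnecessary. The paper compares the two integrands only on the common range $0<r<\min\{s,L_\eps(y,w),L(y,e)\}$. There both $x_+^\eps(y,w)-ra_\eps(w)$ and $x_+(y,e)-re$ lie in $\Omega$, so convexity puts the segment joining them in $\Omega$, and in $\mathscr T_V$ for small $\eps$. The leftover range has length $O(\eps)$ and is integrated along whichever characteristic is still inside $\Omega$, so the endpoint obstacle you describe does not arise.
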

	
	\begin{proof}
		Set $S^\eps=Q_{3,+}^K(U_1^\eps,U_2^\eps)$ and $S^0=Q_{3,+}^K(U_1^0,U_2^0)$. Split
		\begin{equation}\label{eq:coefficient-splitting}
			\mathcal T_{+,\eps}S^\eps-\mathcal T_{+,0}S^0=\mathcal T_{+,\eps}(S^\eps-S^0)
			+(\mathcal T_{+,\eps}-\mathcal T_{+,0})S^0.
		\end{equation}
		First, the mixed-norm bilinear gain estimate and Lemma~\ref{lem:free-convergence} give
		\begin{equation}\label{eq:source-eps-minus-zero}
			\begin{split}
				\|S^\eps-S^0\|_{\mathfrak X_T}\le C\|U_1^\eps-U_1^0\|_{\mathfrak X_T}
				\|U_2^\eps\|_{\mathfrak X_T}+C\|U_1^0\|_{\mathfrak X_T}\|U_2^\eps-U_2^0\|_{\mathfrak X_T}\le C\eps.
			\end{split}
		\end{equation}
		Lemma~\ref{lem:trace-bound} now bounds the first term on the right-hand side of \eqref{eq:coefficient-splitting} by $C\eps$ in $L^1_wL^\infty_{s,y}$.
		
		For the second term in \eqref{eq:coefficient-splitting}, set $\ell_\eps(s,y,w)=\min\{s,L_\eps(y,w)\}$, $\ell_0(s,y)=\min\{s,L(y,e)\}$, $p_\eps(r)=x_+^\eps(y,w)-ra_\eps(w)$, and $p_0(r)=x_+(y,e)-re$. Then
		\[
		(\mathcal T_{+,\eps}S^0)(s,y,w)=\int_0^{\ell_\eps}S^0(s-r,p_\eps(r),w)\,dr,
		\quad
		(\mathcal T_{+,0}S^0)(s,y,w)=\int_0^{\ell_0}S^0(s-r,p_0(r),w)\,dr.
		\]
		Lemma~\ref{lem:characteristic-maps} gives $|\ell_\eps-\ell_0|+|x_+^\eps(y,w)-x_+(y,e)|+|a_\eps(w)-e|\le C\eps$ uniformly on $(0,T)\times U\times W_3$. Let $m=\min\{\ell_\eps,\ell_0\}$. Then
		\begin{align*}
			\big|(\mathcal T_{+,\eps}-\mathcal T_{+,0})S^0(s,y,w)\big|
			&\le\int_0^m\big|S^0(s-r,p_\eps(r),w)-S^0(s-r,p_0(r),w)\big|\,dr\\
			&\quad\, +\int_m^{\ell_\eps}|S^0(s-r,p_\eps(r),w)|\,dr
			+\int_m^{\ell_0}|S^0(s-r,p_0(r),w)|\,dr.
		\end{align*}
		At most one of the last two integrals is nonzero, and its interval has length at most $C\eps$. Moreover, $|p_\eps(r)-p_0(r)|\le C\eps$ for $0\le r\le m$.
		
		Choose an open set $V$ such that $\ol U\subset V\Subset\Omega_e$. For sufficiently small $\eps$, the points $p_\eps(r)$, $p_0(r)$, and the segment joining them belong to $\mathscr T_V\cap\ol\Omega$ for $0<r<m$. The boundary endpoints form a null set in the $r$-integration. The mean value theorem and \eqref{eq:source-C1-bound} therefore give
		\[
		\big|(\mathcal T_{+,\eps}-\mathcal T_{+,0})S^0(s,y,w)\big|
		\le C\eps\esssup_{\substack{0<\widetilde s<T\\ x\in\mathscr T_V\cap\Omega}}
		\big(|S^0|+|\nabla_xS^0|\big)(\widetilde s,x,w).
		\]
		Taking the essential supremum in $(s,y)$ and integrating in $w$ yields
		\begin{equation}\label{eq:trace-operator-convergence}
			\|(\mathcal T_{+,\eps}-\mathcal T_{+,0})S^0\|_{L^1_wL^\infty_{s,y}}\le C\eps.
		\end{equation}
		Combining \eqref{eq:source-eps-minus-zero}, Lemma~\ref{lem:trace-bound}, and \eqref{eq:trace-operator-convergence} proves \eqref{eq:coefficient-convergence}.
	\end{proof}
	
	\begin{theorem}[Asymptotics of the measured outgoing flux]\label{thm:flux-functional}
		Let $K\in\mathfrak K$, with $\mathfrak K$ defined by \eqref{eq:admissible-kernel-class}. Let $0\le g_i\in C_c^\infty((0,T)\times U\times W_i)$, $i=1,2$, have finite $\mathfrak X_-^1$ norms, where $W_1,W_2\Subset W$ and $W_1-W_2\Subset\mathscr G^\sharp$. Let $g_i^\eps$ be the exact incoming pullbacks defined by \eqref{eq:incoming-pullback}, set $g^\eps=(g_1^\eps,g_2^\eps,0,0)$, and let $H^\eps$ be the corresponding mild solution. Let $W_3\Subset\R^3$ be a bounded open set containing the velocity support of the products generated by $W_1$ and $W_2$. Then there exists $\eps_1>0$ such that, for every $0<\eps\le\eps_1$ and every $\Phi\in C_c^1((0,T)\times U\times W_3)$,
		\begin{equation}\label{eq:weak-flux-asymptotic}
			\begin{split}
				&\bigg|\langle\mathscr A_{K,\eps}^e[g^\eps],\Phi\rangle-\int_U\int_0^T\int_{W_3}\big(\mathcal T_{+,0}Q_{3,+}^K(U_1^0,U_2^0)\big)(s,y,w)\Phi(s,y,w)\,dw\,ds\,dy\bigg|\\
				&\le C\eps\|\Phi\|_{\mathfrak D(U,W_3)}.
			\end{split}
		\end{equation}
		The number $\eps_1$ and the constant $C$ depend on the fixed geometry, the velocity supports, the bounds on the collision rates, the local $C^1$ bound $M_1$ in \eqref{eq:admissible-kernel-class}, and the $\mathfrak X_-^1$ norms of the controls. They may be chosen uniformly for the normalized packet families introduced in Definition~\ref{def:beam-data}.
	\end{theorem}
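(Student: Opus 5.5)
The estimate follows by writing the detector functional through the outgoing Duhamel representative and then comparing it with the limiting expression in two steps: first remove the multiple-collision remainder, then pass from the $\eps$-characteristics to the limiting ones. By \eqref{eq:projected-flux}, \eqref{eq:detector-pairing} and \eqref{eq:third-outgoing-trace-definition},
\[
\langle\mathscr A_{K,\eps}^e[g^\eps],\Phi\rangle=\int_U\int_0^T\int_{W_3}(a_\eps(w)\cdot e)\,\big(\mathcal T_{+,\eps}\Q_{K,3}(H^\eps)\big)(s,y,w)\Phi(s,y,w)\,dw\,ds\,dy.
\]
The factor $\eps$ in the trace cancels the $\eps^{-1}$ in \eqref{eq:projected-flux}. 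Since $U_3^\eps=U_4^\eps=0$ and the channel identity \eqref{eq:channel-identity} holds, I will write
\[
\Q_{K,3}(H^\eps)=Q_{3,+}^K(U_1^\eps,U_2^\eps)+\big[\Q_{K,3}(H^\eps)-\Q_{K,3}(U^\eps)\big].
\]

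The first step treats the remainder. I choose $\eps_1$ so that $\eps_1TC_0M_{\rm in}\le1/16$, with $M_{\rm in}$ the bound \eqref{eq:incoming-uniform} for the pulled-back controls. Theorem~\ref{thm:forward-wp} then applies, and \eqref{eq:one-reaction-source-difference} together with \eqref{eq:H-U} gives $\|\Q_{K,3}(H^\eps)-\Q_{K,3}(U^\eps)\|_{\mathfrak X_T}\le C\eps$. Lemma~\ref{lem:trace-bound} converts this into an $L^1_w(W_3;L^\infty_{s,y})$ bound of order $\eps$ for its outgoing Duhamel representative. The required duality pairing is elementary:
\[
\Big|\int_U\int_0^T\int_{W_3}h\Phi\Big|\le\int_{W_3}\esssup_{s,y}|h|\int_U\int_0^T|\Phi|\,ds\,dy\,dw\le\|h\|_{L^1_wL^\infty_{s,y}}\|\Phi\|_{\mathfrak D(U,W_3)}.
\]
The factor $a_\eps\cdot e$ is bounded by $1+C\eps$, so this contribution is $O(\eps\|\Phi\|_{\mathfrak D})$.

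The second step compares the leading term with its limit. The error from replacing $a_\eps(w)\cdot e$ by $1$ is $\eps|w\cdot e|\le C\eps$ on the bounded set $W_3$. It multiplies $\mathcal T_{+,\eps}Q_{3,+}^K(U_1^\eps,U_2^\eps)$, whose $L^1_wL^\infty$ norm is bounded by Lemma~\ref{lem:trace-bound} and Theorem~\ref{thm:collision-estimates}. The difference
\[
\mathcal T_{+,\eps}Q_{3,+}^K(U_1^\eps,U_2^\eps)-\mathcal T_{+,0}Q_{3,+}^K(U_1^0,U_2^0)
\]
is $O(\eps)$ in $L^1_wL^\infty_{s,y}$ by Lemma~\ref{lem:coefficient-convergence}. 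Each of these terms is then paired with $\Phi$ using the same duality inequality, so each contributes $O(\eps\|\Phi\|_{\mathfrak D})$.

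The main obstacle is uniformity of constants, in particular independence from the packet width. All constants come from $C_0\le4(\Lambda_{\rm el}+M_0)$, from $M_1$, and from the $\mathfrak X_-^1$ norms through Lemmas~\ref{lem:free-convergence}, \ref{lem:source-regularity} and \ref{lem:coefficient-convergence}. Those lemmas state that their constants depend only on these quantities and on the fixed profiles, so the constants are uniform over the normalized packet families. I will also need to check that the velocity support of $Q_{3,+}^K(U_1^\eps,U_2^\eps)$ lies in $W_3$, so that restricting $w$ to $W_3$ in the pairing loses nothing. This follows from Galilean covariance and the support inclusion \eqref{eq:product-output-support}.
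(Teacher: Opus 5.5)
Your proposal is correct and follows essentially the same route as the paper: split off the multiple-collision remainder using Lemma~\ref{lem:trace-bound}, \eqref{eq:Q-Lipschitz} and \eqref{eq:H-U}, replace the $\eps$-level coefficient by the limiting one through Lemma~\ref{lem:coefficient-convergence}, absorb $a_\eps(w)\cdot e=1+O(\eps)$, and pair with $\Phi$ through the $\mathfrak D(U,W_3)$ norm. The differences are cosmetic. You absorb the factor $a_\eps\cdot e$ against the $\eps$-level coefficient rather than the limiting one. Your closing support check is also not needed: the pairing \eqref{eq:detector-pairing} is already defined over $W_3$, and the inclusion $\mathcal V_3(W_1,W_2)\subset W_3$ enters only as a hypothesis of Lemma~\ref{lem:source-regularity}.
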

	
	\begin{proof}
		The outgoing formula \eqref{eq:third-outgoing-trace-definition} expresses $\eps^{-1}\gamma_{+,\eps}H_3^\eps$ as $\mathcal T_{+,\eps}\Q_{K,3}(H^\eps)$. The free product components vanish, so \eqref{eq:channel-identity} identifies $\Q_{K,3}(U^\eps)$ with $Q_{3,+}^K(U_1^\eps,U_2^\eps)$. Applying Lemma~\ref{lem:trace-bound}, \eqref{eq:Q-Lipschitz}, and \eqref{eq:H-U}, we obtain
		\begin{equation}\label{eq:normalized-trace-remainder}
			\begin{split}
				\big\|\eps^{-1}\gamma_{+,\eps}H_3^\eps-\mathcal T_{+,\eps}Q_{3,+}^K(U_1^\eps,U_2^\eps)\big\|_{L^1_wL^\infty_{s,y}}
				\le T\|\Q_K(H^\eps)-\Q_K(U^\eps)\|_{\mathfrak X_T}\le C\eps.
			\end{split}
		\end{equation}
		Lemma~\ref{lem:coefficient-convergence} replaces the coefficient above by $\mathcal T_{+,0}Q_{3,+}^K(U_1^0,U_2^0)$ with another error $C\eps$.
		
		Finally, $a_\eps(w)\cdot e=1+O(\eps)$ uniformly for $w\in W_3$. The limiting coefficient is uniformly bounded in $L^1_wL^\infty_{s,y}$, so multiplication by $a_\eps(w)\cdot e$ introduces only another $O(\eps)$ error. Pairing with $\Phi$ and using the definition \eqref{eq:detector-norm} proves
		\eqref{eq:weak-flux-asymptotic}.
	\end{proof}
	
	By \eqref{eq:physical-detector-identity}, Theorem~\ref{thm:flux-functional} gives the asymptotic behavior of the measured outgoing flux. The argument uses the characteristic representative defined above and does not require a pointwise kinetic trace theorem.

	\section{Product measurements and the X-ray transform}\label{sec:measurements}
	
	\subsection{Synchronized incident pulses}\label{subsec:incident-pulses}
	
	Fix $e\in\Sph$ and a relatively compact open set $U\Subset\Omega_e$. Choose an open set $U^\sharp$ such that $U\Subset U^\sharp\Subset\Omega_e$, and choose $\zeta\in C_c^\infty(U^\sharp)$ satisfying $0\le\zeta\le1$ and $\zeta=1$ on a neighborhood of $\ol U$. Since $T>\diam(\Omega)$, we may choose $\tau_*>0$ and $s_0>2\tau_*$ such that $s_0+\diam(\Omega)+2\tau_*<T$. Choose $\alpha\in C_c^\infty(\R)$ such that $0\le\alpha\le1$, $\alpha=1$ on $[-2\tau_*,2\tau_*]$, and $s_0+\supp\alpha\Subset(0,T)$. Then $s_0+L(y,e)+[-\tau_*,\tau_*]\Subset(0,T)$ for every $y\in U$. Fix $g_0\in\mathscr G$. We choose
	\begin{equation}\label{eq:offsets}
		w_1=\frac{m_2}{M}g_0, \quad w_2=-\frac{m_1}{M}g_0.
	\end{equation}
	Then $w_1-w_2=g_0$ and $m_1w_1+m_2w_2=0$. Let $\psi\in C_c^\infty(\R^3)$ be nonnegative with integral one, and set $\psi_{i,\delta}(w)=\delta^{-3}\psi((w-w_i)/\delta)$ for $i=1,2$. Since $\supp\psi_{i,\delta}=w_i+\delta\supp\psi$, these supports shrink to $w_i$ as $\delta\to0$. Since $g_0\in\mathscr G\Subset\mathscr G^\sharp$, we may choose $\delta_0>0$ and bounded open sets $W_1,W_2,W\Subset\R^3$ such that $w_i\in W_i\Subset W$, $\supp\psi_{i,\delta}\subset W_i$ for $0<\delta\le\delta_0$, and $W_1-W_2\Subset\mathscr G^\sharp$. If $g_0$ varies in a compact subset $\mathscr G_0\Subset\mathscr G$, we may choose $\delta_0$ and $W$ independently of $g_0$. We take $W_i(g_0)$ to be balls of a fixed sufficiently small radius about $w_i(g_0)$, so that all the sets $\ol{W_1(g_0)-W_2(g_0)}$ lie in a fixed compact subset of $\mathscr G^\sharp$.
	
	After decreasing $\eps_0>0$ if necessary, we apply Lemma~\ref{lem:characteristic-maps} with $U^\sharp$ in place of $U$ and with the fixed velocity set $W$.
	
	\begin{definition}[Restricted beam data]\label{def:beam-data}
		For $i=1,2$, define
		\begin{equation}\label{eq:beam-control}
			g_{i,\delta}(s,y,w)=\zeta(y)\alpha(s-s_0)\psi_{i,\delta}(w),\quad (s,y,w)\in(0,T)\times U^\sharp\times W.
		\end{equation}
		For $0\le\eps\le\eps_0$, let $g_{i,\delta}^\eps$ be the exact incoming pullback of $g_{i,\delta}$, namely,
		\begin{equation}\label{eq:beam-incoming-pullback}
			g_{i,\delta}^\eps\big(s,x_-^\eps(y,w),w\big)=g_{i,\delta}(s,y,w),\quad (s,y,w)\in(0,T)\times U^\sharp\times W,
		\end{equation}
		and set $g_{i,\delta}^\eps=0$ on the rest of $\Sigma_{-,\eps}$. The third and fourth incoming components vanish,
		and we set $g^{\eps,\delta}=\big(g_{1,\delta}^\eps,g_{2,\delta}^\eps,0,0\big)$.
	\end{definition}
	
	As in the exact incoming pullback construction, we extend $g_{i,\delta}$ by zero to a smooth function on $\R\times e^\perp\times\R^3$.
	
	For $0<\eps\le\eps_0$ and $R=\eps^{-1}$, define the physical incoming data, for $i=1,2$, by
	\begin{equation}\label{eq:physical-beam-data-definition}
		G_{i,R}^{\eps,\delta}(t,x,v):=g_{i,\delta}^{\eps}\big(Rt,x,v-Re\big),\quad (t,x,v)\in(0,T/R)\times\Gamma_-.
	\end{equation}
	If $w=v-Re$, then $a_\eps(w)=e+\eps(v-Re)=\eps v$, since $\eps R=1$. In particular, $n(x)\cdot v<0$ if and only if $n(x)\cdot a_\eps(w)<0$, so the change of variables preserves the incoming condition. We may also write
	\begin{equation}\label{eq:physical-scaled-beam-relation}
		G_{i,R}^{\eps,\delta}(t,x,Re+w)=g_{i,\delta}^{\eps}(Rt,x,w),\quad i=1,2.
	\end{equation}
	
	On the incoming boundary patch parametrized by $(s,y,w)\in(0,T)\times U^\sharp\times W$, relations \eqref{eq:beam-incoming-pullback} and \eqref{eq:physical-scaled-beam-relation} give
	\begin{equation}\label{eq:physical-beam-incoming-data}
		\begin{aligned}
			G_{i,R}^{\eps,\delta}\big(\eps s,x_-^\eps(y,w),Re+w\big)=g_{i,\delta}^{\eps}		\big(s,x_-^\eps(y,w),w\big)=g_{i,\delta}(s,y,w),\quad i=1,2.
		\end{aligned}
	\end{equation}
	Since $g_{i,\delta}^{\eps}=0$ on the rest of $\Sigma_{-,\eps}$, definition \eqref{eq:physical-beam-data-definition} gives $G_{i,R}^{\eps,\delta}=0$ outside the corresponding incoming boundary patch. The third and fourth species have zero incoming data: $G_{3,R}^{\eps,\delta}=G_{4,R}^{\eps,\delta}=0$.
	
	Let $U_\delta^\eps=(U_{1,\delta}^\eps,\ldots,U_{4,\delta}^\eps)$ denote the free transport solution with zero initial data and incoming boundary value $g^{\eps,\delta}$. We next verify that the incoming norms are uniform in $\eps$ and $\delta$. By \eqref{eq:incoming-uniform}, the bounds $0\le\zeta,\alpha\le1$, and the normalization of the velocity packets,
	\begin{align*}
		\|g^{\eps,\delta}\|_{\mathfrak X_{-,\eps}}\le \sum_{i=1}^2\int_W\esssup_{(s,y)\in(0,T)\times U^\sharp}
		\big|\zeta(y)\alpha(s-s_0)\psi_{i,\delta}(w)\big|\,dw\le\sum_{i=1}^2\int_W\psi_{i,\delta}(w)\,dw=2.
	\end{align*}
	Here we used $\supp\psi_{i,\delta}\subset W$ and $\int_{\R^3}\psi_{i,\delta}(w)\,dw=1$. Thus, the incoming norm is bounded independently of $\eps$ and $\delta$.
	
	The $\mathfrak X_-^1$ norms needed for the estimates along free characteristics are also bounded independently of $\delta$, because the derivatives in $s$ and $y$ act only on the fixed profiles $\alpha$ and $\zeta$. In particular,
	\[
	\|g_{i,\delta}\|_{\mathfrak X_-^1}\le C_{\zeta,\alpha}\int_{\R^3}\psi_{i,\delta}(w)\,dw=C_{\zeta,\alpha},
	\quad i=1,2, 
	\]
	where $C_{\zeta,\alpha}=\|\zeta\|_{L^\infty}(\|\alpha\|_{L^\infty}+\|\alpha'\|_{L^\infty})+\|\nabla_y\zeta\|_{L^\infty}\|\alpha\|_{L^\infty}$. Thus, $\|g_{1,\delta}\|_{\mathfrak X_-^1}+\|g_{2,\delta}\|_{\mathfrak X_-^1}$ is bounded independently of $\delta$. After decreasing $\eps_0$ once more, assume that $2\eps_0TC_0\le1/16$. Since $\|g^{\eps,\delta}\|_{\mathfrak X_{-,\eps}}\le2$, Theorem~\ref{thm:forward-wp} applies for every $0<\eps\le\eps_0$ and $0<\delta\le\delta_0$. Let $H_\delta^\eps$ denote the resulting nonnegative mild solution of \eqref{eq:scaled-ibvp}. The outgoing trace and projected flux associated with $H_\delta^\eps$ are denoted as in Subsection~\ref{subsec:boundary-flux}. From this point on, their dependence on $\delta$ is suppressed from the notation.
	
	At $\eps=0$, if $y\in U^\sharp$ and $\sigma_-(y,e)<\sigma<\sigma_+(y,e)$, then $\tau_{-,0}(y+\sigma e,w)=\sigma-\sigma_-(y,e)$ and $Y_{-,0}(y+\sigma e,w)=y$. Formula \eqref{eq:free-solution-formula} gives
	\begin{equation}\label{eq:free-beams}
		\begin{aligned}
			U_{i,\delta}^0(s,y+\sigma e,w)=g_{i,\delta}\big(s-\sigma+\sigma_-(y,e),y,w\big)=\zeta(y)\alpha\big(s-s_0-(\sigma-\sigma_-(y,e))\big)\psi_{i,\delta}(w),\quad i=1,2.
		\end{aligned}
	\end{equation}

	\begin{lemma}[Constancy of the total flight time]\label{lem:focusing}
		Fix $y\in U$ and $\sigma_-(y,e)<\sigma<\sigma_+(y,e)$. At $\eps=0$, consider the contribution of a forward reaction at $y+\sigma e$ to the third component observed at $x_+(y,e)$ and at the scaled exit time $s_{\rm out}=s_0+L(y,e)+r$. The corresponding reaction time is $s_{\rm col}=s_{\rm out}-\big(\sigma_+(y,e)-\sigma\big)$. At this time, the two incoming free fields satisfy
		\[
		\begin{aligned}
			U_{1,\delta}^0(s_{\rm col},y+\sigma e,w)&=\zeta(y)\alpha(r)\psi_{1,\delta}(w),\\
			U_{2,\delta}^0(s_{\rm col},y+\sigma e,w_*)&=\zeta(y)\alpha(r)\psi_{2,\delta}(w_*).
		\end{aligned}
		\]
		Furthermore, the temporal factor in the product $U_{1,\delta}^0U_{2,\delta}^0$ is $\alpha(r)^2$ and is independent of the reaction position $\sigma$. In particular, if $|r|\le\tau_*$, then $\alpha(r)^2=1$.
	\end{lemma}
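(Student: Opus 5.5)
The plan is a direct substitution into the explicit free-beam formula \eqref{eq:free-beams}, followed by reading off the temporal argument. At $\eps=0$ the transport direction is $a_0(w)=e$ for every $w$. A product particle of species $3$ created at $y+\sigma e$ and observed at $x_+(y,e)=y+\sigma_+(y,e)e$ therefore travels the distance $\sigma_+(y,e)-\sigma$ with unit scaled speed. This is consistent with the outgoing Duhamel representative $\mathcal T_{+,0}$ in \eqref{eq:outgoing-characteristic-trace}: the point $p_0(\rho)=x_+(y,e)-\rho e$ equals $y+\sigma e$ exactly when $\rho=\sigma_+(y,e)-\sigma$. The reaction time is thus $s_{\rm col}=s_{\rm out}-(\sigma_+(y,e)-\sigma)$, as stated.

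The first step is to compute $s_{\rm col}$. Inserting $s_{\rm out}=s_0+L(y,e)+r$ and $L(y,e)=\sigma_+(y,e)-\sigma_-(y,e)$ gives
\[
s_{\rm col}=s_0+r+\sigma-\sigma_-(y,e).
\]
The second step is to evaluate \eqref{eq:free-beams} at $s=s_{\rm col}$. The temporal argument becomes
\[
s_{\rm col}-s_0-(\sigma-\sigma_-(y,e))=r,
\]
so $U_{i,\delta}^0(s_{\rm col},y+\sigma e,w)=\zeta(y)\alpha(r)\psi_{i,\delta}(w)$ for $i=1,2$. Neither the velocity nor the species enters the temporal factor. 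This is precisely the synchronization built into Definition~\ref{def:beam-data}: both packets share the same profile $\alpha(\cdot-s_0)$ and, at $\eps=0$, the same speed $e$.

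The last step collects the product. The temporal factor of $U_{1,\delta}^0(s_{\rm col},\cdot,w)\,U_{2,\delta}^0(s_{\rm col},\cdot,w_*)$ is $\alpha(r)^2$, and $\sigma$ has cancelled. If $|r|\le\tau_*\le2\tau_*$, then $\alpha(r)=1$ by the choice of $\alpha$, so $\alpha(r)^2=1$.

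The only point requiring care is checking that formula \eqref{eq:free-beams} applies at $(s_{\rm col},y+\sigma e)$. It requires $y\in U^\sharp$, which holds since $y\in U$, and $\sigma\in(\sigma_-(y,e),\sigma_+(y,e))$, which is assumed. The representation \eqref{eq:free-solution-formula} already covers the case $s\le\tau_{-,0}$ through the zero extension of $\alpha(\cdot-s_0)$. There is one more subtlety. If $s_{\rm col}\notin(0,T)$, the identity should be read through the smooth zero-extended profile; the support assumption on $\alpha$ together with $s_0>2\tau_*$ keeps the relevant times inside $(0,T)$ when $|r|\le\tau_*$. I therefore expect no real obstacle: the argument reduces to the algebraic identity $L(y,e)-(\sigma_+(y,e)-\sigma)=\sigma-\sigma_-(y,e)$.
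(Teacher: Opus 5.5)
Your proposal is correct and follows the paper's proof essentially step for step. Both arguments take the unit-speed travel time $\sigma_+(y,e)-\sigma$ at $\eps=0$ to get $s_{\rm col}$, substitute into \eqref{eq:free-beams}, use $L(y,e)=\sigma_+(y,e)-\sigma_-(y,e)$ to reduce the temporal argument to $r$, and finish with $\alpha=1$ on $[-2\tau_*,2\tau_*]$. Your extra check that $s_{\rm col}\in(0,T)$ when $|r|\le\tau_*$ is a harmless addition that the paper leaves implicit.
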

	
	\begin{proof}
		At $\eps=0$, we have $a_0(\widetilde w)=e$ for every recentered velocity $\widetilde w\in\R^3$. Since $|e|=1$, the scaled travel time from $y+\sigma e$ to $x_+(y,e)=y+\sigma_+(y,e)e$ is $\sigma_+(y,e)-\sigma$. A particle observed at time $s_{\rm out}$ was created at time $s_{\rm col}=s_0+L(y,e)+r-\big(\sigma_+(y,e)-\sigma\big)$.

		By \eqref{eq:free-beams}, the argument of the temporal pulse in either incoming free field at $(s_{\rm col},y+\sigma e)$ is
		\[
		\begin{aligned}
			s_{\rm col}-s_0-\big(\sigma-\sigma_-(y,e)\big)=L(y,e)+r-\big(\sigma_+(y,e)-\sigma\big)
			-\big(\sigma-\sigma_-(y,e)\big)=r,
		\end{aligned}
		\]
		because $L(y,e)=\sigma_+(y,e)-\sigma_-(y,e)$. Substitution into \eqref{eq:free-beams} gives the two formulas, whose temporal factors are independent of $\sigma$. Since $\alpha=1$ on $[-2\tau_*,2\tau_*]$, we have $\alpha(r)^2=1$ for $|r|\le\tau_*$.
	\end{proof}
	
	Recall that $w=v-Re$ is the recentered velocity relative to the common physical velocity $Re$. By \eqref{eq:offsets}, the central incoming pair $Re+w_1$, $Re+w_2$ has relative velocity $g_0$ and center-of-mass velocity $Re$. Substitution into the forward reaction map \eqref{eq:forward-map} gives
	\[
	\mathcal V_3^+(Re+w_1,Re+w_2,\omega)=Re+\lambda_3(g_0)\omega.
	\]
	We set
	\begin{equation}\label{eq:reaction-sphere}
		\lambda_0:=\lambda_3(g_0)=\frac{m_4}{M}\rho_+(|g_0|)>0,
		\quad \Sigma_3(g_0):=\{\lambda_0\omega:\ \omega\in\Sph\}.
	\end{equation}
	For the central incoming velocities, the recentered velocities of the third species lie on $\Sigma_3(g_0)$, and their physical velocities lie on $Re+\Sigma_3(g_0)$. The radius $\lambda_0$ is determined by $g_0$, the masses, and the reaction energy through $\rho_+$. Packets of width $\delta>0$ produce velocities in a neighborhood of this sphere.
	
	\subsection{Detection of product velocities}\label{subsec:velocity-detection}
	
	Let $\mathcal O\Subset\Sph$ be an accessible set of outgoing directions, and let $\varphi\in C_c^\infty(\mathcal O)$. We use the same notation for its smooth zero extension to $\Sph$. To realize this angular test by a detector acting on product velocities in $\R^3$, we extend $\varphi$ to a smooth function of the recentered velocity.
	
	Write $z\in\R^3$ for the recentered velocity of the third species, so its physical velocity is $Re+z$. Choose $0<d_0<\lambda_0/4$ and $\vartheta\in C_c^\infty((-2d_0,2d_0))$ such that $0\le\vartheta\le1$ and $\vartheta=1$ on $[-d_0,d_0]$. If $g_0$ varies in a compact set $\mathscr G_0\Subset\mathscr G$, choose $d_0$ and $\vartheta$ independently of $g_0$, with $4d_0<\inf_{g\in\mathscr G_0}\lambda_3(g)$. Define
	\begin{equation}\label{eq:tubular-filter}
		\chi_{g_0,\varphi}(z)=
		\begin{cases}
			\vartheta(|z|-\lambda_0)\varphi\big(\frac{z}{|z|}\big),&\text{ if }z\neq0,\\
			0,&\text{ if }z=0.
		\end{cases}
	\end{equation}
	The radial factor restricts the support of $\chi_{g_0,\varphi}$ to $\{z\in\R^3:\, \big||z|-\lambda_0\big|<2d_0\big\}$. Since $2d_0<\lambda_0/2$, this set is disjoint from $\{|z|\le\lambda_0/2\}$, so the angular factor is smooth wherever the radial factor is nonzero. It follows that $\chi_{g_0,\varphi}\in C_c^\infty(\R^3)$. On $\Sigma_3(g_0)$,
	\begin{equation}\label{eq:filter-on-sphere}
		\chi_{g_0,\varphi}(\lambda_0\omega)=\vartheta(0)\varphi(\omega)=\varphi(\omega), \quad \omega\in\Sph.
	\end{equation}
	For a physical product velocity $v=Re+\lambda_0\omega$, the filter $\chi_{g_0,\varphi}(v-Re)$ equals $\varphi(\omega)$.
	
	\begin{lemma}[Velocity localization]\label{lem:velocity-localization}
		Let $K\in\mathfrak K$, with $\mathfrak K$ defined by \eqref{eq:admissible-kernel-class}, and define
		\begin{equation}\label{eq:W3}
			\mathcal W_3(w,w_*,\omega)=\frac{m_1w+m_2w_*}{M}+\frac{m_4}{M}\rho_+(|w-w_*|)\omega.
		\end{equation}
		For $0<\delta\le\delta_0$, set
		\begin{equation}\label{eq:A-delta-velocity-localization}
			\begin{split}
				A_\delta(x)&=\int_{\R^3}\int_{\R^3}\int_{\Sph}K(x,w-w_*,\omega)\psi_{1,\delta}(w)\psi_{2,\delta}(w_*)\chi_{g_0,\varphi}\big(\mathcal W_3(w,w_*,\omega)\big)\,d\omega\,dw_*\,dw,\\
				A_0(x)&=\int_{\Sph}K(x,g_0,\omega)\varphi(\omega)\,d\omega.
			\end{split}
		\end{equation}
		For every compact set $\mathscr G_0\Subset\mathscr G$, there exist $C>0$ and $0<\delta_1\le\delta_0$ such that
		\begin{equation}\label{eq:velocity-localization-error}
			|A_\delta(x)-A_0(x)|\le C\delta\|\varphi\|_{C^1(\Sph)}
		\end{equation}
		for every $x\in\ol\Omega$, $g_0\in\mathscr G_0$, and $0<\delta\le\delta_1$, with a constant independent of $x$, $g_0$, $\delta$, and $\varphi$. In particular,
		\begin{equation}\label{eq:velocity-localization}
			\lim_{\delta\to0}A_\delta(x)=\int_{\Sph}K(x,g_0,\omega)\varphi(\omega)\,d\omega
		\end{equation}
		uniformly in $x$, in $g_0$ on compact subsets of $\mathscr G$, and in $\varphi$ on bounded subsets of $C^1(\Sph)$.
	\end{lemma}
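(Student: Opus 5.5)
The plan is to write $A_\delta(x)-A_0(x)$ as an average against the probability measure $\psi_{1,\delta}(w)\psi_{2,\delta}(w_*)\,dw\,dw_*$. Since $\int\psi_{1,\delta}=\int\psi_{2,\delta}=1$, we have
\[
A_0(x)=\int_{\R^3}\int_{\R^3}\psi_{1,\delta}(w)\psi_{2,\delta}(w_*)\int_{\Sph}K(x,g_0,\omega)\chi_{g_0,\varphi}(\lambda_0\omega)\,d\omega\,dw_*\,dw,
\]
where we use \eqref{eq:filter-on-sphere}. On the support of the packets we have $|w-w_1|\le C\delta$ and $|w_*-w_2|\le C\delta$, with $C$ depending only on $\supp\psi$. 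Hence $g:=w-w_*$ satisfies $|g-g_0|\le C\delta$ and lies in $\mathscr G^\sharp$. Moreover, because $m_1w_1+m_2w_2=0$, the center of mass satisfies $V:=(m_1w+m_2w_*)/M=O(\delta)$. The difference of integrands splits into two terms:
\[
\big[K(x,g,\omega)-K(x,g_0,\omega)\big]\chi_{g_0,\varphi}(\mathcal W_3)
+K(x,g_0,\omega)\big[\chi_{g_0,\varphi}(\mathcal W_3(w,w_*,\omega))-\chi_{g_0,\varphi}(\lambda_0\omega)\big].
\]

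For the first term we use the $C^1$ bound $M_1$ on $\ol\Omega\times\ol{\mathscr G^\sharp}\times\Sph$ from \eqref{eq:admissible-kernel-class}, which gives $|K(x,g,\omega)-K(x,g_0,\omega)|\le M_1|g-g_0|\le CM_1\delta$. We also have $|\chi_{g_0,\varphi}|\le\|\varphi\|_{L^\infty}$, since $0\le\vartheta\le1$. Integrating over $\Sph$ and against the probability measure bounds this term by $C\delta\|\varphi\|_{C^0}$.

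For the second term we need a Lipschitz bound for $\chi_{g_0,\varphi}$ and a bound on the displacement of the product velocity. We have
\[
\mathcal W_3(w,w_*,\omega)-\lambda_0\omega=V+\frac{m_4}{M}\big(\rho_+(|g|)-\rho_+(|g_0|)\big)\omega .
\]
Since $\rho_+$ is smooth on $[\inf_{\mathscr G^\sharp}|g|,\sup_{\mathscr G^\sharp}|g|]$, which lies away from $g_{\rm th}$, this displacement is $O(\delta)$. The gradient of $\chi_{g_0,\varphi}$ is controlled by $\|\vartheta'\|_\infty\|\varphi\|_\infty+\|\varphi\|_{C^1(\Sph)}\sup|\nabla(z/|z|)|$ on the annulus $\{||z|-\lambda_0|<2d_0\}$. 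On that annulus $|z|\ge\lambda_0/2\ge 2d_0$, so $|\nabla(z/|z|)|\le C/d_0$. Thus $\chi_{g_0,\varphi}$ is Lipschitz with constant $C\|\varphi\|_{C^1(\Sph)}$.

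Both points of the segment $[\lambda_0\omega,\mathcal W_3]$ may leave the annulus, but this causes no difficulty: the gradient vanishes outside the annulus, and the segment stays away from the origin once $C\delta<\lambda_0/2$. This condition fixes $\delta_1$. The remaining factor is integrated using $\int_{\Sph}K(x,g_0,\omega)\,d\omega\le\Lambda_+\le M_0$, or simply the bound $M_1$, which gives $C\delta\|\varphi\|_{C^1}$.

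The constants must be uniform in $g_0\in\mathscr G_0$. The radii of $W_i(g_0)$, the choice of $d_0$ with $4d_0<\inf_{\mathscr G_0}\lambda_3$, the fixed compact subset of $\mathscr G^\sharp$ containing all $\ol{W_1-W_2}$, and the lower bound on $\lambda_0$ are all chosen independently of $g_0$, as arranged in Subsections~\ref{subsec:incident-pulses} and \ref{subsec:velocity-detection}. The constants are also independent of $x$, because every kernel bound used is a supremum over $\ol\Omega$.

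The only delicate point is the uniform Lipschitz bound on $\chi_{g_0,\varphi}$ near the singularity of $z\mapsto z/|z|$. This is handled by the radial cutoff $\vartheta$ together with $\delta_1$ small. The limit \eqref{eq:velocity-localization} then follows directly from \eqref{eq:velocity-localization-error}.
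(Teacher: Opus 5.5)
Your proof is correct and follows the paper's argument essentially step for step. You use the same splitting into a kernel-difference term, controlled by the $C^1$ bound $M_1$ in $g$, and a filter-difference term, controlled by $|\mathcal W_3-\lambda_0\omega|\le C\delta$ and $\|\chi_{g_0,\varphi}\|_{C^1}\le C\|\varphi\|_{C^1(\Sph)}$, both integrated against the probability measure $\psi_{1,\delta}\psi_{2,\delta}\,dw\,dw_*$. The only difference is how $\delta_1$ is fixed: the paper takes $2C_\psi\delta_1<\dist(\mathscr G_0,\R^3\setminus\mathscr G^\sharp)$ so that the whole segment from $g_0$ to $w-w_*$ lies in $\mathscr G^\sharp$, which your bound $|K(x,g,\omega)-K(x,g_0,\omega)|\le M_1|g-g_0|$ implicitly needs (it also follows from $W_1-W_2$ being a ball about $g_0$ inside $\mathscr G^\sharp$), whereas your condition keeping the segment away from the origin is harmless but unnecessary, since $\chi_{g_0,\varphi}\in C_c^\infty(\R^3)$.
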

	
	\begin{proof}
		Fix a compact set $\mathscr G_0\Subset\mathscr G$. All constants below may depend on $\mathscr G_0$, $\psi$, $\vartheta$, the masses, and $M_1$, but not on $x$, $g_0$, $\delta$, or $\varphi$. Since $\psi$ has compact support, there exists $C_\psi>0$ such that $|u|\le C_\psi$ for every $u\in\supp\psi$. Set $d_{\mathscr G}=\dist(\mathscr G_0,\R^3\setminus\mathscr G^\sharp)>0$, and choose $0<\delta_1\le\delta_0$ such that $2C_\psi\delta_1<d_{\mathscr G}$.
		
		If $\psi_{1,\delta}(w)\psi_{2,\delta}(w_*)\ne0$, then $w=w_1+\delta u$ and $w_*=w_2+\delta u_*$ for some $u,u_*\in\supp\psi$. Set $\widetilde g=w-w_*$. By \eqref{eq:offsets},
		\[
		|\widetilde g-g_0|\le2C_\psi\delta,
		\quad
		\bigg|\frac{m_1w+m_2w_*}{M}\bigg|\le C_\psi\delta.
		\]
		For $0<\delta\le\delta_1$, the segment joining $g_0$ and $\widetilde g$ is contained in $\mathscr G^\sharp$. Since $\lambda_3$ is $C^1$ on $\ol{\mathscr G^\sharp}$ and $\lambda_0=\lambda_3(g_0)$,
		\[
		|\lambda_3(\widetilde g)-\lambda_0|\le C\delta,
		\quad
		|\mathcal W_3(w,w_*,\omega)-\lambda_0\omega|\le C\delta.
		\]
		
		Let $\lambda_*=\inf_{g\in\mathscr G_0}\lambda_3(g)>0$. The choice of $d_0$ before \eqref{eq:tubular-filter} gives $4d_0<\lambda_*$, so $|z|\ge\lambda_*-2d_0>\lambda_*/2$ on the support of $\chi_{g_0,\varphi}$. Since $z\mapsto z/|z|$ has derivative $(I-\widehat z\otimes\widehat z)/|z|$ away from the origin, differentiation of \eqref{eq:tubular-filter} gives
		\begin{equation}\label{eq:velocity-filter-C1-bound}
			\|\chi_{g_0,\varphi}\|_{C^1(\R^3)}\le C\|\varphi\|_{C^1(\Sph)}
		\end{equation}
		uniformly for $g_0\in\mathscr G_0$.
		
		By \eqref{eq:admissible-kernel-class}, the derivatives of $K$ with respect to $g$ are bounded on $\ol\Omega\times\ol{\mathscr G^\sharp}\times\Sph$. Since the segment from $g_0$ to $\widetilde g$ lies in $\mathscr G^\sharp$, we have $|K(x,\widetilde g,\omega)-K(x,g_0,\omega)|\le C\delta$. Combining this bound with \eqref{eq:velocity-filter-C1-bound}, \eqref{eq:filter-on-sphere}, and $|K(x,g_0,\omega)|\le M_1$ gives
		\begin{align*}
			&\big|K(x,\widetilde g,\omega)\chi_{g_0,\varphi}\big(\mathcal W_3(w,w_*,\omega)\big)-K(x,g_0,\omega)\varphi(\omega)\big|\\
			&\le|K(x,\widetilde g,\omega)-K(x,g_0,\omega)|\big|\chi_{g_0,\varphi}\big(\mathcal W_3(w,w_*,\omega)\big)\big| +|K(x,g_0,\omega)|\big|\chi_{g_0,\varphi}\big(\mathcal W_3(w,w_*,\omega)\big)-\chi_{g_0,\varphi}(\lambda_0\omega)\big|\\
			&\le C\delta\|\varphi\|_{C^1(\Sph)}.
		\end{align*}
		Since the packets are nonnegative and satisfy $\int_{\R^3}\psi_{i,\delta}(w)\,dw=1$, integration in $(w,w_*,\omega)$ gives \eqref{eq:velocity-localization-error}. The constant absorbs the surface measure of $\Sph$ and is uniform for $x\in\ol\Omega$ and $g_0\in\mathscr G_0$. Formula \eqref{eq:velocity-localization} follows immediately.
	\end{proof}
	
	The quantity $A_\delta$ in \eqref{eq:A-delta-velocity-localization} tests the product distribution against a function of the recentered velocity $z\in\R^3$, and \eqref{eq:velocity-localization} gives its limit as $\delta\to0$. Under the parametrization $z=\lambda_0\omega$ of $\Sigma_3(g_0)$, the surface measures satisfy $dS_z=\lambda_0^2\,d\omega$. For every $\chi\in C_c^\infty(\R^3)$,
	\begin{equation}\label{eq:reaction-sphere-change-of-measure}
		\int_{\Sph}K(x,g_0,\omega)\chi(\lambda_0\omega)\,d\omega
		=\int_{\Sigma_3(g_0)}\lambda_0^{-2}K\Big(x,g_0,\frac{z}{\lambda_0}\Big)\chi(z)\,dS_z.
	\end{equation}
	The weak gain formula uses the integral on the left, with the angular measure $d\omega$. Taking $\chi=\chi_{g_0,\varphi}$ and using \eqref{eq:filter-on-sphere} gives $\int_{\Sph}K(x,g_0,\omega)\chi_{g_0,\varphi}(\lambda_0\omega)\,d\omega
	=\int_{\Sph}K(x,g_0,\omega)\varphi(\omega)\,d\omega$. No factor involving $\lambda_0$ appears in this angular formulation. The factor $\lambda_0^{-2}$ appears only when the same measure is written as a density with respect to the surface measure $dS_z$ on $\Sigma_3(g_0)$.
	
	For $0<\delta\le\delta_0$, the product velocities generated by the two packet supports lie in a fixed compact subset of $\R^3$. Choose a bounded open set $W_3\Subset\R^3$ containing this set and $\supp\chi_{g_0,\varphi}$ as compact subsets. We use $W_3$ in the outgoing flux functional. After decreasing $\eps_0$ if necessary, Lemma~\ref{lem:characteristic-maps} applies with $U^\sharp$ and $W_3$, and the maps $x_\pm^\eps(y,w)$ and $L_\eps(y,w)$ are defined on $U^\sharp\times W_3$ for $0\le\eps\le\eps_0$.
	
	Choose $0\le\beta\in C_c^\infty((-\tau_*,\tau_*))$ with $\int_{\R}\beta(r)\,dr=1$. For $y\in U$, the detector uses the temporal weight $\beta(s-s_0-L(y,e))$. Its response is supported where $|s-s_0-L(y,e)|<\tau_*$. By Lemma~\ref{lem:focusing}, at $\eps=0$ the total travel time from entry to exit is independent of the position of a single forward reaction along the chord. The temporal test includes contributions from every point of the chord.
	
	On the support of $\beta$, we have $|r|<\tau_*$ and $\alpha(r)=1$. The normalization gives $\int_{\R}\beta(r)\alpha(r)^2\,dr=1$, which is the factor obtained by integrating in exit time in the proof of Theorem~\ref{thm:boundary-to-xray}.
	
	Since $s=Rt$, the detector response is supported where $|t-(s_0+L(y,e))/R|<\tau_*/R$ in physical time. Choose $\eta\in C_c^\infty(U)$ to specify the spatial weight assigned to the chords.
	
	\begin{definition}[Restricted measurement in scaled coordinates]\label{def:measured-functional}
		For the packet incoming data $g^{\eps,\delta}$ in Definition~\ref{def:beam-data}, define the scaled detector profile 
		\begin{equation}\label{eq:scaled-detector-profile}
			\Phi^{e,g_0;\eta,\varphi}(s,y,w):=\eta(y)\beta\big(s-s_0-L(y,e)\big)\chi_{g_0,\varphi}(w),
		\end{equation}
		for $(s,y,w)\in(0,T)\times U\times W_3$.
		
		The corresponding measurement is obtained by testing the scaled outgoing flux against this profile:
		\begin{equation}\label{eq:measured-functional}
			\begin{aligned}
				\mathscr M_K^{\eps,\delta}(e,g_0;\eta,\varphi)&:=\big\langle\mathscr A_{K,\eps}^e[g^{\eps,\delta}],
				\Phi^{e,g_0;\eta,\varphi}\big\rangle\\
				&\ =\int_U\int_0^T\int_{W_3}\eta(y)\beta\big(s-s_0-L(y,e)\big)\chi_{g_0,\varphi}(w)\mathscr J_{3,\eps}^e(s,y,w)\,dw\,ds\,dy.
			\end{aligned}
		\end{equation}
		Here $\mathscr J_{3,\eps}^e$ is the projected outgoing flux generated by the packet incoming data $g^{\eps,\delta}$. Its dependence on $\delta$ is suppressed in the notation.
	\end{definition}
	
	The factors $\eta$, $\beta$, and $\chi_{g_0,\varphi}$ in \eqref{eq:scaled-detector-profile} act on the chord parameter, exit time, and product velocity, respectively.
	
	We next relate the scaled definition \eqref{eq:measured-functional} to the physical detector reading in \eqref{eq:intro-measurement-symbol}. Let $R=\eps^{-1}$. For $(t,x,v)\in(0,T/R)\times\Gamma_+$, write $w=v-Re$ and define
	\begin{equation}\label{eq:physical-restricted-detector}
		\Psi_\eps^{e,g_0;\eta,\varphi}(t,x,v)=
		\begin{cases}
			\Phi^{e,g_0;\eta,\varphi}\big(Rt,\pi_{\eps,w}^e(x),w\big),
			& w\in W_3\ \text{and}\ \pi_{\eps,w}^e(x)\in U,\\
			0,&\text{otherwise}.
		\end{cases}
	\end{equation}
	The supports of $\eta$, $\beta$, and $\chi_{g_0,\varphi}$ are compactly contained in their respective coordinate domains. The zero extension in \eqref{eq:physical-restricted-detector} is a compactly supported $C^1$ response on the outgoing phase boundary. Its temporal support is contained in $(0,T/R)$ by the choice of $s_0$, $T$, and $\tau_*$. Lemma~\ref{lem:characteristic-maps} gives a uniform positive lower bound for $n(x)\cdot a_\eps(w)$ on the support of this response.
	
	The change of variables $v=Re+w$ has unit Jacobian. Applying Proposition~\ref{prop:exact-detector-identity} with $\Phi=\Phi^{e,g_0;\eta,\varphi}$ gives
	\begin{equation}\label{eq:physical-scaled-measurement}
		\mathscr M_K^{\eps,\delta}(e,g_0;\eta,\varphi)
		=\big\langle\mathscr A_{K,\eps}^e[g^{\eps,\delta}],\Phi^{e,g_0;\eta,\varphi}\big\rangle
		=R\big\langle\A_{K,R}(G_{1,R}^{\eps,\delta},G_{2,R}^{\eps,\delta}),\Psi_\eps^{e,g_0;\eta,\varphi}\big\rangle,
	\end{equation}
	where $\A_{K,R}$ gives the outgoing product flux in the original variables, $\mathscr A_{K,\eps}^e$ is its normalized form in scaled boundary coordinates, and $\mathscr M_K^{\eps,\delta}$ is the resulting detector measurement.

	\subsection{Recovery of the spatial X-ray transform}\label{subsec:boundary-xray}
	
	\begin{theorem}[Tomographic limit of the restricted measurements]\label{thm:boundary-to-xray}
		Let $K\in\mathfrak K$ with $\mathfrak K$ defined by \eqref{eq:admissible-kernel-class}. For the beam and detector families above,
		\begin{equation}\label{eq:boundary-to-xray}
			\begin{split}
				\lim_{\delta\to0}\lim_{\eps\to0}
				\mathscr M_{K}^{\eps,\delta}(e,g_0;\eta,\varphi)=\int_{e^\perp}\eta(y)\int_{\Sph}\varphi(\omega)
				\bigg[\int_{\R}K(y+\sigma e,g_0,\omega)\,d\sigma\bigg]
				d\omega\,dy.
			\end{split}
		\end{equation}
		For fixed $e$ and $U$, the convergence is uniform when $g_0$ ranges over a compact subset of $\mathscr G$ and when $\eta$ and $\varphi$ range over bounded smooth families whose supports remain in fixed compact subsets of $U$ and $\mathcal O$.
	\end{theorem}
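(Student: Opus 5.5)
The plan is to apply Theorem~\ref{thm:flux-functional} with the packet controls of Definition~\ref{def:beam-data} and the detector profile $\Phi=\Phi^{e,g_0;\eta,\varphi}$. This reduces the inner limit $\eps\to0$ to an explicit expression at $\eps=0$; the packet limit $\delta\to0$ is then handled by Lemma~\ref{lem:velocity-localization}.

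\textbf{Step 1: the limit $\eps\to0$.} The profile $\Phi$ belongs to $C_c^1((0,T)\times U\times W_3)$, and
\[
\|\Phi\|_{\mathfrak D(U,W_3)}\le\|\eta\|_{L^1}\|\beta\|_{L^1}\|\chi_{g_0,\varphi}\|_{L^\infty}.
\]
The constants in \eqref{eq:weak-flux-asymptotic} are uniform in $\delta$. Hence
\[
\lim_{\eps\to0}\mathscr M_K^{\eps,\delta}
=\int_U\int_0^T\int_{W_3}\big(\mathcal T_{+,0}S^0_\delta\big)\,\Phi\,dw\,ds\,dy,
\qquad S^0_\delta=Q_{3,+}^K(U^0_{1,\delta},U^0_{2,\delta}).
\]

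\textbf{Step 2: evaluating the $\eps=0$ expression.} At $\eps=0$ we have $x_+(y,e)-re=y+(\sigma_+(y,e)-r)e$, so $\mathcal T_{+,0}$ integrates along the chord through $y$. Substitute $\sigma=\sigma_+(y,e)-r$ and $s=s_0+L(y,e)+r'$, where $r'$ is the argument of $\beta$. The range $\ell_0=\min\{s,L\}$ is the full chord, because $s>s_0>L$ on $\supp\beta$.

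Insert the gain in weak form against $\chi_{g_0,\varphi}(w)$, using \eqref{eq:third-gain-weak} with the recentered variables. Exchange the $w$-integral with the characteristic integral by Fubini; this is justified by the $L^1$ bounds of Proposition~\ref{prop:all-reactive-gains}. The result is
\[
\int_U\eta(y)\int_{\R}\beta(r')\int_{\sigma_-}^{\sigma_+}\iiint K(y+\sigma e,w-w_*,\omega)\,U^0_{1,\delta}(s_{\rm col},\cdot,w)\,U^0_{2,\delta}(s_{\rm col},\cdot,w_*)\,\chi_{g_0,\varphi}(\mathcal W_3)\,d\omega\,dw_*\,dw\,d\sigma\,dr'\,dy.
\]
Here $s_{\rm col}$ is the collision time. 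By Lemma~\ref{lem:focusing}, the two free fields equal $\zeta(y)\alpha(r')\psi_{i,\delta}$, with $\zeta(y)=1$ on $U$ and $\alpha(r')=1$ on $\supp\beta$.

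The $r'$-integral therefore contributes $\int\beta=1$, and the expression becomes
\[
\int_U\eta(y)\int_{\sigma_-}^{\sigma_+}A_\delta(y+\sigma e)\,d\sigma\,dy,
\]
with $A_\delta$ as in \eqref{eq:A-delta-velocity-localization}.

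\textbf{Step 3: the limit $\delta\to0$.} By \eqref{eq:velocity-localization-error}, $|A_\delta-A_0|\le C\delta\|\varphi\|_{C^1}$ uniformly in $x$. Integrating over the bounded chords against $\eta$ gives a total error of order $C\delta$. Extending $K$ by zero outside $\Omega$ converts $\int_{\sigma_-}^{\sigma_+}$ into $\int_\R$, and $\eta$ vanishes off $U$, so we obtain the right-hand side of \eqref{eq:boundary-to-xray}.

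\textbf{Uniformity.} Uniformity in $g_0\in\mathscr G_0$, and in $\eta,\varphi$ ranging over bounded families, follows from three facts: the constants in Theorem~\ref{thm:flux-functional} and Lemma~\ref{lem:velocity-localization} depend only on fixed geometric data and $C^1$ bounds; the auxiliary choices $W_i$, $d_0$, $\vartheta$ were made uniformly over $g_0\in\mathscr G_0$; and the error terms are linear in $\|\eta\|$ and $\|\varphi\|_{C^1}$.

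\textbf{Main obstacle.} The delicate step is Step 2. One must justify the Fubini exchange and the use of the weak gain formula with the filter inside the characteristic integral. One must also check that the $\eps=0$ formulas for the free fields, \eqref{eq:free-beams}, hold at every collision point on the chord. This requires the pairing with $\chi_{g_0,\varphi}$ to commute with $\mathcal T_{+,0}$; the exchange is legitimate because $\mathcal T_{+,0}$ acts at fixed $w$, with $(s,y)$ dependence only.
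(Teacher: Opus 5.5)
Your proposal follows the paper's proof essentially step for step: Theorem~\ref{thm:flux-functional} for the limit $\eps\to0$, reparametrization along the chord with the synchronized free fields, the weak gain formula tested against $\chi_{g_0,\varphi}$, and Lemma~\ref{lem:velocity-localization} for the limit $\delta\to0$. Two small fixes are needed. First, the Duhamel range is the full chord because $s-L(y,e)=s_0+r'>s_0-\tau_*>0$, not because $s_0>L$, which the construction does not arrange. Second, the beam controls are supported in $U^\sharp$ rather than $U$, so Theorem~\ref{thm:flux-functional} must be applied with $U^\sharp$ in place of $U$ and with $\Phi$ extended by zero.
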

	
	For fixed detector tests, \eqref{eq:finite-resolution-error} also gives the joint limit as $(\eps,\delta)\to(0,0)$, without a relation between the two scales.
	
	\begin{proof}[Proof of Theorem \ref{thm:boundary-to-xray}]
		Let $\Phi=\Phi^{e,g_0;\eta,\varphi}$ be the scaled detector profile defined in \eqref{eq:scaled-detector-profile}. Since $\supp\eta\Subset U\Subset U^\sharp$, we extend $\eta$, and hence $\Phi$, by zero in the $y$-variable from $U$ to $U^\sharp$. The extension belongs to $C_c^1((0,T)\times U^\sharp\times W_3)$ and satisfies $\|\Phi\|_{\mathfrak D(U^\sharp,W_3)}=\|\Phi\|_{\mathfrak D(U,W_3)}$. Fix $\delta>0$. The packet controls satisfy the hypotheses of Theorem~\ref{thm:flux-functional}, and their free transport solutions are denoted by $U_{1,\delta}^\eps$ and $U_{2,\delta}^\eps$. Set $S_\delta^0:=Q_{3,+}^K(U_{1,\delta}^0,U_{2,\delta}^0)$. By Definition~\ref{def:measured-functional}, $\mathscr M_K^{\eps,\delta}(e,g_0;\eta,\varphi)=\big\langle\mathscr A_{K,\eps}^e[g^{\eps,\delta}],\Phi^{e,g_0;\eta,\varphi}\big\rangle$. We apply Theorem~\ref{thm:flux-functional} with the spatial coordinate set $U^\sharp$. Since $\Phi$ vanishes on $U^\sharp\setminus U$, the theorem gives
		\begin{equation}\label{eq:tomographic-first-limit-error}
			\begin{aligned}
				\bigg|\mathscr M_K^{\eps,\delta}(e,g_0;\eta,\varphi)-\int_U\int_0^T\int_{W_3}(\mathcal T_{+,0}S_\delta^0)(s,y,w_3)\Phi^{e,g_0;\eta,\varphi}(s,y,w_3)\,dw_3\,ds\,dy\bigg| \le C\eps \|\Phi^{e,g_0;\eta,\varphi}\|_{\mathfrak D(U,W_3)}.
			\end{aligned}
		\end{equation}
		The detector profile does not depend on $\eps$, and its norm is finite. Indeed,  $\|\Phi^{e,g_0;\eta,\varphi}\|_{\mathfrak D(U,W_3)}\le\|\eta\|_{L^1(U)}\|\beta\|_{L^1(\R)}
		\|\chi_{g_0,\varphi}\|_{L^\infty(W_3)}$. The integral in \eqref{eq:tomographic-first-limit-error} is also independent of $\eps$. Letting $\eps\to0$ in the estimate \eqref{eq:tomographic-first-limit-error} gives
		\begin{align}\label{eq:tomographic-proof-first-limit}
			\lim_{\eps\to0}\mathscr M_K^{\eps,\delta}(e,g_0;\eta,\varphi)=\int_U\int_0^T\int_{W_3}
			(\mathcal T_{+,0}S_\delta^0)(s,y,w_3)\Phi^{e,g_0;\eta,\varphi}(s,y,w_3)\,dw_3\,ds\,dy.
		\end{align}
		
		For $s=s_0+L(y,e)+r$ with $\beta(r)\ne0$, one has $|r|<\tau_*$. Since $s_0>\tau_*$, $s-L(y,e)=s_0+r>0$, so the upper limit in the definition of $\mathcal T_{+,0}$ is $L(y,e)$. At $\eps=0$, one has $a_0(w_3)=e$, $x_+^0(y,w_3)=x_+(y,e)$, and $L_0(y,w_3)=L(y,e)$, where $L_\eps(y,w)$ is given by \eqref{eq:def_L_eps}. Formula \eqref{eq:outgoing-characteristic-trace} gives
		\[
		(\mathcal T_{+,0}S_\delta^0)(s,y,w_3)
		=\int_0^{L(y,e)}S_\delta^0\big(s-\ell,x_+(y,e)-\ell e,w_3\big)\,d\ell.
		\]
		Set $\sigma=\sigma_+(y,e)-\ell$. Since $x_+(y,e)=y+\sigma_+(y,e)e$, one has $x_+(y,e)-\ell e=y+\sigma e$ and $s-\ell=s-\sigma_+(y,e)+\sigma$. Reversing the integration limits gives
		\begin{align}\label{eq:full-chord-Duhamel}
			(\mathcal T_{+,0}S_\delta^0)(s,y,w_3)
			=\int_{\sigma_-(y,e)}^{\sigma_+(y,e)}
			S_\delta^0\big(s-\sigma_+(y,e)+\sigma,y+\sigma e,w_3\big)\,d\sigma.
		\end{align}
		
		Set $s_\sigma=s-\sigma_+(y,e)+\sigma$. Then $s_\sigma-s_0-(\sigma-\sigma_-(y,e))=s-s_0-L(y,e)=r$. Formula \eqref{eq:free-beams} gives $U_{i,\delta}^0(s_\sigma,y+\sigma e,z)=\zeta(y)\alpha(r)\psi_{i,\delta}(z)$ for $i=1,2$.
		
		All integrals below are absolutely convergent: the packet relative velocities lie in $\mathscr G^\sharp$, the kernel is bounded on the resulting compact collision set, and the other profiles have compact support. We may apply Fubini's theorem. Substituting \eqref{eq:full-chord-Duhamel} into \eqref{eq:tomographic-proof-first-limit}, we extend the $w_3$ integral from $W_3$ to $\R^3$ because $\supp\chi_{g_0,\varphi}\Subset W_3$. The weak gain formula \eqref{eq:third-gain-weak}, with test function $\chi_{g_0,\varphi}$, then expresses this integral in terms of $w$, $w_*$, and $\omega$, with recentered product velocity $\mathcal W_3(w,w_*,\omega)$ as in \eqref{eq:W3}.
		
		The product of the two input fields in the weak gain formula is
		$\zeta(y)^2\alpha(r)^2\psi_{1,\delta}(w)\psi_{2,\delta}(w_*)$. On the support of $\eta$, one has $\zeta(y)=1$. On the support of $\beta$, one has $\alpha(r)=1$. The choice
		$s_0+L(y,e)+[-\tau_*,\tau_*]\Subset(0,T)$ ensures that the support of $\beta(r)$ is contained in the transformed $s$-integration interval. With the change of variable $r=s-s_0-L(y,e)$, the exit time integration contributes $\int_{\R}\beta(r)\alpha(r)^2\,dr=1$. Extending $\eta$ by zero from $U$ to $e^\perp$ and using the spatial zero extension of $K$ outside $\Omega$ gives
		\begin{equation}\label{eq:packet-averaged-line-integral}
			\begin{aligned}
				\lim_{\eps\to0}\mathscr M_K^{\eps,\delta}(e,g_0;\eta,\varphi)
				&=\int_{e^\perp}\eta(y)\int_{\R}\int_{\R^3}\int_{\R^3}\int_{\Sph}
				K(y+\sigma e,w-w_*,\omega)\psi_{1,\delta}(w)\psi_{2,\delta}(w_*)\\
				&\quad\times\chi_{g_0,\varphi}\big(\mathcal W_3(w,w_*,\omega)\big)
				\,d\omega\,dw_*\,dw\,d\sigma\,dy.
			\end{aligned}
		\end{equation}
		
		Extend $A_\delta$ and $A_0$ by zero from $\ol\Omega$ to $\R^3$. For each $y\in e^\perp$ and $\sigma\in\R$, the inner $(w,w_*,\omega)$-integral in \eqref{eq:packet-averaged-line-integral} is precisely $A_\delta(y+\sigma e)$, while $A_0(y+\sigma e)$ is defined by \eqref{eq:A-delta-velocity-localization}. By \eqref{eq:velocity-localization-error}, $|A_\delta(x)-A_0(x)|
		\le C\delta\|\varphi\|_{C^1(\Sph)}$ uniformly for $x\in\ol\Omega$, and the same estimate holds trivially
		for $x\notin\ol\Omega$. The absolute difference between \eqref{eq:packet-averaged-line-integral} and $\int_{e^\perp}\eta(y)\int_{\R}A_0(y+\sigma e)\,d\sigma\,dy$ is bounded by $C\delta\|\varphi\|_{C^1(\Sph)}\int_U|\eta(y)|L(y,e)\,dy$, which tends to zero as $\delta\to0$. The constants in the preceding estimates are uniform under the compactness assumptions in the theorem. If $g_0$ ranges over a compact subset of $\mathscr G$, the packet centers remain bounded, the relative velocities occurring in the packet integrals stay in a fixed compact subset of $\mathscr G^\sharp$, and the associated product velocities and filter supports lie in a common bounded set. Fixed supports and bounded smooth norms for $\eta$ and $\varphi$ give the remaining uniform bounds. We conclude that \eqref{eq:packet-averaged-line-integral} converges to $\int_{e^\perp}\eta(y)\int_{\R}\int_{\Sph}K(y+\sigma e,g_0,\omega)\varphi(\omega)\,d\omega\,d\sigma\,dy$, which is the right-hand side of \eqref{eq:boundary-to-xray}.
	\end{proof}
	
	For fixed $e\in\Sph$ and $g_0\in\mathscr G$, define $D(y,\omega)=X_xK(y,e;g_0,\omega)$ on $\Omega_e\times\mathcal O$. Theorem~\ref{thm:boundary-to-xray} determines the pairing of $D$ with every test function of the form $\eta(y)\varphi(\omega)$ supported in $\Omega_e\times\mathcal O$. By concentrating both test functions, we recover the value of $D$ at each point.
	
	\begin{corollary}[Pointwise recovery of the spatial X-ray transform]\label{cor:pointwise-line-data}
		Fix $e\in\Sph$, $g_0\in\mathscr G$, $y_0\in\Omega_e$, and $\omega_0\in\mathcal O$. Choose open neighborhoods $U_0\Subset\Omega_e$ of $y_0$ and $\mathcal O_0\Subset\mathcal O$ of $\omega_0$. For all sufficiently small $\rho,\tau>0$, let $\eta_\rho\in C_c^\infty(U_0)$ and $\varphi_\tau\in C_c^\infty(\mathcal O_0)$ be nonnegative and satisfy $\int_{e^\perp}\eta_\rho(y)\,dy=\int_{\Sph}\varphi_\tau(\omega)\,d\omega=1$, $\supp\eta_\rho\subset B_{e^\perp}(y_0,\rho)$, and $\supp\varphi_\tau\subset B_{\Sph}(\omega_0,\tau)$, where $B_{\Sph}(\omega_0,\tau)$ is the geodesic ball on $\Sph$. Then
		\begin{equation}\label{eq:pointwise-line-data}
			\lim_{(\rho,\tau)\to(0,0)}\big[\lim_{\delta\to0}\lim_{\eps\to0}\mathscr M_K^{\eps,\delta}(e,g_0;\eta_\rho,\varphi_\tau)\big]
			=X_xK(y_0,e;g_0,\omega_0).
		\end{equation}
	\end{corollary}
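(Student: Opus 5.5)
The plan is to apply Theorem~\ref{thm:boundary-to-xray} with $U=U_0$, $\eta=\eta_\rho$ and $\varphi=\varphi_\tau$. For each fixed pair $(\rho,\tau)$ this gives
\[
\lim_{\delta\to0}\lim_{\eps\to0}\mathscr M_K^{\eps,\delta}(e,g_0;\eta_\rho,\varphi_\tau)=\int_{e^\perp}\int_{\Sph}\eta_\rho(y)\varphi_\tau(\omega)D(y,\omega)\,d\omega\,dy,
\]
where $D(y,\omega)=X_xK(y,e;g_0,\omega)=\int_{\sigma_-(y,e)}^{\sigma_+(y,e)}K(y+\sigma e,g_0,\omega)\,d\sigma$. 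Uniformity is not needed here, since the inner limits are taken for each fixed pair before $(\rho,\tau)\to(0,0)$. The claim therefore reduces to a standard approximate-identity argument. It suffices to show that $D$ is continuous at $(y_0,\omega_0)$ as a function on $\Omega_e\times\Sph$. Once this is known, the bound
\[
\Big|\iint\eta_\rho\varphi_\tau D-D(y_0,\omega_0)\Big|\le\sup_{|y-y_0|<\rho,\ d(\omega,\omega_0)<\tau}|D(y,\omega)-D(y_0,\omega_0)|
\]
uses only the nonnegativity and unit mass of $\eta_\rho$ and $\varphi_\tau$. The right-hand side tends to zero as $(\rho,\tau)\to(0,0)$.

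To prove the continuity of $D$ at $(y_0,\omega_0)$, I would use the definition of $\mathfrak K$. Since $g_0\in\mathscr G\subset\mathscr G^\sharp$, the function $K(\cdot,g_0,\cdot)$ is continuous on $\ol\Omega\times\Sph$ and bounded by $M_1$. Fix $V$ with $\ol{U_0}\subset V\Subset\Omega_e$. By Lemma~\ref{lem:chord-geometry}, the endpoints $\sigma_\pm(\cdot,e)$ are $C^3$, and in particular continuous, on $V$. I would substitute $\sigma=\sigma_-(y,e)+tL(y,e)$ to write
\[
D(y,\omega)=L(y,e)\int_0^1K\big(y+(\sigma_-(y,e)+tL(y,e))e,g_0,\omega\big)\,dt .
\]
The integrand is jointly continuous in $(t,y,\omega)\in[0,1]\times\ol{U_0}\times\Sph$, since the point $y+(\sigma_-+tL)e$ lies on the closed chord and hence in $\ol\Omega$, where $K$ is continuous. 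The integrand is also bounded by $M_1$. Dominated convergence, or uniform continuity on this compact set, then gives continuity of $D$ on $U_0\times\Sph$.

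The only mildly delicate point is that the substitution requires continuity of $K$ up to $\p\Omega$, because the chord endpoints lie on the boundary. This is supplied by the $C^1(\ol\Omega\times\ol{\mathscr G^\sharp}\times\Sph)$ assumption. I would also note that the zero extension of $K$ outside $\ol\Omega$ plays no role, since the substitution confines the integration to the chord itself.
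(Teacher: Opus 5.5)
Your proposal is correct and follows the paper's proof essentially step for step. You apply Theorem~\ref{thm:boundary-to-xray} for each fixed $(\rho,\tau)$, prove continuity of $D$ by rescaling the chord to $[0,1]$ via $\sigma=\sigma_-(y,e)+tL(y,e)$, and conclude with the approximate-identity bound. Your choice of an intermediate set $V$ with $\ol{U_0}\subset V\Subset\Omega_e$, which gives continuity of $\sigma_\pm(\cdot,e)$ up to $\ol{U_0}$, is a small piece of care that the paper leaves implicit.
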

	
	\begin{proof}
		Set $D(y,\omega)=X_xK(y,e;g_0,\omega)$. For $y\in U_0$, the change of variables $\sigma=\sigma_-(y,e)+rL(y,e)$ gives
		\begin{equation}\label{eq:line-data-fixed-interval}
			D(y,\omega)=L(y,e)\int_0^1K\big(y+[\sigma_-(y,e)+rL(y,e)]e,g_0,\omega\big)\,dr.
		\end{equation}
		By Lemma~\ref{lem:chord-geometry}, the functions $\sigma_-(\cdot,e)$ and $L(\cdot,e)$ are continuous on $U_0$. The integrand in \eqref{eq:line-data-fixed-interval} is jointly continuous on $\ol U_0\times[0,1]\times\ol{\mathcal O_0}$. Hence, $D$ is continuous on $U_0\times\mathcal O_0$.
		
		For fixed $\rho$ and $\tau$, Theorem~\ref{thm:boundary-to-xray} gives
		\begin{equation}\label{eq:averaged-pointwise-line-data}
			\lim_{\delta\to0}\lim_{\eps\to0}\mathscr M_K^{\eps,\delta}(e,g_0;\eta_\rho,\varphi_\tau)
			=\int_{e^\perp}\eta_\rho(y)\int_{\Sph}\varphi_\tau(\omega)D(y,\omega)\,d\omega\,dy.
		\end{equation}
		The nonnegativity and the two normalization identities imply
		\begin{align}
			\bigg|\int_{e^\perp}\eta_\rho(y)\int_{\Sph}\varphi_\tau(\omega)D(y,\omega)\,d\omega\,dy-D(y_0,\omega_0)\bigg|\le\sup_{\substack{y\in\supp\eta_\rho\\ \omega\in\supp\varphi_\tau}}|D(y,\omega)-D(y_0,\omega_0)|.
			\label{eq:approximate-identity-line-data-error}
		\end{align}
		The support assumptions imply that the last supremum tends to zero as $(\rho,\tau)\to(0,0)$. Combining this convergence with \eqref{eq:averaged-pointwise-line-data} proves \eqref{eq:pointwise-line-data}.
	\end{proof}

	\section{Recovery of the collision kernel}\label{sec:recovery}
	
	\subsection{Uniqueness and reconstruction}\label{subsec:uniqueness-reconstruction}
	
	\para{Fourier reconstruction} For fixed $(g,\omega)\in\mathscr G\times\mathcal O$, let $\widetilde K_{g,\omega}$ denote the zero extension of $K(\cdot,g,\omega)$ to $\R^3$. 
	
	\begin{lemma}[Fourier slice identity]\label{lem:fourier-slice}
		Let $f\in L^1(\R^3)$. Define $Xf(y,e)=\int_\R f(y+\sigma e)\,d\sigma$ and $\widehat f(\xi)=\int_{\R^3}e^{-\mathsf i x\cdot\xi}f(x)\,dx$. If $\xi\in e^\perp$, then
		\begin{equation}\label{eq:fourier-slice}
			\int_{e^\perp}e^{-\mathsf i y\cdot\xi}Xf(y,e)\,dy=\widehat f(\xi).
		\end{equation}
	\end{lemma}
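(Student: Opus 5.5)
The proof is a direct computation with Fubini's theorem, using the orthogonal decomposition $\R^3=e^\perp\oplus\R e$. Write every $x\in\R^3$ uniquely as $x=y+\sigma e$ with $y=P_{e^\perp}x\in e^\perp$ and $\sigma=x\cdot e\in\R$. The map $(y,\sigma)\mapsto y+\sigma e$ is an isometry from $e^\perp\times\R$ (with two-dimensional Lebesgue measure on $e^\perp$) onto $\R^3$, so $dx=dy\,d\sigma$. Since $f\in L^1(\R^3)$, Tonelli's theorem shows that $\int_{e^\perp}\int_\R|f(y+\sigma e)|\,d\sigma\,dy=\|f\|_{L^1}<\infty$. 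Hence $Xf(y,e)$ is finite for almost every $y\in e^\perp$, and $Xf(\cdot,e)\in L^1(e^\perp)$. In particular, the left-hand side of \eqref{eq:fourier-slice} is an absolutely convergent integral.

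Now use the hypothesis $\xi\in e^\perp$. For $x=y+\sigma e$ we have $x\cdot\xi=y\cdot\xi+\sigma\, e\cdot\xi=y\cdot\xi$, so the phase $e^{-\mathsf i x\cdot\xi}$ does not depend on $\sigma$. Write $\widehat f(\xi)=\int_{\R^3}e^{-\mathsf i x\cdot\xi}f(x)\,dx$ in the coordinates $(y,\sigma)$. The integrand has modulus $|f|$, which is integrable, so Fubini's theorem lets us integrate first in $\sigma$ and then in $y$. This gives
\[
\widehat f(\xi)=\int_{e^\perp}e^{-\mathsf i y\cdot\xi}\int_\R f(y+\sigma e)\,d\sigma\,dy=\int_{e^\perp}e^{-\mathsf i y\cdot\xi}Xf(y,e)\,dy,
\]
which is \eqref{eq:fourier-slice}.

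The only point that needs care is measure-theoretic. We must check that $dy$ on $e^\perp$, taken in an orthonormal basis $b_1,b_2$, combines with $d\sigma$ to give Lebesgue measure on $\R^3$. This holds because $(y_1,y_2,\sigma)\mapsto y_1b_1+y_2b_2+\sigma e$ is an orthogonal linear map and so has Jacobian one. The exceptional null set of $y$ where $Xf(y,e)$ diverges does not affect the integral. There is no real obstacle beyond this.

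For the later use in Theorem~\ref{thm:main}, we apply the lemma to $f=\widetilde K_{g,\omega}$, which is bounded with compact support and therefore in $L^1$. This yields \eqref{eq:intro-fourier-reconstruction}. The case $\xi=0$ is included, since $0\in e^\perp$ for every $e$. Because the right-hand side $\widehat f(\xi)$ does not involve $e$, the result is independent of the choice of $e_\xi$.
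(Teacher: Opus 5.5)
Your proof is correct and follows the same route as the paper: decompose $x=y+\sigma e$, note that $x\cdot\xi=y\cdot\xi$ for $\xi\in e^\perp$, and apply Fubini's theorem. Your extra measure-theoretic checks are the unit Jacobian of the orthogonal parametrization and the Tonelli bound $\|Xf(\cdot,e)\|_{L^1(e^\perp)}\le\|f\|_{L^1}$; they only make explicit what the paper's two-line proof leaves implicit.
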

	
	\begin{proof}
		Write $x=y+\sigma e$. Since $\xi\cdot e=0$, one has $x\cdot\xi=y\cdot\xi$. The identity \eqref{eq:fourier-slice} follows from Fubini's theorem.
	\end{proof}
	
	\begin{corollary}[Reconstruction formula]\label{cor:reconstruction}
		The restricted boundary measurements determine $\widehat{\widetilde K}_{g,\omega}(\xi)$ for every $\xi\in\R^3$. For $\xi\ne0$, choose any $e_\xi\in\Sph$ satisfying $e_\xi\cdot\xi=0$. Then
		\begin{equation}\label{eq:fourier-reconstruction}
			\widehat{\widetilde K}_{g,\omega}(\xi)=\int_{e_\xi^\perp}e^{-\mathsf i y\cdot\xi}X_xK(y,e_\xi;g,\omega)\,dy.
		\end{equation}
		At $\xi=0$, one has $\widehat{\widetilde K}_{g,\omega}(0)=\int_{e^\perp}X_xK(y,e;g,\omega)\,dy$ for any $e\in\Sph$. Moreover,
		\begin{equation}\label{eq:inverse-fourier}
			K(x,g,\omega)=\lim_{\kappa\downarrow0}(2\pi)^{-3}\int_{\R^3}e^{\mathsf i x\cdot\xi}e^{-\kappa|\xi|^2}\widehat{\widetilde K}_{g,\omega}(\xi)\,d\xi,
			\quad x\in\Omega.
		\end{equation}
		The convergence is locally uniform in $x\in\Omega$.
	\end{corollary}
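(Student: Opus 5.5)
The proof has three steps: determine the line data from the measurements, apply the Fourier slice identity, and then invert with a Gaussian regularization. Throughout, fix $(g,\omega)\in\mathscr G\times\mathcal O$. Since $K\in\mathfrak K$, the function $K(\cdot,g,\omega)$ is bounded and continuous on $\ol\Omega$, and $\Omega$ is bounded. Hence $\widetilde K_{g,\omega}\in L^1(\R^3)\cap L^\infty(\R^3)$ has compact support and is continuous on $\Omega$.

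\emph{Step one: the measurements determine the line data.} Fix $e\in\Sph$. Corollary~\ref{cor:pointwise-line-data} recovers $X_xK(y,e;g,\omega)$ from the limits of $\mathscr M_K^{\eps,\delta}$ for every $y\in\Omega_e$. For $y\in e^\perp\setminus\Omega_e$, the line $y+\R e$ misses $\Omega$. By the zero-extension convention, $X_xK(y,e;g,\omega)=0$ there. This includes the boundary of $\Omega_e$, which has Lebesgue measure zero in $e^\perp$ in any case. Therefore $y\mapsto X_xK(y,e;g,\omega)$ is known on all of $e^\perp$. It is also integrable, since $\int_{e^\perp}|X_xK|\,dy\le\|\widetilde K_{g,\omega}\|_{L^1}$ by Tonelli's theorem.

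\emph{Step two: Fourier slice.} Given $\xi\ne0$, choose $e_\xi\perp\xi$. Lemma~\ref{lem:fourier-slice} with $f=\widetilde K_{g,\omega}$ and $e=e_\xi$ gives \eqref{eq:fourier-reconstruction}. The left side of that identity does not involve $e_\xi$, so the right side is independent of the choice of $e_\xi$. At $\xi=0$, every $e\in\Sph$ satisfies $e\perp\xi$, so the same lemma gives the stated formula for $\widehat{\widetilde K}_{g,\omega}(0)$. Consequently $\widehat{\widetilde K}_{g,\omega}$ is determined everywhere by data known from Step one.

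\emph{Step three: Gaussian-regularized inversion.} Since $\widetilde K_{g,\omega}\in L^1$, its transform $\widehat{\widetilde K}_{g,\omega}$ is bounded. The integral in \eqref{eq:inverse-fourier} therefore converges absolutely for each $\kappa>0$. By Fubini's theorem it equals the heat convolution $(G_\kappa*\widetilde K_{g,\omega})(x)$, where
\[
G_\kappa(z)=(4\pi\kappa)^{-3/2}e^{-|z|^2/(4\kappa)},
\]
using the Gaussian Fourier transform. The main point to check is local uniformity of the convergence as $\kappa\downarrow0$. Fix a compact set $\mathcal K\subset\Omega$ and let $d=\dist(\mathcal K,\p\Omega)>0$. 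Write $G_\kappa*\widetilde K-\widetilde K(x)$ as an integral of $G_\kappa(z)\big[\widetilde K(x-z)-\widetilde K(x)\big]$ and split it.
\begin{itemize}
\item On $|z|<r$ with $r<d/2$, the bracket is controlled by the modulus of continuity of $K(\cdot,g,\omega)$ on the compact set $\{x:\dist(x,\mathcal K)\le d/2\}\subset\Omega$. This is uniform in $x\in\mathcal K$, and can even be taken Lipschitz from the $C^1$ bound $M_1$.
\item On $|z|\ge r$, bound the bracket by $2\|\widetilde K\|_{L^\infty}\le2M_1$ and use $\int_{|z|\ge r}G_\kappa\to0$.
\end{itemize}
This gives \eqref{eq:inverse-fourier}, uniformly on compact subsets of $\Omega$, and in particular pointwise.

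\emph{Determination by the data.} Every quantity in Steps one to three is built from the limits of the restricted measurements. The uniqueness statement of the main theorem follows by applying the same chain to two kernels with equal measurements.
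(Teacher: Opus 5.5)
Your proposal is correct and follows the paper's proof essentially step for step. It recovers the line data on $\Omega_e$ via Corollary~\ref{cor:pointwise-line-data}, sets them to zero off $\Omega_e$, applies Lemma~\ref{lem:fourier-slice}, and identifies the regularized inverse with $G_\kappa*\widetilde K_{g,\omega}$, using the near/far splitting for local uniform convergence. The only minor difference is on $\p\Omega_e$: the paper uses strict convexity to show that the line meets $\ol\Omega$ in at most one point, whereas you note that this null set in $e^\perp$ cannot affect the Fourier integral, which suffices here.
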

	
	\begin{proof}
		Fix $(g,\omega)\in\mathscr G\times\mathcal O$ and set $f=\widetilde K_{g,\omega}$. Since $\Omega$ is bounded and $K(\cdot,g,\omega)$ is bounded, $f\in L^1(\R^3)$. For every $e\in\Sph$, Theorem~\ref{thm:boundary-to-xray} and Corollary~\ref{cor:pointwise-line-data} determine $Xf(y,e)=X_xK(y,e;g,\omega)$ for every $y\in\Omega_e$.
		
		If $y\notin\ol{\Omega_e}$, then $y+\R e$ does not intersect $\ol\Omega$, so $Xf(y,e)=0$. If $y\in\p\Omega_e$, then $(y+\R e)\cap\ol\Omega$ contains at most one point. Indeed, strict convexity would place the open segment between two distinct points of this intersection in $\Omega$. Its projection would then belong to $\Omega_e$, contradicting $y\in\p\Omega_e$. The intersection has one-dimensional measure zero, so $Xf(y,e)=0$ also in this case. The measurements determine $Xf(\cdot,e)$ on the whole plane $e^\perp$.
		
		Let $\xi\ne0$ and choose $e_\xi\cdot\xi=0$. Since $\xi\in e_\xi^\perp$, Lemma~\ref{lem:fourier-slice} gives \eqref{eq:fourier-reconstruction}. Every admissible choice of $e_\xi$ gives the same value $\widehat f(\xi)$, proving independence of this choice. At $\xi=0$, Fubini's theorem gives $\int_{e^\perp}Xf(y,e)\,dy=\int_{\R^3}f(x)\,dx=\widehat f(0)$.
		
		For $\kappa>0$, let $G_\kappa(x)=(4\pi\kappa)^{-3/2}\exp(-|x|^2/(4\kappa))$. With the Fourier convention used here, the right-hand side of \eqref{eq:inverse-fourier} equals $(G_\kappa*f)(x)$. Every $x\in\Omega$ is a continuity point of $f$, and therefore $(G_\kappa*f)(x)\to K(x,g,\omega)$ as $\kappa\downarrow0$.
		
		Let $E\Subset\Omega$ and choose $r>0$ such that the closed $r$-neighborhood of $E$ is contained in $\Omega$. The function $K(\cdot,g,\omega)$ is uniformly continuous on this neighborhood. Splitting the convolution into $|z|<r$ and $|z|\ge r$, using uniform continuity in the first part and $\int_{|z|\ge r}G_\kappa(z)\,dz\to0$ in the second, gives $\sup_{x\in E}|(G_\kappa*f)(x)-K(x,g,\omega)|\to0$. This proves the asserted local uniform convergence.
	\end{proof}

	\begin{proof}[Proof of Theorem~\ref{thm:main}]
		Fix $e\in\Sph$ and $g_0\in\mathscr G$, and set
		\[
		D_e(y,\omega)=X_x(K_1-K_2)(y,e;g_0,\omega),
		\quad (y,\omega)\in\Omega_e\times\mathcal O.
		\]
		The continuity argument in the proof of Corollary~\ref{cor:pointwise-line-data} shows that $D_e$ is continuous on every relatively compact subset of $\Omega_e\times\mathcal O$.
		
		Let $U\Subset\Omega_e$. Taking first $\eps\to0$ and then $\delta\to0$ in \eqref{eq:intro-measurement-equality}, and applying Theorem~\ref{thm:boundary-to-xray}, gives
		\begin{equation}\label{eq:test-zero}
			\int_U\eta(y)\int_{\Sph}\varphi(\omega)D_e(y,\omega)\,d\omega\,dy=0
		\end{equation}
		for every $\eta\in C_c^\infty(U)$ and $\varphi\in C_c^\infty(\mathcal O)$.
		
		Fix $\varphi\in C_c^\infty(\mathcal O)$ and define $D_{e,\varphi}(y)=\int_{\Sph}\varphi(\omega)D_e(y,\omega)\,d\omega$. Equation \eqref{eq:test-zero} shows that $D_{e,\varphi}=0$ as a distribution on $U$. Since $D_{e,\varphi}$ is continuous, it vanishes pointwise on $U$. As $U\Subset\Omega_e$ is arbitrary, $\int_{\Sph}\varphi(\omega)D_e(y,\omega)\,d\omega=0$ for every $y\in\Omega_e$ and every $\varphi\in C_c^\infty(\mathcal O)$. For each fixed $y$, the function $D_e(y,\cdot)$ therefore vanishes as a distribution on $\mathcal O$. Its continuity in $\omega$ gives
		\[
		X_x(K_1-K_2)(y,e;g_0,\omega)=0
		\]
		for every $y\in\Omega_e$ and $\omega\in\mathcal O$. Since $e\in\Sph$ was arbitrary, this identity holds for every direction.
		
		Fix $\omega\in\mathcal O$, and define
		\[
		f(x)=
		\begin{cases}
			(K_1-K_2)(x,g_0,\omega),&x\in\Omega,\\
			0,&x\notin\Omega.
		\end{cases}
		\]
		For every $e\in\Sph$, one has $Xf(y,e)=0$ for $y\in\Omega_e$. It also vanishes for $y\notin\ol{\Omega_e}$. If $y\in\p\Omega_e$, strict convexity implies that $(y+\R e)\cap\ol\Omega$ contains at most one point, so $Xf(y,e)=0$. Hence, $Xf(\cdot,e)=0$ on all of $e^\perp$.
		
		Let $\xi\ne0$ and choose $e\in\Sph$ with $e\cdot\xi=0$. Lemma~\ref{lem:fourier-slice} gives
		\begin{equation}\label{eq:global-fourier-slice}
			\widehat f(\xi)=\int_{e^\perp}e^{-\mathsf i y\cdot\xi}Xf(y,e)\,dy=0.
		\end{equation}
		Since $f\in L^1(\R^3)$, its Fourier transform is continuous, and therefore $\widehat f(0)=0$ as well. Uniqueness of the Fourier transform yields $f=0$ almost everywhere. The function $K_1-K_2$ is continuous on $\Omega\times\mathscr G\times\mathcal O$, so the equality holds pointwise. Since $g_0\in\mathscr G$ and $\omega\in\mathcal O$ were arbitrary, this proves \eqref{eq:intro-main-conclusion}.
		
		The determination of $X_xK$, the Fourier formula, and the regularized inverse Fourier reconstruction follow from Corollary~\ref{cor:reconstruction}. This completes the proof.
	\end{proof}
	
	\subsection{Stability and finite detector resolution}\label{subsec:stability-resolution}
	
	We first prove stability identities for the spatial X-ray transforms recovered in the limit. Corollary~\ref{cor:raw-flux-noise} gives a separate estimate for errors in the measured boundary flux. Let $de$ be the surface measure on $\Sph$ and $dy$ the Euclidean measure on $e^\perp$. For a compact collision set $\mathcal C\Subset\mathscr G\times\mathcal O$, define
	\begin{equation}\label{eq:asymptotic-data-norm}
		\begin{split}
			\|D\|_{\mathcal Y_{-1/2}(\mathcal C)}^2
			&:=\int_{\mathcal C}\int_{\Sph}\int_{e^\perp}|D(y,e;g,\omega)|^2\,dy\,de\,dg\,d\omega,\\
			\|D\|_{\mathcal Y_0(\mathcal C)}^2
			&:=\int_{\mathcal C}\int_{\Sph}\int_{e^\perp}\big|(-\Delta_y)^{1/4}D(y,e;g,\omega)\big|^2\,dy\,de\,dg\,d\omega.
		\end{split}
	\end{equation}
	Here $(-\Delta_y)^{1/4}$ is the fractional Laplacian defined by the Fourier multiplier $|\xi|^{1/2}$ on the plane $e^\perp$.
	
	\begin{proposition}[Plancherel identities for the spatial X-ray transform]\label{prop:Xray-Plancherel}
		Let $f\in C_c^\infty(\R^3)$, then
		\begin{equation}\label{eq:Xray-Hminus-identity}
			\int_{\Sph}\int_{e^\perp}|Xf(y,e)|^2\,dy\,de=4\pi^2\|f\|_{\dot H^{-1/2}(\R^3)}^2,
		\end{equation}
		where $\|f\|_{\dot H^{-1/2}}^2=(2\pi)^{-3}\int_{\R^3}|\xi|^{-1}|\widehat f(\xi)|^2\,d\xi$.
		Moreover,
		\begin{equation}\label{eq:Xray-L2-filtered}
			\int_{\Sph}\int_{e^\perp}\big|(-\Delta_y)^{1/4}Xf(y,e)
			\big|^2
			\,dy\,de
			=
			4\pi^2\|f\|_{L^2(\R^3)}^2.
		\end{equation}
	\end{proposition}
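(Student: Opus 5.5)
The plan is to combine the Fourier slice identity, Lemma~\ref{lem:fourier-slice}, with Plancherel on each plane $e^\perp$ and then integrate over $e\in\Sph$ using polar coordinates in $\R^3$.

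First, fix $e\in\Sph$. Since $f\in C_c^\infty(\R^3)$, the function $Xf(\cdot,e)$ is smooth and compactly supported on the two-dimensional plane $e^\perp$. For $\xi\in e^\perp$, its two-dimensional Fourier transform is $\widehat f(\xi)$. Plancherel on $e^\perp\cong\R^2$, with the same convention, then gives
\[
\int_{e^\perp}|Xf(y,e)|^2\,dy=(2\pi)^{-2}\int_{e^\perp}|\widehat f(\xi)|^2\,d\xi .
\]
Likewise, $(-\Delta_y)^{1/4}Xf(\cdot,e)$ has Fourier transform $|\xi|^{1/2}\widehat f(\xi)$ on $e^\perp$. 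This function is in $L^2(e^\perp)$ because $\widehat f$ is Schwartz, so the analogous identity holds with the extra weight $|\xi|$.

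Next, integrate over $e\in\Sph$. The key computation is the identity
\[
\int_{\Sph}\int_{e^\perp}h(\xi)\,d\xi\,de=2\pi\int_{\R^3}\frac{h(\xi)}{|\xi|}\,d\xi ,
\]
valid for nonnegative measurable $h$ and applied to $h=|\widehat f|^2$ and $h=|\xi||\widehat f|^2$. To prove it, I would write $\xi=\rho\theta$ with $\theta$ on the unit circle of $e^\perp$. The left side becomes $\int_{\Sph}\int_{S^1_e}\int_0^\infty h(\rho\theta)\rho\,d\rho\,d\theta\,de$. The pairs $(e,\theta)$ with $e\perp\theta$ form the unit tangent bundle of $\Sph$. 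Rotation invariance lets one exchange the order: for fixed $\theta\in\Sph$, the set of $e\perp\theta$ is a great circle of length $2\pi$. This gives $\int_{\Sph}\int_{S^1_e}F(\theta)\,d\theta\,de=2\pi\int_{\Sph}F(\theta)\,d\theta$. An alternative route is to test with radial Gaussians to confirm the constant. Then
\[
2\pi\int_{\Sph}\int_0^\infty h(\rho\theta)\rho\,d\rho\,d\theta=2\pi\int_{\R^3}h(\xi)|\xi|^{-1}\,d\xi .
\]

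Combining the two steps gives $\int_{\Sph}\int_{e^\perp}|Xf|^2=(2\pi)^{-2}\cdot2\pi\int|\xi|^{-1}|\widehat f|^2$. Since $\|f\|_{\dot H^{-1/2}}^2=(2\pi)^{-3}\int|\xi|^{-1}|\widehat f|^2$, this equals $(2\pi)^{-1}(2\pi)^3\|f\|_{\dot H^{-1/2}}^2=4\pi^2\|f\|_{\dot H^{-1/2}}^2$, which is \eqref{eq:Xray-Hminus-identity}. With the weight $|\xi|$, the same computation gives $(2\pi)^{-1}\int|\widehat f|^2=(2\pi)^{-1}(2\pi)^3\|f\|_{L^2}^2=4\pi^2\|f\|_{L^2}^2$ by Plancherel in $\R^3$, which is \eqref{eq:Xray-L2-filtered}.

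The only delicate point is the measure identity on the pairs $(e,\theta)$, specifically the factor $2\pi$. The rest is bookkeeping of $2\pi$ factors and finiteness. The integral $\int|\xi|^{-1}|\widehat f|^2$ converges near $0$ in three dimensions, so Tonelli justifies all interchanges.
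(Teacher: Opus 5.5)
Your proposal is correct and follows the paper's proof essentially step by step. Both combine planar Plancherel with the Fourier slice identity, then use the incidence-set identity $\int_{\Sph}\int_{e^\perp}h(\xi)\,d\xi\,de=2\pi\int_{\R^3}h(\xi)|\xi|^{-1}\,d\xi$, obtained by pairing each direction with a great circle of length $2\pi$, applied to $h=|\widehat f|^2$ and $h=|\xi||\widehat f|^2$, and your constant bookkeeping matches the paper's.
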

	
	\begin{proof}
		Plancherel's theorem on $e^\perp$ and Lemma~\ref{lem:fourier-slice} give $\int_{e^\perp}|Xf(y,e)|^2\,dy=(2\pi)^{-2}\int_{e^\perp}|\widehat f(\xi)|^2\,d\xi$. For every nonnegative integrable function $A$,
		\begin{equation}\label{eq:plane-coarea}
			\int_{\Sph}\int_{e^\perp}A(\xi)\,d\xi\,de=2\pi\int_{\R^3}\frac{A(\xi)}{|\xi|}\,d\xi.
		\end{equation}
		To verify \eqref{eq:plane-coarea}, write $\xi=ru$ in each plane $e^\perp$, where $r>0$ and $u\in\Sph\cap e^\perp$. Fubini's theorem on the incidence set $\{(e,u)\in\Sph\times\Sph:\ e\cdot u=0\}$ shows that each fixed $u\in\Sph$ is paired with the great circle $\{e\in\Sph:\ e\cdot u=0\}$, whose length is $2\pi$. The radial measure is $r\,dr$, while $d\xi=r^2\,dr\,du$ in $\R^3$. This gives \eqref{eq:plane-coarea}.
		
		Applying \eqref{eq:plane-coarea} with $A(\xi)=|\widehat f(\xi)|^2$ and using planar Plancherel gives
		\[
		\int_{\Sph}\int_{e^\perp}|Xf(y,e)|^2\,dy\,de
		=(2\pi)^{-2}2\pi\int_{\R^3}\frac{|\widehat f(\xi)|^2}{|\xi|}\,d\xi
		=4\pi^2\|f\|_{\dot H^{-1/2}(\R^3)}^2.
		\]
		For the filtered identity, the Fourier multiplier of $(-\Delta_y)^{1/4}$ on $e^\perp$ is $|\xi|^{1/2}$. 	Planar Plancherel and \eqref{eq:plane-coarea}, now applied with $A(\xi)=|\xi||\widehat f(\xi)|^2$, give
		\[
		\int_{\Sph}\int_{e^\perp}\big|(-\Delta_y)^{1/4}Xf(y,e)\big|^2\,dy\,de
		=(2\pi)^{-2}2\pi\int_{\R^3}|\widehat f(\xi)|^2\,d\xi
		=4\pi^2\|f\|_{L^2(\R^3)}^2,
		\]
		where the last equality follows from the three-dimensional Plancherel identity under the Fourier transform convention used above.
	\end{proof}
	
	\begin{corollary}[Stability from the spatial X-ray transforms]\label{cor:Xray-stability}
		Let $K_1,K_2\in\mathfrak K$, and define
		\[
		D_j(y,e;g,\omega)=X_xK_j(y,e;g,\omega), \quad j=1,2.
		\]
		Let $\widetilde K_j(x,g,\omega)$ denote the zero extension in the spatial variable. Then
		\begin{equation}\label{eq:kernel-Hminus-stability}
			\|\widetilde K_1-\widetilde K_2\|_{L^2(\mathcal C;\dot H_x^{-1/2})}
			=\frac1{2\pi}\|D_1-D_2\|_{\mathcal Y_{-1/2}(\mathcal C)},
		\end{equation}
		and
		\begin{equation}\label{eq:kernel-L2-stability}
			\|\widetilde K_1-\widetilde K_2\|_{L^2(\R^3\times\mathcal C)}
			=\frac1{2\pi}\|D_1-D_2\|_{\mathcal Y_0(\mathcal C)}.
		\end{equation}
	\end{corollary}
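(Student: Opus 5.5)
The plan is to apply Proposition~\ref{prop:Xray-Plancherel} pointwise in the collision variables and then integrate over $\mathcal C$. Fix $(g,\omega)\in\mathcal C$ and set $f_{g,\omega}=\widetilde K_1(\cdot,g,\omega)-\widetilde K_2(\cdot,g,\omega)$. Since the spatial X-ray transform is linear and computed on zero extensions, $Xf_{g,\omega}(y,e)=(D_1-D_2)(y,e;g,\omega)$ for every $e\in\Sph$ and $y\in e^\perp$. Once the two Plancherel identities are known for $f_{g,\omega}$, we integrate them over $(g,\omega)\in\mathcal C$ with $dg\,d\omega$, apply Tonelli to reorder the integrals, and take square roots. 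The factor $4\pi^2=(2\pi)^2$ then produces the constant $1/(2\pi)$ in \eqref{eq:kernel-Hminus-stability} and \eqref{eq:kernel-L2-stability}.

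The main obstacle is regularity. Proposition~\ref{prop:Xray-Plancherel} is stated for $f\in C_c^\infty(\R^3)$. Our $f_{g,\omega}$ is only bounded and compactly supported in $\ol\Omega$, and it typically jumps across $\p\Omega$. I will extend the identities by density, using the Fourier slice identity of Lemma~\ref{lem:fourier-slice}, which holds for all $f\in L^1(\R^3)$. For $f\in L^1\cap L^2$ with compact support, $\widehat f$ is continuous and in $L^2$, so $\int_{\R^3}|\xi|^{-1}|\widehat f|^2\,d\xi<\infty$: near $0$ the weight is locally integrable in three dimensions, and at infinity it is bounded by $|\widehat f|^2$. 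For each $e$, the function $Xf(\cdot,e)$ lies in $L^1(e^\perp)$ with Fourier transform $\widehat f|_{e^\perp}$. Planar Plancherel then holds whenever the right-hand side is finite, with both sides possibly infinite in the $L^2$ identity.

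The cleanest route avoids approximation altogether. Redo the proof of Proposition~\ref{prop:Xray-Plancherel} directly with $A(\xi)=|\widehat f(\xi)|^2$ or $A(\xi)=|\xi||\widehat f(\xi)|^2$ in the coarea formula \eqref{eq:plane-coarea}, which is valid for every nonnegative measurable $A$ by Tonelli. For the filtered identity, interpret $(-\Delta_y)^{1/4}Xf(\cdot,e)$ as the $L^2$ function with Fourier transform $|\xi|^{1/2}\widehat f|_{e^\perp}$. This is finite for a.e.\ $e$, because the spherical integral of these slice norms equals $2\pi\|\widehat f\|_{L^2}^2<\infty$. This gives both identities for $f_{g,\omega}$ without using smoothness.

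Finally, I will check measurability in $(g,\omega)$ and finiteness. $K_j$ is $C^1$ on $\ol\Omega\times\ol{\mathscr G^\sharp}\times\Sph$, so $f_{g,\omega}$ is jointly measurable and uniformly bounded by $2M_1$ on $\Omega$. Hence $\|f_{g,\omega}\|_{L^2}$ and $\|f_{g,\omega}\|_{\dot H^{-1/2}}$ are bounded uniformly on the compact set $\mathcal C$, and the Fourier transforms depend measurably on $(g,\omega)$. Integrating the pointwise identities over $\mathcal C$ then yields the two stated equalities, with finite values on both sides.
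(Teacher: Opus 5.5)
Your proposal is correct, but you reach the pointwise identities for $f_{g,\omega}$ by a different route than the paper.

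\textbf{The paper's route.} It keeps Proposition~\ref{prop:Xray-Plancherel} as a black box and argues by density:
\begin{itemize}
\item It approximates $f$ by $f_n\in C_c^\infty(\R^3)$ in $L^1\cap L^2$.
\item It shows $f_n\to f$ in $\dot H^{-1/2}$ via $\|h\|_{\dot H^{-1/2}}^2\le C(\|h\|_{L^1}^2+\|h\|_{L^2}^2)$.
\item It applies the identities to $f_n-f_m$ to obtain Cauchy sequences in $L^2$ of the line space.
\item It identifies the limit of $Xf_n$ with $Xf$ through the $L^1$ bound for $X$.
\item It identifies the limit of $(-\Delta_y)^{1/4}Xf_n$ with $(-\Delta_y)^{1/4}Xf$ by pairing with test functions in local charts of the line space.
\end{itemize}

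\textbf{Your route.} You rerun the proof of the proposition directly for $f$:
\begin{itemize}
\item Lemma~\ref{lem:fourier-slice} already holds for every $f\in L^1(\R^3)$.
\item The coarea identity \eqref{eq:plane-coarea} holds for every nonnegative measurable $A$ by Tonelli.
\item Planar Plancherel applies to $Xf(\cdot,e)$.
\item You define $(-\Delta_y)^{1/4}Xf(\cdot,e)$ as the $L^2$ function with Fourier transform $|\xi|^{1/2}\widehat f|_{e^\perp}$. This is finite for a.e.\ $e$ by the coarea identity, and it agrees with the paper's Fourier-multiplier definition. Its Plancherel identity is then immediate.
\end{itemize}
Your extended-sense Plancherel remark is not needed. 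Since $|f|\le 2M_1$ and $f$ is supported in $\ol\Omega$, $Xf(\cdot,e)$ is bounded and compactly supported in $e^\perp$ for every $e$, so it lies in $L^1\cap L^2$ and ordinary Plancherel applies.

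\textbf{Comparison.} Your approach removes the approximation step. It also removes the somewhat delicate identification of the $L^2$ limit of the filtered transforms, which the paper only sketches. The cost is reproving the proposition under weaker hypotheses rather than citing it. The paper's approach reuses the smooth case as stated and pays with the three limit-passage arguments. Your checks of joint measurability and the uniform bound $2M_1$ on $\mathcal C$ justify the final Tonelli integration over $dg\,d\omega$, which the paper leaves implicit. Taking square roots of $(2\pi)^2$ then gives the constant $1/(2\pi)$ in both identities.
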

	
	\begin{proof}
		Fix $(g,\omega)\in\mathcal C$ and set $f=(\widetilde K_1-\widetilde K_2)(\cdot,g,\omega)$. Then $f\in L^1(\R^3)\cap L^2(\R^3)$ and has compact support. Choose $f_n\in C_c^\infty(\R^3)$ such that $f_n\to f$ in $L^1(\R^3)\cap L^2(\R^3)$.
		
		For $h\in L^1(\R^3)\cap L^2(\R^3)$, the estimate $|\widehat h(\xi)|\le\|h\|_{L^1}$ on $|\xi|\le1$, the inequality $|\xi|^{-1}\le1$ on $|\xi|>1$, and Plancherel's theorem give
		\[
		\|h\|_{\dot H^{-1/2}(\R^3)}^2
		\le C\big(\|h\|_{L^1(\R^3)}^2+\|h\|_{L^2(\R^3)}^2\big).
		\]
		Hence, $f_n\to f$ in $\dot H^{-1/2}(\R^3)$.
		
		Proposition~\ref{prop:Xray-Plancherel}, applied to $f_n-f_m$, shows that $Xf_n$ is Cauchy in $L^2(\{(e,y):e\in\Sph,\ y\in e^\perp\})$. On the other hand,
		\[
		\int_{\Sph}\int_{e^\perp}|X(f_n-f)(y,e)|\,dy\,de
		\le4\pi\|f_n-f\|_{L^1(\R^3)},
		\]
		so $Xf_n\to Xf$ in the corresponding $L^1$ space and in distributions. Its $L^2$ limit is $Xf$. Passing to the limit in \eqref{eq:Xray-Hminus-identity} gives
		\[
		\int_{\Sph}\int_{e^\perp}|Xf(y,e)|^2\,dy\,de
		=4\pi^2\|f\|_{\dot H^{-1/2}(\R^3)}^2.
		\]
		
		Similarly, \eqref{eq:Xray-L2-filtered} shows that $(-\Delta_y)^{1/4}Xf_n$ is Cauchy in the corresponding $L^2$ space. To identify its limit, take a smooth test function compactly supported in a local chart of the line space. Its planar fractional Laplacian is bounded and decays in $y$, uniformly on the compact direction set. Since $Xf_n\to Xf$ in $L^1$ on the line space, self-adjointness of the planar Fourier multiplier gives convergence of the pairings with this test function. A partition of unity in $e$ identifies the limit with $(-\Delta_y)^{1/4}Xf$. Passing to the limit gives
		\[
		\int_{\Sph}\int_{e^\perp}|(-\Delta_y)^{1/4}Xf(y,e)|^2\,dy\,de	=4\pi^2\|f\|_{L^2(\R^3)}^2.
		\]
		Applying these identities for every $(g,\omega)\in\mathcal C$ and integrating with respect to $dg\,d\omega$ proves \eqref{eq:kernel-Hminus-stability} and \eqref{eq:kernel-L2-stability}.
	\end{proof}
	
	\para{Measurements with finite resolution} The proof of Theorem~\ref{thm:boundary-to-xray} also gives an error estimate for finite incident speed and packet width.
	
	\begin{proposition}[Error for finite incident speed and packet width]\label{prop:finite-resolution}
		Let $K\in\mathfrak K$ and fix $e\in\Sph$. Let $g_0$ range over a compact subset of $\mathscr G$, and let the supports of $\eta\in C_c^\infty(U)$ and $\varphi\in C_c^\infty(\mathcal O)$ remain in fixed compact subsets of $U$ and $\mathcal O$, respectively. After decreasing $\eps_0$ and $\delta_0$ uniformly if necessary, one has
		\begin{equation}\label{eq:finite-resolution-error}
			\begin{split}
				&\bigg|\mathscr M_K^{\eps,\delta}(e,g_0;\eta,\varphi)-\int_{e^\perp}\eta(y)\int_{\Sph}\varphi(\omega)\bigg[\int_\R K(y+\sigma e,g_0,\omega)\,d\sigma\bigg]\,d\omega\,dy\bigg|\\
				&\le C\|\eta\|_{L^1(e^\perp)}\big(\eps\|\varphi\|_{L^\infty(\Sph)}+\delta\|\varphi\|_{C^1(\Sph)}\big)
			\end{split}
		\end{equation}
		for $0<\eps\le\eps_0$ and $0<\delta\le\delta_0$. The constant depends on the fixed beam profiles, the compact support sets, the geometry, and the admissible kernel class, but not on $\eps$, $\delta$, $g_0$, $\eta$, or $\varphi$ within these families. In particular, for fixed $\eta$ and $\varphi$, the right-hand side is bounded by $C(\eps+\delta)$.
	\end{proposition}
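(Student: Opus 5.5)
We split the error into an $\eps$-part and a $\delta$-part by inserting the intermediate quantity
\[
\mathscr M^{0,\delta}:=\int_U\int_0^T\int_{W_3}(\mathcal T_{+,0}S_\delta^0)(s,y,w_3)\Phi^{e,g_0;\eta,\varphi}(s,y,w_3)\,dw_3\,ds\,dy,
\]
with $S_\delta^0=Q_{3,+}^K(U_{1,\delta}^0,U_{2,\delta}^0)$. This is the limit \eqref{eq:tomographic-proof-first-limit} obtained in the proof of Theorem~\ref{thm:boundary-to-xray}. By the triangle inequality it suffices to bound $|\mathscr M_K^{\eps,\delta}-\mathscr M^{0,\delta}|$ and $|\mathscr M^{0,\delta}-\text{RHS}|$ separately, with the stated dependence on $\eta$ and $\varphi$.

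For the $\eps$-part we apply Theorem~\ref{thm:flux-functional} on $U^\sharp$, exactly as in \eqref{eq:tomographic-first-limit-error}. This gives the bound $C\eps\|\Phi\|_{\mathfrak D(U,W_3)}$. The constant $C$ is uniform in $\delta\le\delta_0$ and in $g_0$ in a compact set. The reason is that Theorem~\ref{thm:flux-functional} states uniformity for the normalized packet families: the incoming norms are at most $2$, the $\mathfrak X_-^1$ norms are bounded by $C_{\zeta,\alpha}$, and the velocity sets $W,W_3$ and $\eps_0$ were chosen independently of $g_0\in\mathscr G_0$ in Subsection~\ref{subsec:incident-pulses}. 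We then bound the detector norm by
\[
\|\Phi\|_{\mathfrak D}\le\esssup_w\int|\eta(y)|\beta(r)|\chi_{g_0,\varphi}(w)|\,dr\,dy\le\|\eta\|_{L^1}\|\varphi\|_{L^\infty},
\]
using $\int\beta=1$ and $|\chi_{g_0,\varphi}|\le\|\varphi\|_{L^\infty}$ since $0\le\vartheta\le1$. This yields the $C\eps\|\eta\|_{L^1}\|\varphi\|_{L^\infty}$ term.

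For the $\delta$-part we reuse the exact computation leading to \eqref{eq:packet-averaged-line-integral}. That computation shows $\mathscr M^{0,\delta}=\int\eta(y)\int_\R A_\delta(y+\sigma e)\,d\sigma\,dy$, where $A_\delta$ is zero-extended outside $\ol\Omega$. The target is the same expression with $A_0$ in place of $A_\delta$. Lemma~\ref{lem:velocity-localization} gives $|A_\delta-A_0|\le C\delta\|\varphi\|_{C^1}$ uniformly in $x$ and $g_0$, after shrinking $\delta_0$ to $\delta_1$. The $\sigma$-integral runs over a set of length at most $\diam(\Omega)$, so this contributes at most $C\delta\diam(\Omega)\|\eta\|_{L^1}\|\varphi\|_{C^1}$. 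Adding the two parts gives \eqref{eq:finite-resolution-error}.

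The main point needing care is uniformity of the $\eps$-estimate in $\delta$ and $g_0$. The packets $\psi_{i,\delta}$ are not pointwise bounded as $\delta\to0$, so we must check that every constant in Theorem~\ref{thm:flux-functional} enters only through $L^1_w$ envelopes. These constants come through Lemmas~\ref{lem:free-convergence}, \ref{lem:source-regularity}, \ref{lem:coefficient-convergence} and the bound $\|\Phi\|_{\mathfrak D}$. For each of these, the controls enter only via their $L^1_w$ envelopes or $\mathfrak X_-^1$ norms, which are fixed by $\zeta$, $\alpha$ and unit mass. Also, $W_1(g_0)-W_2(g_0)$ stays in a fixed compact subset of $\mathscr G^\sharp$, so the local $C^1$ bound $M_1$ controls the kernel. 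Finally, the pairing with $\Phi$ is linear, so the error scales with $\|\Phi\|_{\mathfrak D}$ and no additional dependence on $\eta$ or $\varphi$ arises.
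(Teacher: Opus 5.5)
Your proposal is correct and follows the paper's own proof: the same intermediate quantity ($I_\delta$ in the paper), the $\eps$-part from Theorem~\ref{thm:flux-functional} with $\|\Phi\|_{\mathfrak D}\le\|\eta\|_{L^1}\|\varphi\|_{L^\infty}$, and the $\delta$-part from Lemma~\ref{lem:velocity-localization} with chord length at most $\diam(\Omega)$. Your added remarks on uniformity in $\delta$ and $g_0$ only spell out what the paper takes from the packet-independence statements of the earlier lemmas.
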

	
	\begin{proof}
		Extend $A_\delta$ and $A_0$ by zero outside $\ol\Omega$, and set $I_\delta=\int_{e^\perp}\eta(y)\int_\R A_\delta(y+\sigma e)\,d\sigma\,dy$ and $I_0=\int_{e^\perp}\eta(y)\int_\R A_0(y+\sigma e)\,d\sigma\,dy$. The calculation in the proof of Theorem~\ref{thm:boundary-to-xray}, using the weak gain formula and the changes of variables in space and time, shows that the integral in \eqref{eq:tomographic-first-limit-error} equals $I_\delta$. This integral is independent of $\eps$. Since $\|\chi_{g_0,\varphi}\|_{L^\infty}\le C\|\varphi\|_{L^\infty(\Sph)}$, that estimate gives
		\[
		|\mathscr M_K^{\eps,\delta}-I_\delta| \le C\eps\|\eta\|_{L^1(e^\perp)}\|\varphi\|_{L^\infty(\Sph)}.
		\]
		By \eqref{eq:velocity-localization-error} and $L(y,e)\le\diam(\Omega)$,
		\[
		|I_\delta-I_0| \le C\delta\|\eta\|_{L^1(e^\perp)}\|\varphi\|_{C^1(\Sph)}.
		\]
		The definition of $A_0$ shows that $I_0$ is the spatial X-ray pairing in \eqref{eq:finite-resolution-error}. Combining the two estimates proves the result.
	\end{proof}
	
	\begin{corollary}[Finite detector resolution]\label{cor:finite-detector-resolution}
		Fix $e\in\Sph$, $g_0\in\mathscr G$, $y_0\in\Omega_e$, and $\omega_0\in\mathcal O$. Choose the spatial beam profile $\zeta$ and the temporal profiles $\alpha,\beta$ once on fixed neighborhoods of this configuration, independently of $\rho$ and $\tau$. For all sufficiently small $\rho,\tau>0$, let $\eta_\rho\in C_c^\infty(\Omega_e)$ and $\varphi_\tau\in C_c^\infty(\mathcal O)$ be nonnegative and satisfy $\int_{e^\perp}\eta_\rho(y)\,dy=1$ and $\int_{\Sph}\varphi_\tau(\omega)\,d\omega=1$. Assume that, for a fixed constant $c_*>0$, one has $\supp\eta_\rho\subset B_{e^\perp}(y_0,c_*\rho)$ and $\supp\varphi_\tau\subset B_{\Sph}(\omega_0,c_*\tau)$, and that these supports remain in fixed compact subsets of $\Omega_e$ and $\mathcal O$ for all sufficiently small $\rho$ and $\tau$. Then
		\begin{equation}\label{eq:fully-resolved-error}
			\begin{split}
				\big|\mathscr M_K^{\eps,\delta}(e,g_0;\eta_\rho,\varphi_\tau)-X_xK(y_0,e;g_0,\omega_0)\big|
				\le C\big(\eps\|\varphi_\tau\|_{L^\infty(\Sph)}+\delta\|\varphi_\tau\|_{C^1(\Sph)}+\rho+\tau\big).
			\end{split}
		\end{equation}
		For a standard approximate identity on $\Sph$, the bounds $\|\varphi_\tau\|_{L^\infty}\le C\tau^{-2}$ and $\|\varphi_\tau\|_{C^1}\le C\tau^{-3}$ give
		\begin{equation}\label{eq:fully-resolved-scaled-error}
			\big|\mathscr M_K^{\eps,\delta}(e,g_0;\eta_\rho,\varphi_\tau)-X_xK(y_0,e;g_0,\omega_0)\big|
			\le C(\eps\tau^{-2}+\delta\tau^{-3}+\rho+\tau).
		\end{equation}
	\end{corollary}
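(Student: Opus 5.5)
The plan is a triangle inequality that combines Proposition~\ref{prop:finite-resolution} with the approximate-identity estimate from the proof of Corollary~\ref{cor:pointwise-line-data}, made quantitative by Lipschitz regularity of the X-ray transform $D(y,\omega)=X_xK(y,e;g_0,\omega)$. First, set
\[
I(\rho,\tau)=\int_{e^\perp}\eta_\rho(y)\int_{\Sph}\varphi_\tau(\omega)D(y,\omega)\,d\omega\,dy .
\]
Since the supports of $\eta_\rho$ and $\varphi_\tau$ lie in fixed compact subsets of $\Omega_e$ and $\mathcal O$, we can fix $U\Subset\Omega_e$ containing all of them. The profiles $\zeta,\alpha,\beta$ are chosen once for this $U$, so the constant in \eqref{eq:finite-resolution-error} is uniform in $\rho,\tau$. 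Using $\|\eta_\rho\|_{L^1}=1$, Proposition~\ref{prop:finite-resolution} gives
\[
|\mathscr M_K^{\eps,\delta}-I(\rho,\tau)|\le C\big(\eps\|\varphi_\tau\|_{L^\infty}+\delta\|\varphi_\tau\|_{C^1}\big).
\]

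Second, bound $|I(\rho,\tau)-D(y_0,\omega_0)|$. Nonnegativity and the unit normalizations give the bound $\sup|D(y,\omega)-D(y_0,\omega_0)|$ over $y\in B(y_0,c_*\rho)$ and $\omega\in B_{\Sph}(\omega_0,c_*\tau)$. It therefore suffices to show that $D$ is Lipschitz on $\ol{U'}\times\ol{\mathcal O'}$, where $U'\Subset\Omega_e$ and $\mathcal O'\Subset\mathcal O$ are the fixed compact sets containing the supports. For this, use the representation \eqref{eq:line-data-fixed-interval}:
\[
D(y,\omega)=L(y,e)\int_0^1K\big(y+[\sigma_-(y,e)+rL(y,e)]e,g_0,\omega\big)\,dr .
\]
Lemma~\ref{lem:chord-geometry} shows that $\sigma_-(\cdot,e)$ and $L(\cdot,e)$ are $C^3$ on $U'$, hence have bounded derivatives on $\ol{U'}$. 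The kernel satisfies $\|K\|_{C^1(\ol\Omega\times\ol{\mathscr G^\sharp}\times\Sph)}\le M_1$, and the point $y+[\sigma_-+rL]e$ stays in $\ol\Omega$. Differentiating under the integral sign then gives $|\nabla_yD|+|\nabla_\omega D|\le C$, with $C$ depending on $M_1$, $\diam\Omega$ and the $C^1$ norms of $\sigma_-,L$ on $\ol{U'}$. Hence $|I-D(y_0,\omega_0)|\le C(\rho+\tau)$, with geodesic distance on $\Sph$ comparable to chordal distance.

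This Lipschitz step is the main point to check. Two things need care. The $\omega$-derivative is taken on the sphere, so it needs $K\in C^1$ in $\omega$ on $\Sph$, which the class provides. The smallness conditions on $\rho,\tau$ must ensure that the balls stay inside $U'\times\mathcal O'$, which the hypotheses state. The constant $c_*$ is absorbed into $C$.

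Adding the two bounds gives \eqref{eq:fully-resolved-error}. For \eqref{eq:fully-resolved-scaled-error}, take $\varphi_\tau(\omega)=c_\tau\tau^{-2}\Theta(\dist_{\Sph}(\omega,\omega_0)/\tau)$ with a fixed bump $\Theta$ and normalizing constant $c_\tau$ bounded above and below. This choice satisfies $\|\varphi_\tau\|_{L^\infty}\le C\tau^{-2}$, and each derivative costs an additional factor $\tau^{-1}$, so $\|\varphi_\tau\|_{C^1}\le C\tau^{-3}$. Substituting these bounds into \eqref{eq:fully-resolved-error} yields the scaled estimate.
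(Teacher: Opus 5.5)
Your proposal is correct and follows essentially the same route as the paper. Proposition~\ref{prop:finite-resolution} with $\|\eta_\rho\|_{L^1}=1$ gives the $\eps$ and $\delta$ terms. The $C^1$ regularity of $(y,\omega)\mapsto X_xK(y,e;g_0,\omega)$ near $(y_0,\omega_0)$, which follows from Lemma~\ref{lem:chord-geometry} and the $C^1$ bound on $K$, combined with the nonnegative normalized tests gives the $\rho+\tau$ term, and bump-function scaling on $\Sph$ gives \eqref{eq:fully-resolved-scaled-error}. Differentiating \eqref{eq:line-data-fixed-interval} and choosing small ball neighborhoods so the mean value theorem applies only makes explicit what the paper states briefly; in your bump construction, take $\Theta$ constant near $0$ so that $\Theta(\dist_{\Sph}(\cdot,\omega_0)/\tau)$ is smooth at $\omega_0$.
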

	
	\begin{proof}
		Proposition~\ref{prop:finite-resolution} gives the first two terms on the right-hand side of \eqref{eq:fully-resolved-error}. Lemma~\ref{lem:chord-geometry} and the $C^1$ regularity of $K$ on $\ol\Omega\times\ol{\mathscr G^\sharp}\times\Sph$ imply that $(y,\omega)\mapsto X_xK(y,e;g_0,\omega)$ is $C^1$ on a fixed compact neighborhood of $(y_0,\omega_0)$. The support and normalization assumptions give
		\[
		\bigg|\int_{e^\perp}\eta_\rho(y)\int_{\Sph}\varphi_\tau(\omega)X_xK(y,e;g_0,\omega)\,d\omega\,dy-X_xK(y_0,e;g_0,\omega_0)\bigg|
		\le C(\rho+\tau).
		\]
		Combining these estimates proves \eqref{eq:fully-resolved-error}. The standard scaling of a normalized bump function on a two-dimensional manifold gives $\|\varphi_\tau\|_{L^\infty}\le C\tau^{-2}$ and $\|\varphi_\tau\|_{C^1}\le C\tau^{-3}$, which yields \eqref{eq:fully-resolved-scaled-error}.
	\end{proof}
	
	For fixed beam profiles, \eqref{eq:fully-resolved-scaled-error} gives pointwise recovery of the line integrals whenever $\rho,\tau\to0$, $\eps\tau^{-2}\to0$, and $\delta\tau^{-3}\to0$. We next allow an error in the measured flux before normalization.
	
	\begin{corollary}[Error in the measured product flux]\label{cor:raw-flux-noise}
		Fix a configuration as in Corollary~\ref{cor:finite-detector-resolution}. Use standard nonnegative approximate identities with $\|\eta_\rho\|_{L^\infty}\le C\rho^{-2}$, $\|\varphi_\tau\|_{L^\infty}\le C\tau^{-2}$, and $\|\varphi_\tau\|_{C^1}\le C\tau^{-3}$. Keep the beam profiles fixed as in that corollary. For $R=\eps^{-1}$, let $J_{3,R}$ be the exact flux for the chosen packet width $\delta$, and suppose the observed flux $J_{3,R}^{\rm obs}$ satisfies
		\begin{equation}\label{eq:raw-flux-error-assumption}
			\int_0^{T/R}\int_{\Gamma_+}|J_{3,R}^{\rm obs}(t,x,v)-J_{3,R}(t,x,v)|\,dS_x\,dv\,dt\le\nu_{\rm obs}.
		\end{equation}
		It suffices to impose this bound on the support of the detector. Define $\mathscr M_{\rm obs}^{\eps,\delta}$ by \eqref{eq:physical-scaled-measurement}, with $J_{3,R}^{\rm obs}$ in place of $J_{3,R}$. Then
		\begin{equation}\label{eq:raw-flux-line-error}
			\begin{split}
				|\mathscr M_{\rm obs}^{\eps,\delta}(e,g_0;\eta_\rho,\varphi_\tau)-X_xK(y_0,e;g_0,\omega_0)|
				\le C\big(\eps\tau^{-2}+\delta\tau^{-3}+\rho+\tau+\nu_{\rm obs}\eps^{-1}\rho^{-2}\tau^{-2}\big).
			\end{split}
		\end{equation}
		The constant is independent of the four resolution parameters and of $\nu_{\rm obs}$, for the fixed configuration and admissible kernel class. If observations with the bound \eqref{eq:raw-flux-error-assumption} are available at $\rho=\tau=\nu_{\rm obs}^{1/8}$, $\eps=\nu_{\rm obs}^{3/8}$, and $\delta=\nu_{\rm obs}^{1/2}$, then, for sufficiently small $\nu_{\rm obs}>0$,
		\begin{equation}\label{eq:raw-flux-holder-rate}
			|\mathscr M_{\rm obs}^{\eps,\delta}(e,g_0;\eta_\rho,\varphi_\tau)-X_xK(y_0,e;g_0,\omega_0)|\le C\nu_{\rm obs}^{1/8}.
		\end{equation}
	\end{corollary}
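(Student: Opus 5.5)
The plan is to split the observed measurement into the exact measurement plus a noise term, and to treat the two separately:
\[
\mathscr M_{\rm obs}^{\eps,\delta}-X_xK(y_0,e;g_0,\omega_0)=\big(\mathscr M_{\rm obs}^{\eps,\delta}-\mathscr M_K^{\eps,\delta}\big)+\big(\mathscr M_K^{\eps,\delta}-X_xK(y_0,e;g_0,\omega_0)\big).
\]

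The second bracket is already controlled by \eqref{eq:fully-resolved-scaled-error} in Corollary~\ref{cor:finite-detector-resolution}. That estimate gives exactly $C(\eps\tau^{-2}+\delta\tau^{-3}+\rho+\tau)$ for the approximate identities assumed here, with the beam profiles fixed.

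The first bracket is handled through the physical representation \eqref{eq:physical-scaled-measurement}. The measurement $\mathscr M^{\eps,\delta}$ is $R$ times the pairing of the flux with $\Psi_\eps^{e,g_0;\eta_\rho,\varphi_\tau}$. This expression is linear in $J_{3,R}$, so
\[
|\mathscr M_{\rm obs}^{\eps,\delta}-\mathscr M_K^{\eps,\delta}|\le R\,\|\Psi_\eps^{e,g_0;\eta_\rho,\varphi_\tau}\|_{L^\infty}\int_0^{T/R}\int_{\Gamma_+}|J_{3,R}^{\rm obs}-J_{3,R}|\,dS_x\,dv\,dt.
\]
By \eqref{eq:raw-flux-error-assumption}, the integral is at most $\nu_{\rm obs}$. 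It only needs to be taken on the support of $\Psi$, which justifies the remark in the statement.

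By \eqref{eq:physical-restricted-detector} and \eqref{eq:scaled-detector-profile}, $\Psi$ is a composition of $\Phi=\eta_\rho(y)\beta(s-s_0-L(y,e))\chi_{g_0,\varphi_\tau}(w)$ with coordinate maps. Hence
\[
\|\Psi\|_{L^\infty}\le\|\eta_\rho\|_{L^\infty}\|\beta\|_{L^\infty}\|\chi_{g_0,\varphi_\tau}\|_{L^\infty}\le C\rho^{-2}\tau^{-2}.
\]
The bound on $\chi$ holds because $|\vartheta|\le1$, so $\|\chi_{g_0,\varphi_\tau}\|_{L^\infty}\le\|\varphi_\tau\|_{L^\infty}$, and $\beta$ is fixed. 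Since $R=\eps^{-1}$, the noise term is at most $C\nu_{\rm obs}\eps^{-1}\rho^{-2}\tau^{-2}$, which is the last term in \eqref{eq:raw-flux-line-error}.

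The main point to check is that the constant in Corollary~\ref{cor:finite-detector-resolution} does not depend on $\rho$ or $\tau$. This holds because the supports of $\eta_\rho$ and $\varphi_\tau$ stay in fixed compact sets, so the same $U$, $W_3$, $\eps_0$ and $\delta_0$ can be used throughout.

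Finally, the parameter choice is substituted into \eqref{eq:raw-flux-line-error}. With $\rho=\tau=\nu^{1/8}$, $\eps=\nu^{3/8}$ and $\delta=\nu^{1/2}$, where $\nu=\nu_{\rm obs}$, the five terms become:
\[
\eps\tau^{-2}=\nu^{3/8-2/8}=\nu^{1/8},\qquad \delta\tau^{-3}=\nu^{1/2-3/8}=\nu^{1/8},\qquad \rho+\tau=2\nu^{1/8},
\]
\[
\nu\,\eps^{-1}\rho^{-2}\tau^{-2}=\nu^{1-3/8-4/8}=\nu^{1/8}.
\]
For $\nu$ sufficiently small, $\eps\le\eps_0$, $\delta\le\delta_0$, and $\rho,\tau$ are small enough for the corollary to apply. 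This gives \eqref{eq:raw-flux-holder-rate}.
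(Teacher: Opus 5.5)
Your proposal is correct and follows essentially the same argument as the paper's proof. You split off the noise term by linearity of the physical pairing, bound it by $R\,\nu_{\rm obs}\|\Psi_\eps^{e,g_0;\eta_\rho,\varphi_\tau}\|_{L^\infty}\le C\nu_{\rm obs}\eps^{-1}\rho^{-2}\tau^{-2}$, and combine this with \eqref{eq:fully-resolved-scaled-error}. Your explicit exponent check and your remark that the constant is uniform in $\rho,\tau$ spell out steps the paper states in a single line.
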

	
	\begin{proof}
		The detector in \eqref{eq:physical-restricted-detector} satisfies $\|\Psi_\eps^{e,g_0;\eta_\rho,\varphi_\tau}\|_{L^\infty}\le C\rho^{-2}\tau^{-2}$, because $\beta$ and the radial cutoff are fixed. The normalization of the physical measurement gives
		\[
		|\mathscr M_{\rm obs}^{\eps,\delta}-\mathscr M_K^{\eps,\delta}|
		\le R\|J_{3,R}^{\rm obs}-J_{3,R}\|_{L^1}\|\Psi_\eps^{e,g_0;\eta_\rho,\varphi_\tau}\|_{L^\infty}
		\le C\nu_{\rm obs}\eps^{-1}\rho^{-2}\tau^{-2}.
		\]
		Combining this with \eqref{eq:fully-resolved-scaled-error} proves \eqref{eq:raw-flux-line-error}. Each term has order $\nu_{\rm obs}^{1/8}$ for the stated choice of parameters.
	\end{proof}
	
	Under \eqref{eq:raw-flux-error-assumption}, the last estimate controls the error in the integral over a fixed chord that meets the boundary transversally. The exponent has not been shown to be optimal. An estimate uniform up to grazing chords, or an estimate for the full kernel in terms of errors in the boundary measurements, requires further analysis.
	
	\subsection{Physical interpretation and scope}\label{subsec:physical-scope}
	
	For the homogeneous differential cross section introduced in the Introduction, the density $b_{12\to34}$ satisfies
	\begin{equation}\label{eq:kernel-cross-section}
		K(g,\omega)=|g|b_{12\to34}(g,\omega).
	\end{equation}
	Let $R\to\infty$ and then $\delta\to0$. For the chord parametrized by $(y,e)$, the resulting product measure is $L(y,e)K(g_0,\omega)\,d\omega$. By \eqref{eq:reaction-sphere-change-of-measure}, its density on $\Sigma_3(g_0)$ is
	\begin{equation}\label{eq:surface-product-density}
		\frac{L(y,e)|g_0|}{\lambda_3(g_0)^2}b_{12\to34}\Big(g_0,\frac{z}{\lambda_3(g_0)}\Big),\quad z\in\Sigma_3(g_0).
	\end{equation}
	All factors outside $b_{12\to34}$ are known. Measurements of product velocities are used to determine differential cross sections in \cite{MichaelsenEtAl2017,VogelsEtAl2014}. Here we prescribe two synchronized incoming packets whose common velocity tends to infinity, together with tests in boundary position and time. For an inhomogeneous kernel, these measurements give the spatial X-ray transform.
	
	\para{A charge transfer channel} The reaction $\mathrm{Ar}^++\mathrm{H}_2\to\mathrm{Ar}+\mathrm{H}_2^+$ studied in \cite{MichaelsenEtAl2017} is an example in which the velocities of product ions are measured. To describe it by a model with four components, one must specify the incoming and product states and include their internal energies in $E_i$. If species $3$ is the measured ion, the products are ordered as $(\mathrm{H}_2^+,\mathrm{Ar})$. The sign of $\Delta E$ must be checked for the selected states. An exothermic forward channel does not satisfy $\Delta E\ge0$ with that ordering; interchanging the two channel pairs also changes which species must be injected. This experiment illustrates the detection geometry. Applying the theorem requires selecting a resolved channel and checking its hypotheses.
	
	\para{Homogeneous differential collision laws}
	\begin{corollary}[Homogeneous kernel]\label{cor:homogeneous}
		If $K\in\mathfrak K$ is independent of $x$, then any nondegenerate chord gives $K(g,\omega)=\frac{X_xK(y,e;g,\omega)}{L(y,e)}$. The measurements determine the differential collision law on $\mathscr G\times\mathcal O$ without a spatial inversion.
	\end{corollary}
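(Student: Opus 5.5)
The corollary should follow directly from the X-ray identity once the spatial variable is absent.

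Fix $(g,\omega)\in\mathscr G\times\mathcal O$, a direction $e\in\Sph$ and a point $y\in\Omega_e$. By Theorem~\ref{thm:main}, or more precisely by Theorem~\ref{thm:boundary-to-xray} together with Corollary~\ref{cor:pointwise-line-data}, the measurements determine $X_xK(y,e;g,\omega)$.

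If $K$ is independent of $x$, the zero extension is $\widetilde K_{g,\omega}=K(g,\omega)\mathbf 1_\Omega$. By \eqref{eq:chord}, the line $y+\R e$ meets $\Omega$ exactly in the open interval $\sigma\in(\sigma_-(y,e),\sigma_+(y,e))$. Hence the line integral in \eqref{eq:intro-line-data} is
\[
X_xK(y,e;g,\omega)=K(g,\omega)\,L(y,e).
\]
For $y\in\Omega_e$ the interval $I(y,e)$ is a nonempty open interval, so $L(y,e)>0$; this is what "nondegenerate chord" means. Dividing by $L(y,e)$ gives the stated formula.

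The chord length $L(y,e)$ depends only on the known geometry of $\Omega$, so the quotient is computable from the data. Since $K$ is constant in $x$, no Fourier inversion (Corollary~\ref{cor:reconstruction}) is needed.

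If one prefers to avoid the pointwise limit, one can instead use \eqref{eq:intro-tested-xray} with a fixed $\eta\ge0$, $\int\eta>0$, supported in $U\Subset\Omega_e$. This gives
\[
\int_{\Sph}\varphi(\omega)K(g_0,\omega)\,d\omega=\frac{\lim\mathscr M}{\int\eta(y)L(y,e)\,dy}.
\]
Continuity of $K$ in $\omega$ on $\ol{\mathscr G^\sharp}\times\Sph$ then recovers $K(g_0,\omega)$ pointwise on $\mathcal O$.

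There is no real obstacle here. The only points to check are that the chord is nondegenerate, so $L>0$, and that the measurements were already shown to determine $X_xK$ at every $y\in\Omega_e$.
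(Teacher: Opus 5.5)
Your proposal is correct and follows the argument the paper leaves implicit (it gives no separate proof). You take the line integrals $X_xK(y,e;g,\omega)$ already determined by Theorem~\ref{thm:boundary-to-xray} and Corollary~\ref{cor:pointwise-line-data}, use $x$-independence to get $X_xK(y,e;g,\omega)=L(y,e)K(g,\omega)$, and divide by the chord length, which is known and positive for $y\in\Omega_e$; your averaged variant with a fixed $\eta$ is an equally valid way to avoid the pointwise limit.
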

	By \eqref{eq:kernel-cross-section}, division by $|g|>g_{\rm th}$ recovers $b_{12\to34}$ on $\mathscr G\times\mathcal O$.
	
	\para{Spatially modulated collision laws}
	\begin{corollary}[Recovery of a spatial parameter]\label{cor:field}
		Let $\mathcal R$ be a known response law and assume that $K(x,g,\omega)=\mathcal R(g,\omega;\Theta(x))$ belongs to $\mathfrak K$. If, for one accessible pair $(g_*,\omega_*)$, the map $\theta\mapsto\mathcal R(g_*,\omega_*;\theta)$ is injective on the admissible range of $\Theta$, then the measurements determine $\Theta$ in $\Omega$.
	\end{corollary}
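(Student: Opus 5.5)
The plan is to reduce Corollary~\ref{cor:field} to Theorem~\ref{thm:main} (equivalently, to the reconstruction in Corollary~\ref{cor:reconstruction}) and then invert the known response law pointwise. First, since $K(x,g,\omega)=\mathcal R(g,\omega;\Theta(x))$ belongs to $\mathfrak K$, Theorem~\ref{thm:main} applies. The restricted measurements therefore determine $K$ on $\Omega\times\mathscr G\times\mathcal O$. Concretely, for the accessible pair $(g_*,\omega_*)\in\mathscr G\times\mathcal O$, Corollary~\ref{cor:pointwise-line-data} gives $X_xK(y,e;g_*,\omega_*)$ for every $e\in\Sph$ and $y\in\Omega_e$. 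The Fourier slice formula \eqref{eq:fourier-reconstruction} then gives $\widehat{\widetilde K}_{g_*,\omega_*}$, and \eqref{eq:inverse-fourier} returns the value $k_*(x):=K(x,g_*,\omega_*)$ for every $x\in\Omega$.

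For uniqueness in the form of the theorem, I would argue as follows. Take two parameter fields $\Theta_1,\Theta_2$ whose kernels lie in $\mathfrak K$ and produce equal measurements. Theorem~\ref{thm:main} yields $\mathcal R(g_*,\omega_*;\Theta_1(x))=\mathcal R(g_*,\omega_*;\Theta_2(x))$ for every $x\in\Omega$. By hypothesis, both $\Theta_1(x)$ and $\Theta_2(x)$ lie in the admissible range, and $\theta\mapsto\mathcal R(g_*,\omega_*;\theta)$ is injective on that range. Hence $\Theta_1(x)=\Theta_2(x)$ pointwise. Constructively, $\Theta(x)=\mathcal R(g_*,\omega_*;\cdot)^{-1}(k_*(x))$, where the inverse is taken on the image of the admissible range.

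There is no analytic obstacle here; the one point to check is that the recovery of $K$ is pointwise, not merely almost everywhere. This is exactly what the continuity statement in Theorem~\ref{thm:main} provides, since $K$ is $C^1$ on $\ol\Omega\times\ol{\mathscr G^\sharp}\times\Sph$. A single accessible pair suffices, so no further use of the angular or relative-velocity variables is needed.
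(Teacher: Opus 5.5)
Your proposal is correct and follows the paper's own proof: Theorem~\ref{thm:main} determines $\mathcal R(g_*,\omega_*;\Theta(x))=K(x,g_*,\omega_*)$ pointwise for every $x\in\Omega$, and injectivity of $\theta\mapsto\mathcal R(g_*,\omega_*;\theta)$ on the admissible range then determines $\Theta(x)$. Your added constructive route through Corollary~\ref{cor:pointwise-line-data} and the Fourier reconstruction in Corollary~\ref{cor:reconstruction} is consistent with the reconstruction part of Theorem~\ref{thm:main}.
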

	\begin{proof}
		Theorem~\ref{thm:main} determines $\mathcal R(g_*,\omega_*;\Theta(x))$ for every $x\in\Omega$. Injectivity determines $\Theta(x)$.
	\end{proof}
	Control of reactive or inelastic rates by external fields is studied in \cite{MatsudaEtAl2020,ParazzoliEtAl2011}. The corollary applies to an effective local response law satisfying the collision assumptions of this paper. Since the transport equation has no force term, it does not cover a field that changes the leading trajectories.
	
	\para{Size of the measurements} For fixed packet width, the recentered incoming profile is independent of $R$. Its instantaneous particle flux is of order $R$ and its duration is of order $R^{-1}$, so the incident fluence is of order one. The product yield is of order $R^{-1}$. This explains both the normalization in \eqref{eq:physical-detector-identity} and the amplification of absolute count errors in \eqref{eq:raw-flux-line-error}.
	
	The reconstruction uses a family of measurements over the prescribed propagation directions, relative velocities, spatial apertures, and angular filters, together with the stated resolution limits. This family is not a finite set of scalar measurements. The temporal test collects contributions from reactions along the same chord, while the boundary and velocity tests select the chord and collision direction. The theorem concerns one resolved reaction channel. If several channels create the measured species from the same injected pair and their product signals cannot be separated, the leading measurement contains their sum. Recovery of the individual kernels then requires additional state resolution or injectivity of the map from these kernels to the combined product velocity data.

	\section*{Statements and declarations}
	
	\para{Data availability statement} No datasets were generated or analyzed in this study.
	
	\para{Conflict of interest} The authors declare no conflict of interest.
	
	\para{Acknowledgments} The first author is partially supported by the National Science and Technology Council (NSTC), Taiwan, under the project 113-2115-M-A49-017-MY3. The first author also acknowledges financial support from the Alexander von Humboldt Foundation through the Henriette Herz Scouting Programme, hosted by Universit\"at Duisburg-Essen, Germany. The second author was supported by the Hong Kong RGC General Research Funds (projects 11311122, 11304224 and 11303125). The author acknowledges using AI tools. All mathematical arguments and proofs in the final manuscript were checked and written by the author.
	
	\bibliographystyle{plain}
	\bibliography{refs}
	
\end{document}